\tikzset{anchorbase/.style={baseline={([yshift=-0.5ex]current bounding box.center)}}}
\tikzstyle directed=[postaction={decorate,decoration={markings,
    mark=at position #1 with {\arrow{>}}}}]
\tikzstyle rdirected=[postaction={decorate,decoration={markings,
    mark=at position #1 with {\arrow{<}}}}]
 \newlength{\baseunit}               
\newtheorem{thm}{Theorem}
\newtheorem{theorem}[subsubsection]{Theorem}
\newtheorem{lemma}[theorem]{Lemma}
\newtheorem{prop}[theorem]{Proposition}
\newtheorem{conjecture}[theorem]{Conjecture}
\theoremstyle{definition}
\newtheorem{remark}[theorem]{Remark}
\newtheorem{example}[subsubsection]{Example}
\newtheorem{question}[theorem]{Question}
\newcommand{\bk}{\mathbf{k}}
\newcommand{\cA}{\mathcal{A}}
\newcommand{\cB}{\mathcal{B}}
\newcommand{\cC}{\mathcal{C}}
\newcommand{\cD}{\mathcal{D}}
\newcommand{\C}{\mathcal{C}}
\numberwithin{equation}{section}
\newcommand{\ev}{\mathrm{ev}}
\newcommand{\coev}{\mathrm{coev}}
\newcommand{\im}{\mathrm{im}}
\newcommand{\Dim}{\mathrm{Dim}}
\newcommand{\ch}{\mathrm{ch}}
\newcommand{\charr}{\mathrm{char}}
\newcommand{\Ver}{\mathsf{Ver}}
\newcommand{\Rep}{\mathsf{Rep}}
\newcommand{\unit}{{\mathbbm{1}}}
\newcommand{\tto}{\twoheadrightarrow}
\newcommand{\cO}{\mathcal{O}}
\newcommand{\mN}{\mathbb{N}}
\newcommand{\mZ}{\mathbb{Z}}
\newcommand{\fg}{\mathfrak{g}}
\newcommand{\End}{\mathrm{End}}
\newcommand{\Hom}{\mathrm{Hom}}
\newcommand{\Sym}{\mathrm{Sym}}
\newcommand{\id}{\mathrm{id}}
\newcommand{\Ind}{\mathrm{Ind}}
\newcommand{\Vecc}{\mathsf{Vec}}
\newcommand{\sVecc}{\mathsf{sVec}}
\newcommand{\sVec}{\mathsf{sVec}}
\newcommand{\Tilt}{\mathsf{Tilt}}
\newcommand{\mF}{\mathbb{F}}
\newcommand{\Tr}{\mathrm{Tr}}
\newcommand{\ad}{\mathrm{ad}}
\newcommand{\gr}{\mathrm{gr}}
\newcommand{\GL}{\mathrm{GL}}
\newcommand{\PGL}{\mathrm{PGL}}
\newcommand{\SL}{\mathrm{SL}}
\newcommand{\PO}{\mathrm{PO}}
\newcommand{\SO}{\mathrm{SO}}
\newcommand{\Spin}{\mathrm{Spin}}
\newcommand{\PSO}{\mathrm{PSO}}
\newcommand{\Sp}{\mathrm{Sp}}
\newcommand{\SOSp}{\mathrm{SOSp}}
\begin{document}
\title{Finite symmetric algebras in tensor categories and Verlinde categories of algebraic groups}

\author{Kevin Coulembier}
\address{School of Mathematics and Statistics, University of Sydney, Australia}
\email{kevin.coulembier@sydney.edu.au}
\author{Pavel Etingof}
\address{Department of Mathematics, MIT, Cambridge, MA USA 02139
}\email{etingof@math.mit.edu}
\author{Joseph Newton}
\address{School of Mathematics and Statistics, University of Sydney, Australia}
\email{j.newton@maths.usyd.edu.au}

\date{\today}

\keywords{}

\setcounter{tocdepth}{2}

\begin{abstract}
We investigate objects in symmetric tensor categories that have simultaneously finite symmetric and finite exterior algebra. This forces the characteristic of the base field to be $p>0$, and the maximal degree of non-vanishing symmetric and exterior powers to add up to a multiple of $p$. We give a complete classification of objects in tensor categories for which this sum equals $p$. All resulting tensor categories are Verlinde categories of reductive groups and we fill in some gaps in the literature on these categories. 
\end{abstract}

\maketitle

\tableofcontents

\section*{Introduction}

In recent years, considerable progress has been made in the study of (symmetric) tensor categories of moderate growth over fields of positive characteristic, see \cite{BE1, BE, BEO, CE, CEO1, CEO3, Co, Et, EO, Os, Ve1} and references therein. In characteristic zero, the theory is well-established by \cite{Del01, Del02}. In characteristic zero, it is also clear that a non-zero object cannot both have a vanishing symmetric and a vanishing exterior power. In contrast, such objects seem to play an essential role in positive characteristic. 

Concretely, by \cite{CEO3}, the structure theory of tensor categories can be reduced to the classification of `incompressible' tensor categories and the question of which tensor categories fibre over which incompresible categories. Moreover, the combination of \cite[Conjecture~1.4]{BEO} and \cite[Conjecture~3.2.3]{ComAlg} predicts that any non-invertible simple object in an incompressible tensor category has finite symmetric and exterior algebras. Additionally, it was proved in \cite{CE} that, under mild assumptions, an object $X$ with vanishing third exterior power has vanishing symmetric power in degree $N-1$ (and not below) if and only if the endofunctor $-\otimes X$ is $N$-spherical in the sense of \cite{DKS}.

Motivated by these considerations, we initiate a systematic study of objects in tensor categories with finite symmetric and finite exterior algebra. In particular, we focus on the extremal case where the algebras are as small as theoretically possible (the highest non-zero degrees add up to $p$). Our main result is the following theorem concerning tensor categories over an algebraically closed field $\bk$ of characteristic $p>0$:

\begin{thm}\label{Thmmnp}
Let $\cC$ be a tensor category over $\bk$ generated by a non-invertible object $X\in\cC$, and suppose there are integers $m,n\geq2$ with $m+n=p\geq5$, $\Lambda^nX\not=0\not=\Sym^mX$, and $\Lambda^{n+1}X=0=\Sym^{m+1}X$. Then $\cC$ is semisimple, and $\cC$ and $X$ are as follows:
\begin{enumerate}
\item If $\Lambda^nX\cong\unit\cong\Sym^mX$ then $n$ is odd, and we have one of the following cases:
\begin{enumerate}
\item $\cC\simeq\Ver_p(\PGL_n)\simeq\Ver_p^+(\SL_n)$;
\item $\cC\simeq\Ver_p(\SO_n)\simeq\Ver_p^+(\Spin_n)$;
\item $\cC\simeq\Ver_p^+\simeq\Ver_p(\PGL_2)$;
\item $n=7$, $p\geq13$ and $\cC\simeq\Ver_p(G_2)$;
\item $p=2n+1$ or $p=2n-1$, and $\cC\simeq\Ver_p(\PSO_{p-1})\simeq\Ver_p^+(\Spin_{p-1})$;
\item $n=13$, $p=23$ and $\cC\simeq\Ver_{23}^+(E_7)$.
\end{enumerate}
The images of $X$ under each of the equivalences above are listed in Theorem~\ref{Thmmnp1}.
\item More generally, we have $\cC\simeq\cC^+\boxtimes\cC_0$ where $\cC^+$ is a subcategory of $\cC$ equivalent to one of the categories above with generator $X^+$, $\cC_0$ is a pointed semisimple subcategory generated by an invertible object $X_0$, and $X\cong X^+\otimes X_0$.
\end{enumerate}
\end{thm}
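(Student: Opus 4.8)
The plan is to reduce everything to the rigid numerical situation, deduce semisimplicity, then pin down the combinatorics of the generating object and match it against a known list of fusion categories. First I would set up the basic dimension count: since $\Lambda^{n+1}X = 0 = \Sym^{m+1}X$ with $m+n=p$, the object $X$ behaves (Frobenius-dimension-wise and in the symmetric-sequence sense) like an object of categorical dimension $n-m$ (or its negative), with the vanishing in degrees $n+1$ and $m+1$ being as early as is numerically allowed. I would first establish that $X$ must be semisimple and that $\cC$ is semisimple. The expected mechanism: the existence of nonzero top symmetric and exterior powers forces $\dim \End(X^{\otimes k})$ to be controlled, and a non-semisimple $\cC$ would produce, via the $p$-adic/Verlinde-category machinery of \cite{BEO, CEO1, CEO3}, an object whose exterior or symmetric algebra is strictly larger than allowed; alternatively one invokes the classification of incompressible categories and the fact that finite exterior and symmetric algebra force $X$ into the semisimple Verlinde category $\Ver_p$ (or rather the reductive-group Verlinde categories $\Ver_p(G)$).

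The heart of the argument is the reduction of part (2) to part (1). Given $\cC$ semisimple and generated by $X$, I would show that $\unit$ appears inside $X\otimes X^*$ exactly once (rigidity/simplicity of $X$ after the semisimplicity step), and consider $X_0 := \Lambda^n X$, the top exterior power, which is invertible because $\Lambda^{n+1}X = 0$ forces $\Lambda^n X$ to be a line object (this is the standard fact that the top nonvanishing exterior power of an object with $\Lambda^{n+1}=0$ is invertible, valid once $X$ is semisimple — compare the analogous statement in \cite{CE}). Then $X^+ := X \otimes X_0^{*}$ has $\Lambda^n X^+ \cong \unit$, and one checks $\Sym^m X^+$ is likewise invertible; a second twist, or a parity computation (using that $n$ is forced odd, which itself comes from $\Lambda^n X\cong\unit$ together with the symmetry of the braiding on $\Lambda^n X$), arranges $\Sym^m X^+\cong\unit$ as well. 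Setting $\cC^+ := \langle X^+\rangle$ and $\cC_0 := \langle X_0\rangle$, the subcategory $\cC_0$ is pointed, and because $X = X^+\otimes X_0$ generates $\cC$ while $X^+$ and $X_0$ generate $\cC^+$ and $\cC_0$ respectively, one gets $\cC \simeq \cC^+\boxtimes\cC_0$ — here one must check the two subcategories centralise each other and intersect trivially, which follows from $\cC^+$ being non-pointed (its generator $X^+$ is non-invertible) and $\cC_0$ pointed, so their Müger centralisers are as required and the Deligne product decomposition holds. This moves us into the situation of part (1), whose proof is the separate Theorem~\ref{Thmmnp1} that enumerates the six cases by identifying the fusion rules of $\langle X^+\rangle$ with those of the small quantum/Verlinde categories of $\PGL_n$, $\SO_n$, $G_2$, $E_7$, etc.

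For part (1) itself: having $\Lambda^n X\cong\unit\cong\Sym^m X$ means $X$ is a self-dual (up to the sign of the braiding) simple object generating a fusion category in which the ``determinant'' and ``top symmetric power'' are trivial; the fusion subcategory generated by $X$ then has a small, explicitly constrained fusion graph, and bounding the number of simple objects (via the condition $m+n=p$ and the vanishing degrees) limits us to finitely many possibilities, which one recognises as $\Ver_p(G)$ for $G$ in the stated list by comparing with the tilting-module description of these categories (the ``$+$'' versions being the subcategories generated by the relevant fundamental representation). Checking $n$ odd: if $n$ were even, the braiding on the line $\Lambda^n X\cong\unit$ would be $-1$, contradicting that it is the monoidal unit — this is the standard parity obstruction. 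The main obstacle, I expect, is \textbf{the semisimplicity step and the exclusion of non-semisimple $\cC$}: one must rule out, for every prime $p\geq 5$, the existence of a non-semisimple tensor category with an object having these extremal exterior/symmetric truncations, which requires either a delicate induction on the length of $\cC$ using the reduction-to-incompressible-categories formalism of \cite{CEO3}, or an explicit cohomological argument showing that a nontrivial extension of simples would push $\Sym^{m+1}$ or $\Lambda^{n+1}$ to be nonzero. A secondary difficulty is the final recognition step — matching abstract fusion data to the named Verlinde categories $\Ver_p(G)$ — which is where the paper's promised ``filling in gaps in the literature'' is presumably needed, and which is deferred to Theorem~\ref{Thmmnp1}.
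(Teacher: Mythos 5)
There is a genuine gap: the engine of the paper's proof is missing from your proposal. The step you flag as ``the main obstacle'' --- semisimplicity --- is resolved in the paper not by a cohomological or incompressibility argument but by Schur--Weyl duality: since $\Lambda^{n+1}X=0$ with $n\le p-2$, the universal property of the oriented Brauer category gives an additive symmetric monoidal functor $\Tilt\GL_n\to\cC$ sending the natural module $V$ to $X$, and since $\Sym^{m+1}X=0$ kills the generator $\Sym^{p-n+1}V$ of the negligible ideal, this descends to a surjective tensor functor $H:\Ver_p(\GL_n)\to\cC$ (Lemma~\ref{LemSchurWeyl}). Semisimplicity of $\cC$ is then immediate, and the same functor supplies everything else: the decomposition $\Ver_p(\GL_n)\simeq\Ver_p(\PGL_n)\boxtimes\sVec_{\mZ,z}$ (Lemma~\ref{LemGLn}) yields $X^+=H(V^+)$ and $X_0=H(V_0)$ for part (2). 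Your proposed twist $X_0:=\Lambda^nX$, $X^+:=X\otimes X_0^*$ does not work: for an even invertible $M$ one has $\Lambda^n(X\otimes M)\cong\Lambda^nX\otimes M^{\otimes n}$, so trivialising the top exterior power requires an $n$-th root of $(\Lambda^nX)^*$, which is exactly what $H(V_0)$ provides and which need not be constructible from $\Lambda^nX$ alone. Your parity argument is also incorrect: by Lemma~\ref{LemEven} the invertible $\Lambda^{n}X$ is automatically even for $2\le n\le p-2$, so its self-braiding cannot detect the parity of $n$; in the paper, $n$ odd follows instead from $X_0^{\otimes n}\cong\unit\cong X_0^{\otimes(p-n)}$ together with $\gcd(n,p-n)=1$ forcing $X_0\cong\unit$, combined with the fact that $V_0$ is even if and only if $n$ is odd.

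For part (1), ``recognising the fusion graph'' is not an argument and is not how the paper proceeds. The functor $H$ restricts to $\Ver_p^+(\SL_n)\to\cC\to\Ver_p^+$, and $\Ver_p^+(\SL_n)\simeq\Rep(\mathfrak{sl}(L_{n-1}),\phi)$ by Theorem~\ref{ThmVerDecomp}; by the reconstruction theory of \cite{CEO3} and \cite{Ve1}, the possible intermediate categories $\cC$ correspond to Lie subalgebras of $\mathfrak{sl}(L_{n-1})$ in $\Ver_p^+$ containing the principal $L_2$. The real content is then the explicit classification of these subalgebras (Proposition~\ref{PropSubalgebras}), carried out via $6j$-symbol computations, and the identification of each subalgebra with the image of the Lie algebra of a group $G$ from the list. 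None of this appears in your proposal, so the six cases (a)--(f) are asserted rather than derived. In short, the overall shape (split off a pointed factor, classify the non-pointed part) matches the paper, but each load-bearing step is either absent or replaced by an incorrect substitute.
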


Here $\Ver_p(G)$, for a reductive group $G$ over $\bk$, is the semisimplification of the category of tilting modules of $G$, first introduced in \cite{GK, GM}. Such categories play a central role among semisimple tensor categories in positive characteristic, see \cite{CEO1, CEO2, Os}. In order to establish our main theorem, we prove a number of facts about these tensor categories, mainly concerning the invertible objects in $\Ver_p(G)$ and the connection between $\Ver_p(G)$ and representations of the image of the Lie algebra of $G$ inside $\Ver_p=\Ver_p(\SL_2)$, by taking a principal $\SL_2\to G$. Many of these statements are known, but not always with published proofs. Additionally, as a consequence of the methods behind the main theorem, we classify all possible equivalences $\Ver_p(G)\simeq\Ver_p(G')$ for distinct adjoint-type simple algebraic groups~$G,G'$.

Theorem~\ref{Thmmnp}, and in fact already the `base case' (1)(c), show that the potential finite values of $m+n$ are precisely the multiples of $p$ when $p\ge 5$. For completeness we prove, in Proposition~\ref{PropNX6}, that the potential values for $p\in\{2,3\}$ are precisely the multiples of $p^2$.

The paper is organised as follows. In Section~\ref{SecPrelim} we recall the relevant notions concerning tensor categories. In Section~\ref{SecFiniteP} we establish some basic properties of symmetric and exterior powers, mainly relating to filtrations. These results are new if the characteristic of the base field is $2$. In Section~\ref{SecVerpG} we establish the necessary results on $\Ver_p(G)$. In Section~\ref{SecMainThm} we prove Theorem~\ref{Thmmnp} by reducing it to a classification of Lie subalgebras of special linear Lie algebras in $\Ver_p$ that we solve computationally. In Section~\ref{SecExtra} we collect some computational results on symmetric and exterior powers of simple objects in some small non-semisimple incompressible categories, leading to a refinement of the conjecture regarding vanishing powers, and focus on $p\in\{2,3\}$.

\section{Preliminaries}\label{SecPrelim}

\subsection{Tensor categories} 

\subsubsection{} Let $\bk$ be an algebraically closed field. We write `tensor category' for a symmetric rigid monoidal $\bk$-linear abelian category with unit object $\unit$, where $\End(\unit)\cong\bk$ and all objects have finite length. We write `tensor functor' for a symmetric monoidal exact functor between tensor categories. Note that such categories and functors are often labelled `symmetric' in the literature, for instance in \cite{EGNO}, but we omit the word `symmetric' for brevity. We write $c_{X,Y}$ for the braiding morphism $X\otimes Y\to Y\otimes X$, and we occasionally write just $c$ and omit the objects if they are clear from context. We say that a tensor category $\cC$ is generated by an object $X\in\cC$ if every object in $\cC$ can be obtained from $X$ via taking tensor products, duals, direct sums and subquotients. We write $\Vecc$ and $\sVec$ for the categories of finite-dimensional vector spaces and super vector spaces respectively, and write $\bar\unit$ for the 1-dimensional odd super vector space.

\subsubsection{} For a tensor category $\cC$, let $\pi(\cC)$ be the fundamental group of $\cC$ as introduced in \cite{Del01}, see also \cite[\S 4.1]{CEO3}. For an affine group scheme $G$ in the category $\cC$, we say that a group morphism $\phi:\pi(\cC)\to G$ is a constraint on $G$ if $(G,\phi)$ is a $\cC$-group in the sense of \cite{CEO3}, that is the canonical action of $\pi(\cC)$ on $\cO(G)$ equals the adjoint action via $\phi$. We write $\Rep(G,\phi)$ for the category of representations of $G$ for which the two actions of $\pi(\cC)$ (one the canonical action in $\cC$ and the other coming from the action of $G$ via $\phi$) are the same. Similarly, if $\fg$ is a Lie algebra in $\cC$ and $\fg_\pi$ is the Lie algebra of $\pi(\cC)$, then we say that a Lie algebra morphism $\phi:\fg_\pi\to\fg$ is a constraint on $\fg$ if the canonical action of $\fg_\pi$ on $\fg$ equals the adjoint action via $\phi$. We write $\Rep(\fg,\phi)$ for the category of representations of $\fg$ for which the two actions of $\fg_\pi$ (coming from $\fg$ and $\pi(\cC)$) are the same.

\subsection{Invertible objects}
\subsubsection{} Let $\cC$ be a tensor category over $\bk$. The categorical dimension $\dim(X)$ of an object $X$ in $\cC$ is the unique element of $\bk$ such that the morphism $\Tr(\id_X):\unit\to\unit$ defined as the composition
$$\begin{tikzcd}
\unit \arrow{r}{\coev_X} & X\otimes X^* \arrow{r}{c_{X,X^*}} & X^*\otimes X \arrow{r}{\ev_X} & \unit
\end{tikzcd}$$
is equal to $\dim(X)\cdot\id_\unit$. We have $\dim(X\otimes Y)=\dim(X)\dim(Y)$ for all $X,Y\in\cC$.

We call an object $L$ in $\cC$ invertible if $L^*\otimes L\cong\unit$. An invertible $L$ is necessarily simple, and if $X$ is a simple object in $\cC$ then $L\otimes X$ must also be simple, since $L^*\otimes L\otimes X\cong X$. The braiding morphism $c_{L,L}:L\otimes L\to L\otimes L$ on an invertible object $L$ must satisfy $c_{L,L}^2=\id_{L\otimes L}$, so for $\charr(\bk)\neq2$ we say $L$ is even if $c_{L,L}=\id_{L\otimes L}$ and $L$ is odd if $c_{L,L}=-\id_{L\otimes L}$.

\begin{lemma}\label{LemInvDim}
If $L$ is invertible and $\charr(\bk)\neq2$, then $L$ is even if and only if $\dim(L)=1$ and odd if and only if $\dim(L)=-1$.
\end{lemma}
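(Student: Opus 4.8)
\emph{Proof proposal.} The plan is to identify the scalar by which the symmetric braiding $c_{L,L}$ acts on the simple object $L\otimes L$ and to show, by a short diagram chase, that it equals $\dim(L)$.

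First I would collect the structural facts. Since $L$ is invertible so is $L\otimes L$, hence $L\otimes L$ is simple and $\End(L\otimes L)\cong\bk$; as $c_{L,L}^2=\id_{L\otimes L}$, this forces $c_{L,L}=\lambda\cdot\id_{L\otimes L}$ for a unique $\lambda\in\bk$ with $\lambda^2=1$, so $\lambda\in\{1,-1\}$ and, since $\charr(\bk)\neq2$, these are exactly the two cases ``$L$ even'' and ``$L$ odd''. I would also note that $\ev_L\colon L^*\otimes L\to\unit$ and $\coev_L\colon\unit\to L\otimes L^*$ are isomorphisms: their sources are isomorphic to the simple object $\unit$, so any nonzero morphism from them to $\unit$ is invertible, and neither map can be zero since the triangle identities would then give $\id_L=0$.

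Next I would observe that, since $L\otimes L^*$ and $L^*\otimes L$ are both isomorphic to $\unit$, the space $\Hom(L\otimes L^*,\,L^*\otimes L)$ is one-dimensional, so one can write $c_{L,L^*}=\mu\cdot(\ev_L^{-1}\circ\coev_L^{-1})$ for a unique $\mu\in\bk$. Substituting this into $\dim(L)\cdot\id_\unit=\ev_L\circ c_{L,L^*}\circ\coev_L$ and cancelling the isomorphisms gives $\mu=\dim(L)$, so it remains to show $\mu=\lambda$. For this I would apply the hexagon axiom to the triple $(L,L,L^*)$, giving $c_{L,\,L\otimes L^*}=(\id_L\otimes c_{L,L^*})\circ(c_{L,L}\otimes\id_{L^*})$ once the associativity and unit constraints are suppressed, and then precompose both sides with $\id_L\otimes\coev_L\colon L\to L\otimes L\otimes L^*$. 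On the left, naturality of the braiding applied to $\coev_L$ rewrites $c_{L,\,L\otimes L^*}\circ(\id_L\otimes\coev_L)$ as $\coev_L\otimes\id_L$ (up to unit constraints); on the right, $c_{L,L}\otimes\id_{L^*}=\lambda\cdot\id$ and $c_{L,L^*}\circ\coev_L=\mu\cdot\ev_L^{-1}$, so the right side becomes $\lambda\mu\cdot(\id_L\otimes\ev_L^{-1})$. Postcomposing with $\id_L\otimes\ev_L$ and invoking the zigzag identity $(\id_L\otimes\ev_L)\circ(\coev_L\otimes\id_L)=\id_L$ collapses the left side to $\id_L$ and the right side to $\lambda\mu\cdot\id_L$. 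Hence $\lambda\mu=1$, so $\mu=\lambda^{-1}=\lambda$, and combining with $\mu=\dim(L)$ yields $\dim(L)=\lambda$, which is precisely the asserted equivalences.

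The only real obstacle is the bookkeeping with the associativity and unit isomorphisms, which MacLane coherence handles but which must be inserted carefully; conceptually nothing is at stake. If one prefers to avoid the hexagon, the same relation $\lambda\mu=1$ can be extracted from the standard graphical identity expressing $c_{L,L^*}$ as the ``partial rotation'' of $c_{L,L}^{-1}$ built from $\ev_L$ and $\coev_L$.
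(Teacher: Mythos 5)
Your proof is correct and follows essentially the same route as the paper: both arguments reduce the claim to the compatibility of the self-braiding $c_{L,L}$ with $c_{L,L^*}$ and the (co)evaluation maps, the latter computing $\dim(L)$. Your hexagon--naturality--zigzag computation is exactly the justification for the identity that the paper's proof asserts in one line (``the composition $(\ev_L\otimes\id)\circ(\id\otimes c_{L,L})\circ(c_{L,L^*}\otimes\id)\circ(\coev_L\otimes\id)=\id_L$''), so you have merely made the same argument more explicit.
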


\begin{proof}
By definition of evaluation and coevaluation morphisms, the composition
\[L\xrightarrow{\coev_L\otimes\id}L\otimes L^*\otimes L\xrightarrow{c_{L,L^*}\otimes\id}L^*\otimes L\otimes L\xrightarrow{\id\otimes c_{L,L}}L^*\otimes L\otimes L\xrightarrow{\ev_L\otimes\id}L\]
must be $\id_L$. If $c_{L,L}=\pm\id_{L\otimes L}$ then this composition equals $\pm\Tr(\id_X)\otimes\id_L$, giving the result.
\end{proof}

%

\subsubsection{} We call $\cC$ pointed if every simple object in $\cC$ is invertible. For an abelian group $\Gamma$ we define $\Vecc_\Gamma$ to be the semisimple pointed tannakian category with all simples 1-dimensional and indexed by elements of $\Gamma$ so that $g\otimes h=gh$ for $g,h\in\Gamma$. For a group homomorphism $\phi:\Gamma\to\mZ/2$, we define $\sVec_{\Gamma,\phi}$ to be the semisimple pointed supertannakian category with underlying tensor category $\Vecc_\Gamma$ and each simple $g$ being even if $\phi(g)=0$ and odd if $\phi(g)=1$.

If $\cC$ is pointed and semisimple then it is supertannakian by \cite[6.2.3]{Co}, and thus we have the following well-known fact.

\begin{lemma}\label{LemPointed} If $\cC$ is a pointed semisimple tensor category, then it is equivalent to $\sVecc_{\Gamma,\phi}$, where $\Gamma$ is the group of isomorphism classes of invertible objects with operation $\otimes$, and the map $\phi:\Gamma\to\mZ/2$ sends even invertibles to 0 and odd invertibles to 1.
\end{lemma}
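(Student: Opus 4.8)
The plan is to exhibit the equivalence as an explicit functor built from the invertible objects, and then check it is a tensor equivalence. First I would note that by Deligne's theorem (via \cite[6.2.3]{Co}, which says a pointed semisimple tensor category is supertannakian), $\cC$ is of the form $\Rep(G,\varepsilon)$ for some pro-algebraic supergroup $(G,\varepsilon)$; since every simple is invertible and semisimplicity holds, $G$ is a (pro-finite in the appropriate sense, but at least) diagonalizable supergroup, so $G$ is the Cartier dual of the group $\Gamma$ of isomorphism classes of invertible objects, with the parity determined by the grading element. Alternatively, and more in the self-contained spirit of the paper, I would construct the comparison functor directly: choose, for each $g\in\Gamma$, a representative invertible object $L_g$ with $L_{\unit}=\unit$, and for each pair $g,h$ an isomorphism $\mu_{g,h}\colon L_g\otimes L_h\xrightarrow{\ \sim\ }L_{gh}$. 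Define $F\colon\sVecc_{\Gamma,\phi}\to\cC$ on simples by sending the simple labelled $g$ to $L_g$ and extending additively; the $\mu_{g,h}$ give the tensor structure.

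The key steps are then: (i) check $F$ is well-defined and fully faithful: since both categories are semisimple with simples indexed bijectively by $\Gamma$ (in $\cC$ every simple is invertible, hence isomorphic to some $L_g$, and $L_g\cong L_h$ forces $g=h$), and $\End$ of every simple is $\bk$ on both sides, $F$ induces bijections on $\Hom$-spaces between simples and hence is fully faithful; it is essentially surjective by construction, so it is an equivalence of $\bk$-linear abelian categories. (ii) Check that $F$ can be upgraded to a \emph{monoidal} functor: the only obstruction to associativity coherence of the chosen $\mu_{g,h}$ is a class in $H^3(\Gamma,\bk^\times)$, but one shows this class vanishes here — the cleanest way is to observe that the associativity isomorphisms in $\cC$ are already coherent (it is a genuine tensor category) and that any two choices of $\mu_{g,h}$ differ by a $2$-cochain, so one may rescale the $\mu_{g,h}$ to make $F$ strictly compatible with associators; equivalently, invoke that a pointed semisimple tensor category has trivial associator after a suitable choice of simples because $\End(\unit)=\bk$ and the relevant obstruction class is killed. (iii) Check $F$ is compatible with the braiding: on $L_g\otimes L_g$ the braiding is $\pm\id$, and by Lemma~\ref{LemInvDim} the sign is $+1$ iff $\dim(L_g)=1$ iff $g$ is even iff $\phi(g)=0$, which is exactly the braiding on the simple $g$ in $\sVecc_{\Gamma,\phi}$; for distinct $g\neq h$ one checks $c_{L_g,L_h}$ corresponds under $\mu$ to a scalar on $L_{gh}$, and a standard argument (using $c_{L_g,L_h}\circ c_{L_h,L_g}=\id$ and the hexagon) forces this scalar, after possibly adjusting $\mu$, to match the sign $(-1)^{\phi(g)\phi(h)}$ dictated by $\sVecc_{\Gamma,\phi}$. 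Finally, I would remark that $\dim(L)\in\{\pm1\}$ because $c_{L,L}^2=\id$ forces $c_{L,L}=\pm\id$ (an involution of a line), so the case $\charr(\bk)=2$ is harmless: then all invertibles are ``even'' and $\sVecc_{\Gamma,\phi}=\Vecc_\Gamma$, and $\phi$ is forced to be trivial, consistent with the statement.

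The main obstacle is step (ii)–(iii): disentangling the associator and braiding obstruction classes and showing one can choose the $\mu_{g,h}$ so that $F$ is simultaneously a monoidal and braided functor. Concretely this amounts to the statement that the braided categorical structure on a pointed semisimple category over $\bk$ is classified by a quadratic form $\Gamma\to\bk^\times$ (the self-braiding scalars) together with a compatible associator, and that when the self-braidings take values in $\{\pm1\}$ the whole structure is rigidified by $(\Gamma,\phi)$. This is exactly the content that makes the lemma ``well-known'', and in the write-up I would either cite the supertannakian classification (Deligne) to sidestep the cohomological bookkeeping, or carry out the rescaling argument explicitly, using that $H^3(\Gamma,\bk^\times)$ and the relevant abelian-cohomology obstructions vanish once we exploit that $\cC$ genuinely is a tensor category (so its associator and braiding already satisfy pentagon and hexagon) — the work is only to transport that coherent data along the chosen identifications $L_g\otimes L_h\cong L_{gh}$.
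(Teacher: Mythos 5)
Your first route is exactly what the paper does: it simply invokes \cite[6.2.3]{Co} to conclude that a pointed semisimple tensor category is supertannakian, and then records the identification with $\sVecc_{\Gamma,\phi}$ as a well-known consequence, with no further argument. So the proposal is correct and matches the paper's approach. One caveat on your alternative explicit route: in step (ii) the coherence of $\cC$'s associator does \emph{not} by itself show the transported class in $H^3(\Gamma,\bk^\times)$ vanishes --- it only shows the class is well-defined modulo coboundaries, and for a merely monoidal pointed category this class can be nontrivial; what actually rigidifies the structure is the \emph{symmetric} braiding, via the Eilenberg--MacLane/Joyal--Street classification by quadratic forms, which here forces the form to be the homomorphism $\phi:\Gamma\to\{\pm1\}$ and the associator to be trivializable. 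Since you explicitly fall back on the supertannakian citation to sidestep this, the gap is not fatal, but as written step (ii) overstates what coherence alone gives you.
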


We will often need to decompose a tensor category into its invertible and non-invertible objects, so we establish the following lemma.

\begin{lemma}\label{LemDecomp}
Suppose $\cA$ and $\cB$ are tensor subcategories of $\cC$ such that $\cB$ is pointed and semisimple, the only invertible object in $\cA$ is $\unit$, and $\cC$ is tensor-generated by $\cA$ and $\cB$ together. Then the tensor functor $\cA\boxtimes\cB\to\cC$ sending $A\boxtimes B$ to $A\otimes B$ is an equivalence.
\end{lemma}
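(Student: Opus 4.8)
The plan is to verify that the tensor functor $F\colon\cA\boxtimes\cB\to\cC$, $A\boxtimes B\mapsto A\otimes B$, is an equivalence by checking it is fully faithful and essentially surjective; essential surjectivity will follow formally from fully faithfulness together with the generation hypothesis, so the heart of the matter is full faithfulness. Recall that in $\cA\boxtimes\cB$ every object is a finite direct sum of objects $A\boxtimes B$ with $A\in\cA$, $B\in\cB$, and $\Hom_{\cA\boxtimes\cB}(A\boxtimes B, A'\boxtimes B')=\Hom_\cA(A,A')\otimes_\bk\Hom_\cB(B,B')$. Since $\cB$ is pointed semisimple, Lemma~\ref{LemPointed} identifies it with $\sVecc_{\Gamma,\phi}$, so its simple objects are invertible; write $L_g$ for the invertible indexed by $g\in\Gamma$. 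By additivity it suffices to show that for all $A,A'\in\cA$ and all $g,h\in\Gamma$ the map
\[
\Hom_\cA(A,A')\otimes_\bk\Hom_\cB(L_g,L_h)\;\longrightarrow\;\Hom_\cC(A\otimes L_g,\,A'\otimes L_h)
\]
is an isomorphism. Tensoring with the invertible object $L_g^*=L_{g^{-1}}$ is an autoequivalence of $\cC$, so this reduces to showing $\Hom_\cA(A,A')\otimes\Hom_\cB(L_g, L_g)\xrightarrow{\ \sim\ }\Hom_\cC(A, A'\otimes (L_g^*\otimes L_g))=\Hom_\cC(A,A')$ when $h=g$, and $\Hom_\cC(A\otimes L_g, A'\otimes L_h)=0$ when $h\ne g$; note $\Hom_\cB(L_g,L_g)\cong\bk$.

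For the first claim I would argue that $\Hom_\cA(A,A')\to\Hom_\cC(A,A')$ is an isomorphism because $\cA$ is a tensor (hence full abelian rigid) subcategory of $\cC$; this is immediate from the definition of tensor subcategory. For the vanishing when $h\ne g$, I would show that $\Hom_\cC(A\otimes L_g, A'\otimes L_h)\cong\Hom_\cC(A, A'\otimes L_{h^{-1}g})$ and that $L:=L_{h^{-1}g}$ is a nontrivial invertible object with the property that $\Hom_\cC(A,A'\otimes L)=0$ for all $A,A'\in\cA$. The key point here is that $A'\otimes L$ has all its composition factors of the form $S\otimes L$ with $S$ a simple subquotient of $A'$, hence lying in the "$\cA\otimes L$" slice, while $A$ has all composition factors in $\cA$; so it is enough to know that a simple object of $\cA$ is never isomorphic to $S\otimes L$ for $S\in\cA$ simple and $L$ a nontrivial invertible of $\cB$. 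If $S_1\cong S_2\otimes L$ with $S_i$ simple in $\cA$, then $L\cong S_1\otimes S_2^*$ would be an invertible object lying in $\cA$ (as $\cA$ is a tensor subcategory), forcing $L\cong\unit$, i.e. $h=g$ — contradiction. This is where the hypothesis that $\unit$ is the only invertible object of $\cA$ does the essential work.

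Having full faithfulness, essential surjectivity follows since the essential image of $F$ is a tensor subcategory of $\cC$ (it is closed under $\otimes$, duals, and direct sums because $\cA\boxtimes\cB$ is, and closed under subquotients because $F$ is fully faithful between abelian categories with $F$ exact, so its image is closed under subobjects and quotients) containing $\cA$ and $\cB$; by the generation hypothesis this tensor subcategory is all of $\cC$. Finally $F$ is a tensor functor by construction (the symmetric monoidal structure on $\cA\boxtimes\cB$ and the braiding of $\cC$ are compatible via the universal property of $\boxtimes$), so $F$ is an equivalence of tensor categories. I expect the main obstacle to be the bookkeeping around composition factors: making precise, via the exactness of $-\otimes L$ and rigidity, that no simple of $\cA$ can be "twisted into" another simple of $\cA$ by a nontrivial pointed object, and handling the general (non-simple) $A, A'$ by reducing to simples through the five-lemma or a dévissage on length.
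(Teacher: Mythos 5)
Your proposal follows essentially the same route as the paper: reduce full faithfulness, via duality and tensoring by invertibles, to the vanishing of $\Hom_\cC(A,A'\otimes L)$ for a nontrivial invertible $L$ of $\cB$, which holds because otherwise $L$ would be forced into $\cA$, contradicting that $\unit$ is its only invertible. Two small repairs are needed. First, your claim that $S_1\cong S_2\otimes L$ implies $L\cong S_1\otimes S_2^*$ is false unless $S_2$ is itself invertible; the correct statement is that $L\cong \unit\otimes L$ embeds into $(S_2\otimes S_2^*)\otimes L\cong S_1\otimes S_2^*$ via the (injective) coevaluation, so $L$ is a subobject of an object of $\cA$ and hence lies in $\cA$ since tensor subcategories are closed under subquotients --- the intended conclusion survives. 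Second, "fully faithful and exact implies the essential image is closed under subquotients" is not a general fact about abelian categories; what one actually uses is that $F$ is a surjective tensor functor which is fully faithful \emph{and sends simples to simples} (the latter because tensoring a simple of $\cA$ with an invertible of $\cB$ is simple), which is exactly the criterion the paper invokes from \cite[\S 3.1]{CEO3}, so you should either cite that or supply the simples-to-simples step explicitly before concluding essential surjectivity.
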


\begin{proof}
We have a surjective tensor functor $F:\cA\boxtimes\cB\to\cC$, and by \cite[\S 3.1]{CEO3} it suffices to show $F$ sends simples to simples and is fully faithful. The former holds since tensoring by invertibles sends simples to simples. For the latter we require
$$\Hom(A_1,A_2)\otimes_\bk\Hom(B_1,B_2)\cong\Hom(A_1\otimes B_1,A_2\otimes B_2)$$
for all $A_1,A_2\in\cA$ and $B_1,B_2\in\cB$. This reduces to
$$\Hom(A,\unit)\otimes_\bk\Hom(\unit,B)\cong\Hom(A,B)$$
for all $A,B$ indecomposable, using duality adjunctions and extrapolating from indecomposables to their direct sums. If $B\cong\unit$ then this clearly holds, and otherwise we must have $\Hom(A,B)=0$ or else the invertible $B$ is in $\cA$, so it holds in all cases.
\end{proof}

\section{Symmetric and exterior powers}\label{SecFiniteP}

Let $\cC$ be a tensor category over an algebraically closed field $\bk$ with $p=\mathrm{char}(\bk)>0$.

\subsection{Definitions}

\subsubsection{}  If $X$ is an object in $\cC$, then the second exterior power is the image of $\mathrm{id}_{X\otimes X}-c_{X,X}$, which we denote by $\Lambda^2X$ (resp. $\wedge^2X$) when treated as a subobject (resp. quotient) of $X^{\otimes2}$. We define $\Sym^2X$ and $\Gamma^2X$ by the exact sequences
$$0\to\Lambda^2X\to X\otimes X\to\Sym^2X\to0, \qquad\qquad 0\to\Gamma^2X\to X\otimes X\to\wedge^2X\to0.$$
The symmetric and exterior powers for $n\geq3$ are defined by the exact sequences
$$\begin{tikzcd}[row sep=0.5em]
0 \arrow{r} & \displaystyle\sum_{i=1}^{n-1}X^{\otimes i-1}\otimes\Lambda^2X\otimes X^{\otimes n-i-1} \arrow{r} & X^{\otimes n} \arrow{r} & \Sym^nX \arrow{r} & 0\\
0 \arrow{r} & \displaystyle\sum_{i=1}^{n-1}X^{\otimes i-1}\otimes\Gamma^2X\otimes X^{\otimes n-i-1} \arrow{r} & X^{\otimes n} \arrow{r} & \wedge^nX \arrow{r} & 0
\end{tikzcd}$$
and their dual variants, the divided and `dual exterior' powers, are given by
\begin{align*}
\Gamma^nX&=\bigcap_{i=1}^{n-1}X^{\otimes i-1}\otimes\Gamma^2X\otimes X^{\otimes n-i-1},\\
\Lambda^nX&=\bigcap_{i=1}^{n-1}X^{\otimes i-1}\otimes\Lambda^2X\otimes X^{\otimes n-i-1}.
\end{align*}

\begin{remark}\label{RemSymSplit}
$\Sym^nX$ can alternately be defined as the cokernel of the morphism
\[\bigoplus_{i=1}^{n-1}\id_X^{\otimes i-1}\otimes(\id_{X\otimes X}-c_{X,X})\otimes\id_X^{\otimes n-i-1}:(X^{\otimes n})^{n-1}\to X^{\otimes n}\]
and $\Gamma^nX$ is the kernel of a similar morphism $X^{\otimes n}\to (X^{\otimes n})^{n-1}$. In characteristic $p\geq3$, both $\Sym^2X$ and $\Gamma^2X$ are isomorphic to the image of $\id_{X\otimes X}+c_{X,X}$ and we have a splitting $X\otimes X\cong\wedge^2X\oplus\Sym^2X$, meaning $\wedge^nX$ is the quotient of the same morphism above except with $\id_{X\otimes X}-c_{X,X}$ replaced by $\id_{X\otimes X}+c_{X,X}$. In characteristic 2 however, we have $(\id_{X\otimes X}-c_{X,X})^2=0$ meaning this splitting does not exist and we cannot necessarily write $\wedge^nX$ as a quotient of a morphism $(X^{\otimes n})^{n-1}\to X^{\otimes n}$. In fact, $\Lambda^nX\subseteq\Gamma^nX$ and so $\wedge^nX$ is a quotient of $\Sym^nX$ for all $n\in\mN$ when $p=2$.
\end{remark}

\begin{lemma}\label{LemExtDims}
Let $s_n=\sum_{\sigma\in S_n}\sigma$ and $a_n=\sum_{\sigma\in S_n}\varepsilon(\sigma)\sigma$ be the symmetrizer and antisymmetrizer on $X^{\otimes n}$, where $\varepsilon:S_n\to\{1,-1\}$ is the sign homomorphism. If $n<p$ then
$$\Sym^nX\cong\im(s_n)\cong\Gamma^nX,\qquad \wedge^nX\cong\im(a_n)\cong\Lambda^nX$$
where $\im$ means the image of a morphism. We also have
$$\dim(\im(s_n))={\dim X+n-1 \choose n},\qquad\dim(\im(a_n))={\dim X\choose n}$$
and for $n<p$ these formulas also apply to $\Sym,\Gamma$ and $\wedge,\Lambda$ respectively.
\end{lemma}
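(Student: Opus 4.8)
The plan is to use the $S_n$-action on $X^{\otimes n}$ afforded by the symmetric braiding, exploiting that $n<p$ makes $n!$ invertible in $\bk$ (so $\bk[S_n]$ is semisimple by Maschke). I may assume $n\ge2$, the case $n\le1$ being trivial; then $p\ge3$, so $\charr(\bk)\neq2$. Write $s_i\in\End(X^{\otimes n})$ for $\id_X^{\otimes i-1}\otimes c_{X,X}\otimes\id_X^{\otimes n-i-1}$, the image of the transposition $(i,i+1)$, and set $e_s:=\tfrac1{n!}s_n$, $e_a:=\tfrac1{n!}a_n$. From $s_n\sigma=s_n$ and $a_n\sigma=\varepsilon(\sigma)a_n$ for all $\sigma\in S_n$ one gets that $e_s,e_a$ are idempotents satisfying $e_ss_i=e_s=s_ie_s$ and $e_as_i=-e_a=s_ie_a$, and that $\im(s_n)=\im(e_s)$, $\im(a_n)=\im(e_a)$. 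Since idempotents split in an abelian category, I fix factorisations $X^{\otimes n}\xrightarrow{\pi_s}\im(e_s)\xrightarrow{\iota_s}X^{\otimes n}$ and $X^{\otimes n}\xrightarrow{\pi_a}\im(e_a)\xrightarrow{\iota_a}X^{\otimes n}$ with $\pi\iota=\id$, $\iota\pi=e$.

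The first goal is to identify the four objects in the statement with $\im(e_s)$ (for $\Sym^n,\Gamma^n$) and $\im(e_a)$ (for $\wedge^n,\Lambda^n$). Since $\charr(\bk)\neq2$, the operators $\tfrac12(\id_{X^{\otimes2}}\pm c_{X,X})$ are complementary idempotents, so as subobjects of $X^{\otimes2}$ we have $\Lambda^2X=\ker(\id+c)=\im(\id-c)$ and $\Gamma^2X=\ker(\id-c)=\im(\id+c)$; biexactness of $\otimes$ then identifies $X^{\otimes i-1}\otimes\Lambda^2X\otimes X^{\otimes n-i-1}$ with $\im(1-s_i)=\ker(1+s_i)$ and $X^{\otimes i-1}\otimes\Gamma^2X\otimes X^{\otimes n-i-1}$ with $\im(1+s_i)=\ker(1-s_i)$ inside $X^{\otimes n}$. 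Now four short verifications — each using only that the $s_i$ generate $S_n$ together with the displayed relations for $e_s,e_a$ — establish universal properties: $\iota_s$ is the joint equalizer of all pairs $(\id,s_i)$, so $\Gamma^nX=\bigcap_i\ker(1-s_i)\cong\im(e_s)$; $\pi_s$ is their joint coequalizer, so $\Sym^nX=X^{\otimes n}/\textstyle\sum_i\im(1-s_i)\cong\im(e_s)$; and the sign-twisted statements exhibit $\iota_a$ (resp. $\pi_a$) as the universal morphism $f$ with $s_if=-f$ (resp. $g$ with $gs_i=-g$), so $\Lambda^nX=\bigcap_i\ker(1+s_i)\cong\im(e_a)$ and $\wedge^nX=X^{\otimes n}/\sum_i\im(1+s_i)\cong\im(e_a)$. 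This proves the first line of the Lemma.

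For the dimensions I would use that a split idempotent $e=\iota\pi$ on an object $Y$ satisfies $\dim\im(e)=\Tr_{\im(e)}(\id)=\Tr_{\im(e)}(\pi\iota)=\Tr_Y(\iota\pi)=\Tr_Y(e)$ by cyclicity of the categorical trace. Hence $\dim\im(s_n)=\Tr(e_s)=\tfrac1{n!}\sum_{\sigma\in S_n}\Tr(\rho_\sigma)$, where $\rho_\sigma\in\End(X^{\otimes n})$ permutes the tensor factors. A standard graphical-calculus computation gives $\Tr(\rho_\sigma)=(\dim X)^{c(\sigma)}$, where $c(\sigma)$ is the number of cycles of $\sigma$ (fixed points counted), since closing up the permutation diagram of $\sigma$ yields a disjoint union of $c(\sigma)$ circles. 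Therefore $\Tr(e_s)=\tfrac1{n!}\sum_\sigma(\dim X)^{c(\sigma)}$, and the classical identity $\sum_{\sigma\in S_n}t^{c(\sigma)}=t(t+1)\cdots(t+n-1)$ (unsigned Stirling numbers of the first kind) gives $\Tr(e_s)=\binom{\dim X+n-1}{n}$, where $\binom{y}{n}:=y(y-1)\cdots(y-n+1)/n!$ makes sense in $\bk$ because $n!\in\bk^\times$. Identically, $\Tr(e_a)=\tfrac1{n!}\sum_\sigma\varepsilon(\sigma)(\dim X)^{c(\sigma)}=\binom{\dim X}{n}$ via $\sum_\sigma\varepsilon(\sigma)t^{c(\sigma)}=t(t-1)\cdots(t-n+1)$. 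Finally the isomorphisms of the second paragraph transport both formulas to $\Sym^n,\Gamma^n$ and $\wedge^n,\Lambda^n$.

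I expect the only real care to be needed in the second paragraph: keeping the four objects and their ``subobject versus quotient'' incarnations straight, and invoking $\charr(\bk)\neq2$ exactly where the identifications $\Lambda^2X=\ker(\id+c)$, $\Gamma^2X=\ker(\id-c)$ enter (which is automatic here, since $n\ge2$ and $n<p$ force $p\ge3$). The $S_n$-representation theory is trivial in this range, and the dimension count reduces to the two classical Stirling identities together with the standard value $\Tr(\rho_\sigma)=(\dim X)^{c(\sigma)}$.
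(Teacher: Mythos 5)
Your proposal is correct and follows essentially the same route as the paper: identify $\Sym^n,\Gamma^n,\wedge^n,\Lambda^n$ with the images of the split idempotents $s_n/n!$ and $a_n/n!$ using that $1-\sigma$ (resp.\ $1-\varepsilon(\sigma)\sigma$) annihilates them and that the simple transpositions generate $S_n$. For the dimensions the paper simply cites Deligne's characteristic-zero argument and notes it survives when $n<p$; your trace-of-idempotent computation via $\Tr(\rho_\sigma)=(\dim X)^{c(\sigma)}$ and the Stirling identities is exactly that argument written out, so there is no substantive difference.
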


\begin{proof}
For $p=2$ we have $n\in\{0,1\}$ and the lemma is obvious. If $n<p\geq3$ then the morphisms $s_n/n!$ and $a_n/n!$ are idempotent, so their images are summands of $X^{\otimes n}$. Then $s_n/n!$ is annihilated by $1-\sigma$ for all $\sigma\in S_n$, and since the projection $f:X^{\otimes n}\twoheadrightarrow\Sym^nX$ satisfies $f=f\sigma$, $f$ annihilates the complement $1-s_n/n!$. Hence $\Sym^nX\cong\im(s_n/n!)$, and since $p\geq3$ we can repeat for $\wedge^nX$ replacing $\sigma$ by $\varepsilon(\sigma)\sigma$. $\Gamma^nX$ and $\Lambda^nX$ follow dually.

The dimension formulas are derived for characteristic 0 in \cite[7.1]{Del01}, and the same proof applies in characteristic $p$ when $n<p$.
\end{proof}

\begin{remark}
If $L\in\cC$ is invertible then $1-c_{L,L}=0$ if $L$ is even and $2$ if $L$ is odd. Consequently, if $L$ is even (or if $p=2$) then $\Sym^nL\cong L^{\otimes n}$ for all $n\in\mN$ while $\Lambda^nL=0$ for $n\geq2$, and these are reversed for odd $L$ when $p\neq2$.
\end{remark}

\subsection{Symmetric and exterior powers of filtered objects}\label{SecFiltObjects}

\subsubsection{} We write $TX=\bigoplus_{n=0}^\infty X^{\otimes n}\in\Ind\cC$ for the tensor algebra on $X$, which is a graded Hopf algebra with comultiplication given by the diagonal morphism $X\to X\otimes\unit\oplus\unit\otimes X$ on the first component. The unit and counit are inclusion of and projection onto $X^{\otimes0}=\unit$ respectively, and the antipode is given by $(-1)^n$ on $X^{\otimes n}$. We can then write $\Sym X=\bigoplus_{n=0}^\infty\Sym^nX$ and $\wedge X=\bigoplus_{n=0}^\infty\wedge^nX$ as quotients of $TX$ by (co)ideals,
\begin{align*}
\Sym X&=TX\Big/\sum_{i,j\in\mN}X^{\otimes i}\otimes\Lambda^2X\otimes X^{\otimes j},\\
\wedge X&=TX\Big/\sum_{i,j\in\mN}X^{\otimes i}\otimes\Gamma^2X\otimes X^{\otimes j}.
\end{align*}
This induces Hopf algebra structures on $\Sym X$ and $\wedge X$.

Now suppose $0\to X\to Y\to Z\to 0$ is an exact sequence. This induces a filtration on $Y^{\otimes n}$ with composition factors $\bigoplus_\sigma\sigma(X^{\otimes n-i}\otimes Z^{\otimes i})$ for each $0\leq i\leq n$, where the direct sum is over all shuffle permutations $\sigma$ of the tensor product. This in turn induces filtrations on subobjects and quotients of $Y^{\otimes n}$, and we use the prefix $\gr$ to denote the graded object corresponding to a filtration, meaning the direct sum of the composition factors.

\begin{lemma}\label{LemGradedComp}
For an exact sequence $0\to X\to Y\to Z\to 0$ in $\cC$, we have
\begin{equation*}
\begin{array}{ccccccc}
\gr Y^{\otimes2} &\cong& X\otimes X &\oplus& (X\otimes Z\oplus Z\otimes X) &\oplus& Z\otimes Z\\
\gr\Gamma^2Y &\cong& \Gamma^2X &\oplus& X\otimes Z &\oplus& \im(\Gamma^2Y\to\Gamma^2Z)\\
\gr\Lambda^2Y &\cong& \Lambda^2Y\cap(X\otimes X) &\oplus& X\otimes Z &\oplus& \Lambda^2Z
\end{array}
\end{equation*}
where the three columns are the three graded components. The components of $\gr\Sym^2Y$ and $\gr\wedge^2Y$ are computed dually to those of $\gr\Gamma^2Y$ and $\gr\Lambda^2Y$ respectively.
\end{lemma}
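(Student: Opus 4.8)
The plan is to work throughout with the tensor-product filtration $F_0=X\otimes X\subseteq F_1=\ker(q\otimes q)=X\otimes Y+Y\otimes X\subseteq F_2=Y^{\otimes2}$, where $q\colon Y\twoheadrightarrow Z$ is the given surjection. This yields the first row immediately, with $\gr_1\cong(X\otimes Z)\oplus(Z\otimes X)$ realised via the pair of maps $\pi_P=(\id_Y\otimes q)|_{F_1}$ and $\pi_Q=(q\otimes\id_Y)|_{F_1}$. For any subobject $W\subseteq Y^{\otimes2}$ carrying the induced filtration one has $\gr_0 W=W\cap F_0$ and $\gr_2 W\cong(q\otimes q)(W)\subseteq Z^{\otimes2}$, so the outer graded pieces follow purely from naturality of the braiding: $\id_{Y^{\otimes2}}-c_{Y,Y}$ restricts to $\id_{X\otimes X}-c_{X,X}$ on $X\otimes X$, and satisfies $(q\otimes q)(\id_{Y^{\otimes2}}-c_{Y,Y})=(\id_{Z^{\otimes2}}-c_{Z,Z})(q\otimes q)$. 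The first fact gives $\gr_0\Gamma^2Y=\Gamma^2Y\cap(X\otimes X)=\ker(\id_{X\otimes X}-c_{X,X})=\Gamma^2X$, while $\gr_0\Lambda^2Y=\Lambda^2Y\cap(X\otimes X)$ is left as is. The second gives $\gr_2\Gamma^2Y=\im(\Gamma^2Y\to\Gamma^2Z)$ (the map induced by $q\otimes q$) and, using surjectivity of $q\otimes q$, $\gr_2\Lambda^2Y=(\id_{Z^{\otimes2}}-c_{Z,Z})(Z^{\otimes2})=\Lambda^2Z$.

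The heart of the matter is the middle graded pieces. Write $\phi=\id_{Y^{\otimes2}}-c_{Y,Y}$, so $\Lambda^2Y=\im\phi$ and $\Gamma^2Y=\ker\phi$; one checks $\phi$ preserves the filtration, and the key computation is the induced endomorphism of $\gr_1=(X\otimes Z)\oplus(Z\otimes X)$. Naturality of the braiding shows that, in the coordinates $(\pi_P,\pi_Q)$,
\[\gr_1\phi=\begin{pmatrix}\id & -c_{Z,X}\\ -c_{X,Z} & \id\end{pmatrix},\]
and since $\cC$ is symmetric we have $c_{X,Z}\circ c_{Z,X}=\id$, so $\ker(\gr_1\phi)=\{(c_{Z,X}(w),w):w\in Z\otimes X\}$ and $\im(\gr_1\phi)=S:=\{(b,-c_{X,Z}(b)):b\in X\otimes Z\}$ are both ``graphs'', each isomorphic to $X\otimes Z$.

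I would then finish as follows. For $\Lambda^2Y$: one has $\phi(X\otimes Y)\subseteq\Lambda^2Y\cap F_1$, and a short computation with naturality and $q|_X=0$ identifies the image of $\phi(X\otimes Y)$ in $F_1/F_0$ with $S$, so $\gr_1\Lambda^2Y\supseteq S$; conversely, writing any element of $\Lambda^2Y\cap F_1$ as $v=\phi(u)$ and setting $\alpha=(\id_Y\otimes q)(u)$, $\beta=(q\otimes\id_Y)(u)$, naturality yields $\pi_P(v)=\alpha-c_{Z,Y}(\beta)$ and $\pi_Q(v)=\beta-c_{Y,Z}(\alpha)$, hence $\pi_Q(v)=-c_{X,Z}(\pi_P(v))$, so the image of $\Lambda^2Y\cap F_1$ in $F_1/F_0$ lies in $S$; thus $\gr_1\Lambda^2Y\cong S\cong X\otimes Z$. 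For $\Gamma^2Y$: the inclusion $\gr_1\ker\phi\subseteq\ker(\gr_1\phi)$ is formal, and it becomes an equality as soon as $\phi(F_1)\cap F_0=\phi(F_0)$; this identity, with both sides equal to $\Lambda^2X$, is again a short naturality check (a vector $\phi(a)$ with $a\in X\otimes Y$ lies in $F_0$ iff $(\id_X\otimes q)(a)=0$, i.e. iff $a\in X\otimes X$). So $\gr_1\Gamma^2Y=\ker(\gr_1\phi)\cong X\otimes Z$. Finally, the statements for $\gr\Sym^2Y$ and $\gr\wedge^2Y$ come from applying everything to the dual sequence $0\to Z^*\to Y^*\to X^*\to0$ and dualising (which reverses the filtration), via $\Sym^2Y\cong(\Gamma^2(Y^*))^*$ and $\wedge^2Y\cong(\Lambda^2(Y^*))^*$.

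I expect the genuine obstacle to be exactly this middle computation in characteristic $2$: there the standard shortcut of splitting $X\otimes X\cong\Sym^2X\oplus\wedge^2X$ by the idempotent $\tfrac12(\id_{X\otimes X}\pm c_{X,X})$ is unavailable, so everything must be extracted from naturality of the braiding and the relation $c_{X,Z}\circ c_{Z,X}=\id$ alone; the delicate point is pinning down the ``antidiagonal'' copy $S$ of $X\otimes Z$ inside $(X\otimes Z)\oplus(Z\otimes X)$ and verifying that $\Lambda^2Y\cap F_1$ and $\Gamma^2Y\cap F_1$ sit inside $F_1/F_0$ in the way claimed.
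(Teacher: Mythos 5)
Your proposal is correct in substance and uses the same two-step filtration $X\otimes X\subseteq X\otimes Y+Y\otimes X\subseteq Y^{\otimes2}$ as the paper, but it handles the crucial middle graded component differently. The paper fixes an exact faithful functor to $\Vecc$, views $F_1$ and $F_1/F_0\cong X\otimes Z\oplus Z\otimes X$ as $\mZ/2$-representations via the braiding, and uses that $X\otimes Z\oplus Z\otimes X$ is a free (hence projective) $\mZ/2$-module to split $F_1\tto F_1/F_0$ equivariantly; surjectivity on invariants then gives $\gr_1\Gamma^2Y=\ker(\gr_1\phi)$ in one stroke, and the $\Lambda^2$/$\wedge^2$ statements follow from the short exact sequence $0\to\Gamma^2Y\to Y^{\otimes2}\to\wedge^2Y\to0$ and duality. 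You instead compute $\gr_1\phi$ as the explicit matrix $\left(\begin{smallmatrix}\id&-c_{Z,X}\\-c_{X,Z}&\id\end{smallmatrix}\right)$ and identify its kernel and image as graphs; this is more hands-on and avoids invoking projectivity of the regular representation, at the cost of having to verify the exchange condition $\phi(F_1)\cap F_0=\phi(F_0)$ by hand. Both routes are valid, and your identities (e.g. $\pi_Q\circ\phi=-c_{Y,Z}\circ\pi_P\circ\phi$) are genuine equalities of morphisms, so the element language is harmless --- though to be safe you should, as the paper does, fix a faithful exact functor to $\Vecc$ before speaking of ``vectors''.

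One point needs patching: your verification of $\phi(F_1)\cap F_0=\phi(F_0)$ only treats $\phi(a)$ for $a\in X\otimes Y$, whereas $F_1=X\otimes Y+Y\otimes X$. The fix is one line: a general $u=a+b$ with $a\in X\otimes Y$, $b\in Y\otimes X$ can be rewritten as $u=(a-c(b))+(b+c(b))$, where $b+c(b)\in\Gamma^2Y$ and $a-c(b)\in X\otimes Y$, so $\phi(u)=\phi(a-c(b))$ and your criterion applies. With that insertion the argument for $\gr_1\Gamma^2Y=\ker(\gr_1\phi)\cong X\otimes Z$ is complete, and the rest of the proof goes through as you describe.
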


\begin{proof}
Consider the following commutative diagram:
$$\begin{tikzcd}
\Gamma^2Y\cap(X\otimes X) \arrow[hook]{r} \arrow[hook]{d} & K\coloneqq\Gamma^2Y\cap(X\otimes Y+Y\otimes X) \arrow[hook]{d} \arrow[hook]{r} & \Gamma^2Y \arrow[hook]{d}\\
X\otimes X \arrow[hook]{r} \arrow{d}{1-c} & X\otimes Y+Y\otimes X \arrow[hook]{r} \arrow{d}{1-c} & Y\otimes Y \arrow{d}{1-c}\\
X\otimes X \arrow[hook]{r} & X\otimes Y+Y\otimes X \arrow[hook]{r} & Y\otimes Y
\end{tikzcd}$$ 
The top two rows are the induced filtrations on $\Gamma^2Y$ and $Y^{\otimes2}$ respectively, and each arrow labelled $1-c$ is the restriction of $\id_{Y\otimes Y}-c_{Y,Y}$ to each subobject. Since the top two squares are both pullbacks we have that $K$ is the kernel of $1-c$ on $X\otimes Y+Y\otimes X$ and $\Gamma^2Y\cap(X\otimes X)=\Gamma^2X$.

Now fix an exact faithful $\bk$-linear functor $\cC \to\Vecc$ (that we leave out of the notation) so that the objects $X\otimes Y+Y\otimes X$ and
$$(X\otimes Y+Y\otimes X)/(X\otimes X)\cong X\otimes Z\oplus Z\otimes X$$
endowed with the braiding action are both $\mZ/2$-representations. Then in $\Rep(\mZ/2)$ the object $X\otimes Z\oplus Z\otimes X\cong(\bk\mZ/2)^{\dim_\bk(X\otimes Z)}$ is projective, and so $X\otimes Y+Y\otimes X\tto X\otimes Z\oplus Z\otimes X$ is split. Hence, the map from $\mZ/2$-invariants in $X\otimes Y+Y\otimes X$ to invariants in $X\otimes Z\oplus Z\otimes X$ is an epimorphism. But this means the middle graded component of $\Gamma^2Y$, which is given by
$$\im(K\hookrightarrow X\otimes Y+Y\otimes X\twoheadrightarrow X\otimes Z\oplus Z\otimes X),$$
is exactly the kernel of $1-c$ on $X\otimes Z\oplus Z\otimes X$ which is isomorphic to $X\oplus Z$.

The third graded component of $\Gamma^2Y$ is $\im(\Gamma^2Y\to\Gamma^2Z)$ by definition, and the components of $\wedge^2Y$ (and dually $\Lambda^2Y$) follow from the exact sequence $0\to\Gamma^2Y\to Y^{\otimes2}\to\wedge^2Y\to0$.
\end{proof}

\subsubsection{}\label{SecBigrading} Since we have $\gr(Y^{\otimes n})=(X\oplus Z)^{\otimes n}$, there is a bigrading (i.e. $\mN^2$-grading) on
$$\gr TY\coloneqq\bigoplus_{i=1}^n\gr(Y^{\otimes n})=T(X\oplus Z)$$
where permutations of $X^{\otimes i}\otimes Z^{\otimes j}$ have index $(i,j)\in\mN^2$, so the summand $\gr(Y^{\otimes n})\subset\gr TY$ is comprised of the graded components with indices $(n-i,i)$ for $0\leq i\leq n$. The Hopf algebra structure on $T(X\oplus Z)$ respects the bigrading, that is the multiplication and comultiplication restrict to components as
\begin{align*}
T(X\oplus Z)[i,j]&\otimes T(X\oplus Z)[k,l]\xrightarrow{\text{mult.}} T(X\oplus Z)[i+j,k+l],\\
T(X\oplus Z)[i,j]&\xrightarrow{\text{comult.}}\bigoplus_{0\leq k\leq i,0\leq l\leq j}T(X\oplus Z)[i-k,j-l]\otimes T(X\oplus Z)[k,l]
\end{align*}
respectively. Similarly, $\gr\Sym Y\coloneqq\bigoplus_{n=0}^\infty\gr\Sym^nY$ and $\gr\wedge Y\coloneqq\bigoplus_{n=0}^\infty\gr\wedge^nY$ are bigraded with $(i,j)$-component being the $j$-th component of $\gr\Sym^{i+j}Y$ and $\gr\wedge^{i+j}Y$ respectively. They are also bigraded Hopf algebras, since they are quotients of $T(X\oplus Z)$ by the graded versions of the subobjects
$$I_\Lambda=\sum_{i,j}Y^{\otimes i}\otimes\Lambda^2Y\otimes Y^{\otimes j},\quad
I_\Gamma=\sum_{i,j}Y^{\otimes i}\otimes\Gamma^2Y\otimes Y^{\otimes j}$$
respectively, which it can be checked are (co)ideals of $T(X\otimes Y)$. We recall the following lemma from \cite{CEO3}:

\begin{lemma}\label{LemAlgIdeals}\samepage
\cite[Lemma~3.3.3]{CEO3} Suppose $A,B$ are graded coalgebras in $\Ind\cC$ whose counits are projections onto the zeroth components $A[0]\cong\unit\cong B[0]$, meaning $A\cong A\otimes B[0]$ and $B\cong A[0]\otimes B$ are canonically subcoalgebras of $A\otimes B$. If $I\subseteq A\otimes B$ is a bigraded coideal, then
$$I=(I\cap A)\otimes B+A\otimes(B\cap I),\text{ and }(A\otimes B)/I\cong(A/(I\cap A))\otimes(B/(I\cap B))$$
as bigraded coalgebras.
\end{lemma}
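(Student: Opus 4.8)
The plan is to realise $I$ as the kernel of an explicit surjection, which gives both claims at once. Write $C=A\otimes B$ and put $I_A:=I\cap A$ and $I_B:=I\cap B$. Since $I$ is bigraded, $I_A=\bigoplus_i I[i,0]$ and $I_B=\bigoplus_j I[0,j]$ are exactly the parts of $I$ supported on the two coordinate axes; equivalently, they are the images of $I$ under the coalgebra retractions $\id_A\otimes\epsilon_B\colon C\to A$ and $\epsilon_A\otimes\id_B\colon C\to B$, so $I_A$ and $I_B$ are coideals of $A$ and $B$. Hence we have quotient coalgebra morphisms $q_A\colon A\tto A/I_A$ and $q_B\colon B\tto B/I_B$, and, since $\otimes$ is exact, $q_A\otimes q_B\colon C\tto (A/I_A)\otimes(B/I_B)$ is a surjective coalgebra morphism with kernel $I_A\otimes B+A\otimes I_B$. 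It therefore suffices to prove $I=I_A\otimes B+A\otimes I_B$, for the second assertion of the lemma is then the factorisation of $q_A\otimes q_B$ through $C/I$.

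For the inclusion $I\subseteq I_A\otimes B+A\otimes I_B$, fix a bidegree $(m,n)$ and restrict $\Delta_C$ to $C[m,n]$, with target $\bigoplus_{a+c=m,\;b+d=n}C[a,b]\otimes C[c,d]$. Using $A[0]\cong\unit\cong B[0]$ and the counit identities $(\id\otimes\epsilon_A)\Delta_A=\id_A$ and $(\epsilon_B\otimes\id)\Delta_B=\id_B$, one checks that the component of $\Delta_C|_{C[m,n]}$ landing in the summand $C[m,0]\otimes C[0,n]$ equals the canonical isomorphism $A[m]\otimes B[n]\xrightarrow{\sim}(A[m]\otimes B[0])\otimes(A[0]\otimes B[n])$. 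Now apply the coideal condition $\Delta_C(I)\subseteq I\otimes C+C\otimes I$ to $I[m,n]$ and project onto that summand: projecting onto a direct summand sends a sum of subobjects to the sum of their images, and the images of $I\otimes C$ and $C\otimes I$ in that summand are $I[m,0]\otimes C[0,n]$ and $C[m,0]\otimes I[0,n]$. Hence the above isomorphism carries $I[m,n]$ into $(I_A\otimes B+A\otimes I_B)[m,n]$, and letting $(m,n)$ range and using that $I$ is bigraded gives the inclusion.

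The reverse inclusion uses the algebra structure: in the applications $A,B$ are (bi)algebras and $I$ is an ideal of $C$ (such as the $I_\Lambda,I_\Gamma$ above), and then, since $C$ is generated as an algebra by its subalgebras $A\cong A\otimes B[0]$ and $B\cong A[0]\otimes B$ with $(a\otimes1)(1\otimes b)=a\otimes b$, one has $I_A\otimes B=(I_A\otimes\unit)\cdot(\unit\otimes B)\subseteq I\cdot C\subseteq I$ because $I_A\subseteq I$, and symmetrically $A\otimes I_B\subseteq I$. Thus $I=I_A\otimes B+A\otimes I_B=\ker(q_A\otimes q_B)$, so $q_A\otimes q_B$ descends to the asserted isomorphism of bigraded coalgebras $C/I\xrightarrow{\sim}(A/I_A)\otimes(B/I_B)$.

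The one step that is not routine bookkeeping is the identification, in the second paragraph, of the ``corner'' component of $\Delta_C|_{C[m,n]}$ with the canonical isomorphism; I expect that to be the crux, together with checking that $I_A,I_B$ really are the axis-parts of $I$ and that a projection commutes with sums of subobjects. I should also flag that the coalgebra hypothesis alone is not enough for the first equality — one genuinely needs $I$ to be an ideal — so $A,B$ here should be understood as graded bialgebras and $I$ a bigraded biideal, which is the setting of all uses of the lemma.
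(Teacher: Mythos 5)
The paper does not prove this lemma: it is quoted verbatim from \cite[Lemma~3.3.3]{CEO3}, so there is no in-house argument to compare yours against. On its own terms your proof is correct, and the one substantive step is exactly where you locate it: the identification of the component of $\Delta_C|_{C[m,n]}$ landing in $C[m,0]\otimes C[0,n]$ with the canonical isomorphism follows from the counit axioms together with $A[0]\cong\unit\cong B[0]$ (the braiding only ever acts through the unit there), and projecting the coideal condition onto that corner does give $I[m,n]\subseteq I[m,0]\otimes B[n]+A[m]\otimes I[0,n]$. The bookkeeping steps — that $I\cap A$ and $I\cap B$ are the axis parts of the bigraded $I$, that a morphism carries a sum of subobjects to the sum of images, and that $\ker(q_A\otimes q_B)=I_A\otimes B+A\otimes I_B$ by biexactness of $\otimes$ — all check out.

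Your closing caveat is not just caution; it is a genuine correction to the statement as printed. For a bigraded coideal that is not also an ideal the first equality can fail: take $A=B=\bk[x]/(x^2)$ as the graded coalgebra $\unit\oplus\unit$ with primitive degree-one part, and $I=A[1]\otimes B[0]\subseteq C=A\otimes B$. Then $\Delta_C(I)\subseteq I\otimes C+C\otimes I$ and $\epsilon_C(I)=0$, so $I$ is a bigraded coideal, yet $(I\cap A)\otimes B=A[1]\otimes B\supsetneq I$ and $C/I$ has the wrong length. So the reverse inclusion really does require $I$ to be a two-sided ideal, as in your multiplication argument. This is harmless for the paper, since in the only application (Theorem~\ref{ThmGradedAlgs}) $A$ and $B$ are the bialgebras $TX$ and $TZ$ and $I$ is a biideal — which is also the setting of the lemma as actually stated in \cite{CEO3} — but the hypothesis ``coideal'' in the restatement above should read ``ideal and coideal''.
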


Using the bigrading on $T(X\otimes Y)$ and Lemmas~\ref{LemGradedComp} and \ref{LemAlgIdeals}, we can make the following extension of \cite[Theorem~3.3.2]{CEO3}:

\begin{theorem}\label{ThmGradedAlgs}
If $0\to X\to Y\to Z\to0$ is an exact sequence, then the induced graded objects $\gr\Sym Y$ and $\gr\wedge Y$ are both bigraded Hopf algebras given by
$$\gr\Sym Y\cong R_0\otimes\Sym Z,\qquad\qquad\gr\wedge Y\cong R_1\otimes R_2,$$
where $R_0$ is the image of $\Sym X\to\Sym Y$, $R_1$ is the image of $\wedge X\to\wedge Y$ and $R_2$ is the pushout of the quotients $TY\twoheadrightarrow\wedge Y$ and $TY\twoheadrightarrow TZ$. Moreover, for $R\in\{R_0,\Sym Z,R_1,R_2\}$ we have $R[0]=\unit$, $R[1]=X$ or $Z$, and $R[i]=0\implies R[i+1]=0$ for all $i$.
\end{theorem}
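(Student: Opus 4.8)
The plan is to carry out everything inside the bigraded Hopf algebra $\gr TY=T(X\oplus Z)$ of \S\ref{SecBigrading} and to extract both isomorphisms from Lemma~\ref{LemAlgIdeals}, extending \cite[Thm~3.3.2]{CEO3} from $p\neq2$ (where $R_2=\wedge Z$) to all $p$. By construction $\gr\Sym Y$ and $\gr\wedge Y$ are bigraded Hopf algebra quotients of $T(X\oplus Z)$, generated in bidegrees $(1,0)$ and $(0,1)$ by the images of $X$ and $Z$. Moreover $\Sym Y$ is commutative and $\wedge Y$ is graded-commutative in the braided sense (for $p=2$ the latter reduces to ordinary commutativity, since then $\Lambda^2Y\subseteq\Gamma^2Y$ by Remark~\ref{RemSymSplit}), and these properties descend to the associated graded Hopf algebras. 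Lemma~\ref{LemGradedComp} is the degree-two shadow of the statement: it identifies the three graded layers of $\gr\Sym^2Y$ and $\gr\wedge^2Y$, and in particular records that bidegree-$(1,0)$ elements (anti)commute with bidegree-$(0,1)$ ones.

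For $\gr\Sym Y$, I would let $A$ and $B$ be the sums of its bidegree-$(n,0)$ and bidegree-$(0,n)$ homogeneous pieces; these are sub-Hopf-algebras because the comultiplication preserves the bigrading. Reading off the induced filtration on $\Sym^nY=Y^{\otimes n}/\sum Y^{\otimes i}\otimes\Lambda^2Y\otimes Y^{\otimes j}$, its bottom layer is the image of $X^{\otimes n}\hookrightarrow Y^{\otimes n}\tto\Sym^nY$, which factors through $\Sym^nX$, so $A=\im(\Sym X\to\Sym Y)=R_0$; dually the top layer is $Z^{\otimes n}/\sum Z^{\otimes i}\otimes\Lambda^2Z\otimes Z^{\otimes j}=\Sym^nZ$, so $B\cong\Sym Z$. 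Since $\gr\Sym Y$ is commutative and generated by $A\cup B$, the multiplication $R_0\otimes\Sym Z\to\gr\Sym Y$ is a surjective morphism of bigraded Hopf algebras restricting to the canonical inclusion on each of $R_0\otimes\unit$ and $\unit\otimes\Sym Z$; its kernel is therefore a bigraded coideal meeting both tensor factors trivially, so Lemma~\ref{LemAlgIdeals} forces it to vanish and $\gr\Sym Y\cong R_0\otimes\Sym Z$.

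The exterior case is identical with $\Lambda$ replaced by $\Gamma$: the bottom layer of $\gr\wedge^nY$ is $\im(\wedge^nX\to\wedge^nY)$, i.e.\ $R_1$, and by functoriality of $\Gamma^2$ the top layer is $Z^{\otimes n}/\sum Z^{\otimes i}\otimes\im(\Gamma^2Y\to\Gamma^2Z)\otimes Z^{\otimes j}$, which is the degree-$n$ part of the pushout $R_2$. The multiplication $R_1\otimes R_2\to\gr\wedge Y$ — for $p\neq2$ taken with the sign-twisted tensor product of $\mZ/2$-graded Hopf algebras, so as to match the graded-anticommutativity of $\wedge Y$ — is again surjective and restricts to the canonical inclusion on each factor, so Lemma~\ref{LemAlgIdeals}, applied to the underlying bigraded coalgebras, makes it an isomorphism. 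The last clause is then immediate: each of $R_0,\Sym Z,R_1,R_2$ is a quotient of a tensor, symmetric, or exterior algebra, hence a graded algebra generated in degree $1$ with $R[0]=\unit$ and $R[1]\in\{X,Z\}$; thus $R[i+1]$ is a quotient of $R[i]\otimes R[1]$ and vanishes as soon as $R[i]$ does.

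The step I expect to be most delicate is purely bookkeeping: one must verify that forming the associated graded commutes with the sums and iterated tensor products defining $I_\Lambda$ and $I_\Gamma$, so that $\gr I_\Lambda$, $\gr I_\Gamma$, and the individual bidegree components really are what is claimed — this is exactly where the characteristic-$2$ results of \S\ref{SecFiniteP}, packaged in Lemma~\ref{LemGradedComp}, become indispensable rather than a mere convenience. One also has to pin down a convention for the sign-twisted tensor product in the exterior case, so that the isomorphism $\gr\wedge Y\cong R_1\otimes R_2$ holds on the nose as bigraded Hopf algebras and not merely as bigraded coalgebras.
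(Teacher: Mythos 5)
Your argument is correct and matches the paper's proof in all essentials: both work inside the bigraded Hopf algebra $T(X\oplus Z)$, both use Lemma~\ref{LemGradedComp} to identify the edge layers and the bidegree-$(1,1)$ relations, and both conclude with Lemma~\ref{LemAlgIdeals}. The paper's one simplifying difference is that it applies Lemma~\ref{LemAlgIdeals} directly to the defining bigraded coideal of $\gr\wedge Y$ (resp.\ $\gr\Sym Y$) inside $TX\otimes TZ$ — whose conclusion already yields the tensor decomposition of the quotient as a coalgebra — rather than to the kernel of a multiplication map $R_1\otimes R_2\to\gr\wedge Y$, which lets it bypass the surjectivity check and the sign-twist convention you flag as the delicate points at the end.
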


\begin{proof}
Using the middle graded components of $\Lambda^2Y$ and $\Gamma^2Y$ found in Lemma~\ref{LemGradedComp}, we have inclusions of
$$J\coloneqq\sum_{i,j}(X\oplus Z)^{\otimes i}\otimes(X\otimes Z)\otimes(X\oplus Z)^{\otimes j}$$
into $\gr I_\Lambda$ and $\gr I_\Gamma$, each of which expresses $J$ as a (co)ideal of $T(X\oplus Z)$. In both cases the quotient of $T(X\oplus Z)$ by $J$ is the Hopf algebra $TX\otimes TZ$ with bigrading assigning $X^{\otimes i}\otimes Z^{\otimes j}$ the index $(i,j)$, since each quotient identifies $X\otimes Z$ and $Z\otimes X$ and similarly identifies all permutations of $X^{\otimes i}\otimes Z^{\otimes j}$. This means we can express $\gr\Sym Y$ and $\gr\wedge Y$ as quotients of $TX\otimes TY$, and then applying Lemma~\ref{LemAlgIdeals} gives $\gr\wedge Y=R_1\otimes R_2$, where $R_1$ and $R_2$ are graded algebras with components $R_1[n]$ and $R_2[n]$ given by the first and last components of $\gr\wedge^nY$ respectively for each $n\in\mN$. Thus $R_1$ is the image of $TX\to TY\to\wedge Y$ which factors through $\wedge X$, and $R_2$ is the pushout of $\wedge Y$ and $TZ$.

The same applies to the symmetric algebra, except that the pushout of $\Sym Y$ and $TZ$ is exactly $\Sym Z$ since we have the following commutative diagram with exact rows:
$$\begin{tikzcd}
(Y^{\otimes n})^{n-1} \arrow{r}{\bigoplus_{i=1}^{n-1}\id_Y^{\otimes i-1}\otimes(1-c)\otimes\id_Y^{\otimes n-i-1}} \arrow[two heads]{d} &[10em]
Y^{\otimes n} \arrow[two heads]{r} \arrow[two heads]{d} & \Sym^nY \arrow{r} \arrow[two heads]{d} & 0\\
(Z^{\otimes n})^{n-1} \arrow{r}{\bigoplus_{i=1}^{n-1}\id_Z^{\otimes i-1}\otimes(1-c)\otimes\id_Z^{\otimes n-i-1}} &
Z^{\otimes n} \arrow[two heads]{r} & \Sym^nZ \arrow{r} & 0
\end{tikzcd}$$
\end{proof}

\begin{remark}
In Theorem~\ref{ThmGradedAlgs} and Lemma~\ref{LemGradedComp} there is an asymmetry between the behaviours of the symmetric and exterior powers, but this is only relevant in characteristic 2. For $\mathrm{char}(\bk)\neq2$ the splitting in Remark~\ref{RemSymSplit} gives us $R_2=\wedge Z$ and $\Lambda^2Y\cap(X\otimes X)=\Lambda^2X$. A counterexample to both of these in characteristic 2 is to take $Y$ to be the length 2 projective object in $\Ver_4$ (see \cite{BEO}) with composition factors $X=\unit$ and $Z=\unit$.
\end{remark}

\subsection{Objects with finite powers}

\subsubsection{} Motivated by \cite[Proposition~6.6.2]{CE}, for $X\in\cC$ we set
\begin{align*}
m(X)&\coloneqq\sup\{m\in\mN\mid \Sym^m X\not=0\}\;\in\;\mN\cup\{\infty\},\\
n(X)&\coloneqq\sup\{n\in\mN\mid \Lambda^nX\not=0\}\;\in\;\mN\cup\{\infty\},\\
\mathbf{N}(X)&\coloneqq m(X)+n(X)\in\mN\cup\{\infty\}.
\end{align*}

For example, $X=0$ if and only if $\mathbf{N}(X)=0$, and $X$ is invertible if and only if we have $\min(m(X),n(X))=1$. Note that we use $\Lambda$ instead of $\wedge$ to define $n(X)$, and if we replace $\Sym$ by $\Gamma$ and $\Lambda$ by $\wedge$ then we obtain $m(X^*)$ and $n(X^*)$. These are not guaranteed to be equal to $m(X)$ and $n(X)$, as seen in \cite[\S 10.2]{CEO3}.

We can alternately characterise both $m(X)$ and $n(X)$ as follows.

\begin{prop}\label{LemSymInv}
Take $0\not=X\in\cC$ which is not invertible. There is at most one $m\in\mZ_{>0}$ for which $\Sym^mX$ is invertible. If such $m$ exists, then $m=m(X)$; if such $m$ does not exist, then $m(X)=\infty$. Similarly, there is at most one $n\in\mZ_{>0}$ for which $\Lambda^nX$ is invertible. If such $n$ exists, then $n=n(X)$; if such $n$ does not exist, then $n(X)=\infty$.
\end{prop}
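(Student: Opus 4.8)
The plan is to reduce the statement, for both $\Sym$ and $\Lambda$, to two assertions: (B) if $m(X)<\infty$ then $\Sym^{m(X)}X$ is invertible; and (A) if $\Sym^mX$ is invertible for some $m\ge1$ then $m=m(X)$. Granting these, uniqueness and the dichotomy follow at once, once one knows that $\{m:\Sym^mX\ne0\}$ is an initial segment $\{0,1,\dots,m(X)\}$ of $\mN$. This last point is immediate: $\Sym X=\bigoplus_n\Sym^nX$ is a connected graded Hopf algebra in $\Ind\cC$ generated in degree $1$, so the multiplication gives epimorphisms $\Sym^aX\otimes\Sym^bX\tto\Sym^{a+b}X$ and hence $\Sym^aX=0\Rightarrow\Sym^{a+b}X=0$; for the exterior powers one argues dually, or directly from the inclusion $\Lambda^nX\subseteq\Lambda^{n-1}X\otimes X$ obtained by regrouping the intersection defining $\Lambda^nX$. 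Throughout I will give the argument for $\Sym$; the statement for $\Lambda$ follows by applying it to the connected graded Hopf algebra $\wedge X^\ast$, using $\Lambda^nX\cong(\wedge^nX^\ast)^\ast$ and $\dim\wedge^nX^\ast=\dim\Lambda^nX$, so that $\wedge X^\ast$ is finite-dimensional exactly when $n(X)<\infty$, and then has top degree $n(X)$.

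Assertion (B) is the Larson--Sweedler theorem for Hopf algebras in tensor categories. If $m(X)<\infty$ then $\Sym X$ is a finite-length graded connected Hopf algebra in $\cC$, hence Frobenius, so its object of left integrals is invertible. The counit of $\Sym X$ is the projection onto $\Sym^0X=\unit$, so it annihilates every positive-degree element, and consequently every positive-degree element acts by zero on the top component $\Sym^{m(X)}X$; thus $\Sym^{m(X)}X$ is contained in the object of integrals, and being nonzero it must equal it. This already yields one half of the statement: if no positive symmetric power is invertible then $m(X)=\infty$.

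For assertion (A), suppose $\Sym^mX$ is invertible with $m\ge1$; I must show $\Sym^{m+1}X=0$, using that $X$ is not invertible. First reduce to the case $\Lambda^2X\ne0$: if $\Lambda^2X=0$ then the defining ideal of $\Sym X$ is zero, so $\Sym^nX\cong X^{\otimes n}$ for all $n$; then $X^{\otimes m}\cong\Sym^mX$ is invertible, hence simple, and since $\unit\hookrightarrow X^\ast\otimes X$ while $(X^\ast\otimes X)^{\otimes m}\cong\unit$ is invertible and therefore simple, we get $X^\ast\otimes X\cong\unit$, i.e.\ $X$ invertible, a contradiction. So assume $\Lambda^2X\ne0$. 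Splitting off the last tensor factor in the defining exact sequence gives the presentation
\[\Sym^{m+1}X\;\cong\;\bigl(\Sym^mX\otimes X\bigr)\Big/\,(\pi\otimes\id_X)\bigl(X^{\otimes(m-1)}\otimes\Lambda^2X\bigr),\]
where $\pi\colon X^{\otimes m}\tto\Sym^mX$ is the canonical projection. Since $L:=\Sym^mX$ is invertible, $\pi$ is a nonzero symmetric ``$m$-linear form'' with values in $L$, and the target $L\otimes X$ has the same length as $X$; the task is to show that the subobject of $L\otimes X$ spanned by the alternating expressions coming from $(\pi\otimes\id_X)(X^{\otimes(m-1)}\otimes\Lambda^2X)$ is the whole of $L\otimes X$. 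The natural way to organise this is to combine the presentation with the fact that invertibility propagates along multiples of $m$, namely $\Sym^{km}X\in\{0,L^{\otimes k}\}$ for all $k$ (again because $\Sym X$ is generated in degree $1$), which together with (B) forces $m(X)<\infty$ with $\Sym^{m(X)}X$ the integral; then one runs an induction between degrees $m$ and $m(X)$, using the multiplication and comultiplication of $\Sym X$ to identify $\Sym^{m+j}X$ with $L$ tensored by a suitable invertible subquotient of $X^{\otimes j}$, so that at the top degree $X$ is forced to be invertible.

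I expect the main obstacle to be making this propagation rigorous in characteristic $p$: the structure constants that appear (binomial coefficients, the integers $m+1$, $\binom{2m}{m}$, etc.) can vanish modulo $p$, so one cannot simply invert them, and there is in addition the genuinely degenerate possibility that $\dim X=1$ with $X$ not invertible and $\dim\Lambda^2X=0$, which a crude dimension count does not exclude. I would address these either by passing to the Frobenius-twist Hopf subalgebra of $\Sym X$ — reducing (A) to the range $m(X)<p$, where the relevant structure constants are units and the reasoning behind Lemma~\ref{LemExtDims} applies — or by a direct analysis of the symmetric form $\pi$ above, exploiting that $\mathrm{im}(\pi)=L$ is invertible. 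Once (A) and (B) are established for $\Sym$, their analogues for $\wedge X^\ast$, hence for $\Lambda X$, follow by exactly the same arguments, and the proposition results.
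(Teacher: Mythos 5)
Your reduction to the two assertions (B) (``$m(X)<\infty$ implies $\Sym^{m(X)}X$ is invertible'') and (A) (``$\Sym^mX$ invertible implies $m=m(X)$'') matches the logical skeleton of the paper's proof, and your treatment of (B) via the categorical Larson--Sweedler/Frobenius property is essentially the content of the result the paper cites for that half ([CEO3, Lemma 7.4.5]). The initial-segment observation and the passage from $\Sym$ to $\Lambda$ via $\wedge X^\ast$ are also fine.

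The problem is assertion (A), which is the substance of the proposition, and which you do not actually prove. Two concrete issues. First, the inference that $\Sym^{km}X\in\{0,L^{\otimes k}\}$ ``together with (B) forces $m(X)<\infty$'' does not follow: (B) only says something \emph{assuming} $m(X)<\infty$, and the scenario $\Sym^{km}X\cong L^{\otimes k}\neq0$ for all $k$ (i.e.\ $m(X)=\infty$ with an invertible power in degree $m$) is exactly what has to be excluded; nothing in your argument excludes it. Second, the remainder of your argument for (A) is a plan rather than a proof: you present $\Sym^{m+1}X$ as a quotient of $L\otimes X$, state that ``the task is to show'' the relations fill up $L\otimes X$, and then explicitly defer the main difficulties (vanishing structure constants mod $p$, the degenerate $\dim X=1$ case) to one of two unexecuted strategies. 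The paper does not close this gap by elementary means either: for simple $X$ it invokes [CEO3, Corollary 7.4.4], and for non-simple $Y$ it runs a genuinely different argument through the filtration machinery of Theorem~\ref{ThmGradedAlgs} --- writing $\gr\wedge^nY\cong(R_1\otimes R_2)[n]$ for graded algebras $R_1,R_2$ satisfying $R_j[i]=0\Rightarrow R_j[i+1]=0$, noting that invertibility (hence simplicity) of $\wedge^nY$ forces all but one bigraded component to vanish, and deducing $(R_1\otimes R_2)[n+1]=0$. That filtration argument, or the cited external result for the simple case, is the missing ingredient; as written, your proposal establishes only the easy half of the proposition.
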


Proposition~\ref{LemSymInv} is proven in \cite{CE} for symmetric powers, and it is observed that the same result holds for exterior powers by considering $X^\ast\boxtimes\bar{\unit}$ in $\cC\boxtimes\sVec$. However this assumes $p\geq3$, so we will provide a more careful justification below which also applies when $p=2$.

\begin{proof}[Proof of Proposition~\ref{LemSymInv}]
That $\Sym X$ finite implies $\Sym^nX$ is invertible for the maximal $n$ with $\Sym^nX\neq0$ is proven in \cite[Lemma 7.4.5]{CEO3} and the converse is proven in \cite[Corollary 7.4.4]{CEO3} for $X$ simple, and both of these results immediately apply to $\Lambda X$ as well. The latter result is extended to non-simple objects in \cite[Lemma 6.1.2]{CE}, and we restate this proof and generalise it to apply to $\wedge X$.

Suppose $Y$ is non-simple, so there is an exact sequence $0\to X\to Y\to Z\to 0$. Since invertible objects are simple, if $\wedge^nY$ is invertible then exactly one graded component of $\gr\wedge^nY=R_1\otimes R_2$ is non-zero. We have $R_1[0]\cong R_2[0]\cong\unit$, $R_1[1]\cong X$ and $R_2[1]\cong Z$, and using the fact that $\wedge^{n+1}Y$ is the pushout of $\wedge^nY\otimes Y$ and $Y\otimes\wedge^nY$ we have $R_j[i]=0\implies R_j[i+1]=0$ for all $i\in\mN$, $j\in\{1,2\}$. Consequently, if
$$\gr\wedge^nY=(R_1\otimes R_2)[n]=\bigoplus_{i=0}^nR_1[i]\otimes R_2[n-i]$$
has only one non-zero component then there must be some $1\leq j\leq n-1$ with $R_1[i]=0\iff i>j$ and $R_2[i]=0\iff i>n-j$. This implies $(R_1\otimes R_2)[n+1]=0$, so $\gr\wedge^{n+1}Y=0$ and hence $\wedge^{n+1}Y=0$. The same argument applies to $\Sym Y$, and by considering $\wedge(Y^*)$ it also applies to the powers $\Lambda^nY$.
\end{proof}

We also state the following two lemmas which will be useful throughout this paper. 

\begin{lemma}[Lemma 6.2.6 in \cite{CE}]\label{LemDim}
For every $0\not=X$ we have $\mathbf{N}(X)\ge p$. In fact, $\mathbf{N}(X)$ is a multiple of $p$.
\end{lemma}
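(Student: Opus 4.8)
The plan is to pass to the "associated graded" Hopf algebra in $\Ver_p$ (or rather the generic fibre / a symmetric-tensor-category analogue of a fibre functor) and read off the vanishing pattern of the symmetric and exterior powers from the Hilbert--Poincaré series of a finite-dimensional graded commutative Hopf algebra over a field of characteristic $p$. First I would reduce to the case where $X$ is nonzero and $\mathbf{N}(X)<\infty$, so that both $m=m(X)$ and $n=n(X)$ are finite; otherwise there is nothing to prove. By Proposition~\ref{LemSymInv}, $\Sym^m X$ and $\Lambda^n X$ are then invertible objects $L$ and $M$. The key structural input is that $\Sym X$ and $\wedge X$ are (bi)graded Hopf algebras in $\Ind\cC$ (as recorded in Section~\ref{SecFiltObjects}), finite in the sense that only finitely many graded pieces are nonzero, and that the top pieces are invertible.

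The heart of the argument is a Poincaré-duality statement for such finite graded Hopf algebras. I would argue that for a connected graded commutative Hopf algebra $H=\bigoplus_{i=0}^N H[i]$ in $\cC$ with $H[0]=\unit$, $H[N]$ invertible, and $H[i]=0$ for $i>N$, multiplication induces a perfect pairing $H[i]\otimes H[N-i]\to H[N]$; in particular $H[i]\neq0$ for all $0\le i\le N$ and $\dim H[i]=\dim H[N-i]$ (dimensions taken after applying an exact faithful $\bk$-linear functor to $\Vecc$, as is done elsewhere in the paper). Granting this, apply it to $H=\Sym X$ with top degree $m$ and to $H=\wedge X$ with top degree $n$: this gives $\dim\Sym^i X=\dim\Sym^{m-i}X$ and $\dim\wedge^iX=\dim\wedge^{n-i}X$. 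Now compute Euler characteristics: because of the Koszul-type relation between $\Sym$ and $\wedge$ (the complex $\cdots\to\wedge^2X\otimes\Sym^{k-2}X\to X\otimes\Sym^{k-1}X\to\Sym^kX\to0$, or dually), one has $\sum_{i\ge0}(-1)^i\dim\wedge^iX\cdot\dim\Sym^{k-i}X=0$ for $k\ge1$; equivalently the formal power series $\big(\sum_i \dim\wedge^iX\,t^i\big)\cdot\big(\sum_i(-1)^i\dim\Sym^iX\,t^i\big)=1$. Writing $f(t)=\sum_{i=0}^n d_i t^i$ with $d_i=\dim\wedge^iX$ and $g(t)=\sum_{i=0}^m e_i(-t)^i$ with $e_i=\dim\Sym^iX$, the symmetry relations say $f$ is a "palindromic" polynomial of degree $n$ (with $d_n=\dim M$, a unit in $\bk$) and $g$ is, up to the sign twist, palindromic of degree $m$. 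Since $f(t)g(t)\equiv1$, evaluating and comparing leading behaviour, the product $fg$ is a polynomial identically $1$, forcing $f$ and $g$ to be units in $\bk[t]$ only if $m=n=0$; the correct statement is rather that $f(t)g(t)=1$ as power series with $f,g$ polynomials forces $f$ to divide $1$, impossible unless we work in $\bk[[t]]$ — so instead I extract: $d_0=e_0=1$, and reading off the top coefficient, the relation $f(t)=1/g(t)$ combined with the palindrome symmetry $t^nf(1/t)=d_n f(t)$ and the analogous one for $g$ yields $d_n=\pm g(\text{something})$, and ultimately a divisibility forcing $p\mid m+n$. I will make this precise by evaluating at $t=1$: $f(1)g(1)=1$ in $\bk$, while palindromicity plus the recursion $R[i]=0\Rightarrow R[i+1]=0$ shows all $d_i,e_i$ are positive integers with $d_0=d_n$, $e_0=e_m$; the cleaner route, which I would actually use, is the one already indicated in the paper's references — apply $\Sym$-finiteness of the Hopf algebra together with the fact (Lemma~\ref{LemExtDims}) that for $j<p$ the pieces $\Sym^jX$, $\wedge^jX$ are given by images of symmetrizers/antisymmetrizers with binomial dimensions ${\dim X+j-1\choose j}$, ${\dim X\choose j}$, so that if $m+n$ were $<p$, or $\not\equiv0\pmod p$, the binomial coefficient ${\dim X \choose n+1}$ or the $\Sym$-analogue would be forced nonzero in $\bk$, contradicting the vanishing $\wedge^{n+1}X=0=\Sym^{m+1}X$; here $\dim X\in\bk$ and one uses that $\dim X$ satisfies $\binom{\dim X}{n+1}=0$ and $\binom{\dim X + m}{m+1}=0$ in $\bk$, i.e. $\dim X\in\{0,1,\dots,n\}\bmod p$ and $-\dim X\in\{1,\dots,m+1\}\bmod p$ — wait, this needs $\dim X$ to be an integer mod $p$, which follows since $\dim X$ is a nonnegative integer by the existence of a fibre functor to $\Vecc$.

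Concretely the clean argument is: choose an exact faithful $\bk$-linear functor $\cC\to\Vecc$; then $d:=\dim X\in\mZ_{\ge0}$, and $\dim\wedge^jX=\binom{d}{j}$, $\dim\Sym^jX=\binom{d+j-1}{j}$ for all $j<p$ by Lemma~\ref{LemExtDims} — but these formulas need not persist for $j\ge p$. So instead I would run the argument with the \emph{Frobenius-twisted} count or, more robustly, invoke the known result \cite[Proposition~6.6.2]{CE} / \cite[Lemma~6.2.6]{CE} structure: the generating function identity $f(t)g(t)=1$ holds in $\bk[[t]]$ with $f$ a polynomial of degree $n$ and $g^{-1}$... — \textbf{the main obstacle} is precisely making the passage from the power-series identity to the divisibility $p\mid m+n$ rigorous in characteristic $p$, since over $\mathbb{Q}$ the identity $\prod(1-t)^{?}$ style factorization that gives $\binom{d}{j}$ and $\binom{d+j-1}{j}$ would force $f(t)=(1+t)^d$, $g(t)^{-1}=(1-t)^{-d}$, hence $n=d$ and $m(X)=\infty$ unless $d$ is trapped by a characteristic-$p$ vanishing — and the honest resolution is: since $\Lambda^{n+1}X=0$ we get $\binom{d}{n+1}\equiv0\pmod p$ but $\binom{d}{n}\not\equiv0$, and since $\Sym^{m+1}X=0$ we get $\binom{d+m}{m+1}\equiv0$ but $\binom{d+m-1}{m}\not\equiv0$; by Lucas' theorem the first pair forces, writing things base $p$, that $n$ is "the last digit block below $d$", and the second similarly, and the two together force $d+m\equiv -1$... leading to $m+n\equiv -1 - ? $; carefully, $\binom{d}{n+1}=0,\binom{d}{n}\neq0$ in $\mF_p$ with $0\le n<$ forces $d\bmod p^{k}$ to have a specific form, and combined with the $\Sym$ condition $\binom{-d-1}{m+1}=\pm\binom{d+m}{m+1}$ one extracts $p\mid m+n$. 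I would present this Lucas-theorem bookkeeping as the core computation, noting it is exactly the content of \cite[Lemma~6.2.6]{CE}, and for the refinement "$\mathbf N(X)$ is a multiple of $p$" (not merely $\ge p$) the same base-$p$ digit analysis applies verbatim once one knows it for the top digit. Since the statement is explicitly quoted as \cite[Lemma~6.2.6]{CE}, the expected proof in the paper is short: cite that lemma, or reprove via the Hopf-algebra Poincaré duality of Theorem~\ref{ThmGradedAlgs} plus Lucas.
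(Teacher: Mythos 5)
The paper does not prove this lemma; it is imported verbatim from \cite[Lemma~6.2.6]{CE}, so your closing guess about what the paper does is correct. Your attempted self-contained reconstruction, however, has a genuine gap at exactly the point you flag as ``the main obstacle''. The formulas $\dim\wedge^jX=\binom{\dim X}{j}$ and $\dim\Sym^jX=\binom{\dim X+j-1}{j}$ from Lemma~\ref{LemExtDims} are identities in $\bk$ for the \emph{categorical} dimension, and they are only established for $j<p$; the hypotheses $\Lambda^{n+1}X=0=\Sym^{m+1}X$ live in degrees that may far exceed $p$, so they cannot be fed into these formulas directly. Your proposed fix --- that $\dim X$ is a nonnegative integer ``by the existence of a fibre functor to $\Vecc$'' --- does not work: a general tensor category over $\bk$ admits no symmetric monoidal functor to $\Vecc$ (that would make it Tannakian), and the exact faithful $\bk$-linear functors used elsewhere in the paper are not monoidal, so they do not compute categorical dimensions. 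Consequently the Lucas-theorem bookkeeping has no well-defined integer $d$ to operate on, and the argument does not close.

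The correct bridge, which is what \cite{CE} actually uses and what this paper itself deploys in the proof of Proposition~\ref{PropKoszul}, is the $p$-adic dimension theory of \cite{EHO}: $\Sym^{m+1}X=0$ forces $\Dim_+(X)=-m$ in $\mZ_p$, $\Lambda^{n+1}X=0$ forces $\Dim_-(X^\ast)=n$, and since both $p$-adic dimensions reduce modulo $p$ to the categorical dimension of $X$, one gets $-m\equiv n\pmod p$, i.e.\ $p\mid m+n$; as $m,n\ge1$ whenever $X\ne0$ and $\mathbf N(X)<\infty$, this also yields $\mathbf N(X)\ge p$. Your Poincar\'e-duality and generating-function detours are not wrong in spirit (the identity $\bigl(\sum_i\dim\wedge^iX\,t^i\bigr)\bigl(\sum_i(-1)^i\dim\Sym^iX\,t^i\bigr)=1$ is exactly the Euler-characteristic computation appearing in Proposition~\ref{PropKoszul}), but neither is carried to a conclusion, and neither substitutes for the $p$-adic lift of $\dim X$.
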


\begin{lemma}\label{LemEven}
If $X$ is not invertible and $n=n(X)$ satisfies $2\leq n\leq p-2$, then the invertible object $\Lambda^nX$ is even.
\end{lemma}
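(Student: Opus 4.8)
The plan is to reduce to a statement about categorical dimensions and then apply Lemma~\ref{LemInvDim}. By that lemma (valid since $p \geq 5 \neq 2$ in all cases of interest, but let me phrase it for general $p \geq 3$), the invertible object $\Lambda^n X$ is even if and only if $\dim(\Lambda^n X) = 1$, so it suffices to compute $\dim(\Lambda^n X)$. Set $d = \dim(X) \in \bk$. Since $2 \leq n \leq p-2$, Lemma~\ref{LemExtDims} applies both to $\Lambda^n X$ and to $\Sym^n X$ (note $n < p$ and also $n+1 \leq p-1 < p$, so the exterior and symmetric powers agree with the images of the (anti)symmetrizers and the dimension formulas hold). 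Thus
\[
\dim(\Lambda^n X) = \binom{d}{n}, \qquad \dim(\Sym^n X) = \binom{d+n-1}{n},
\]
interpreted as the usual polynomial expressions in $d$ evaluated in $\bk$.

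The key input is that $\Lambda^{n+1} X = 0$: since $n = n(X)$, and $n+1 \leq p-1 < p$, Lemma~\ref{LemExtDims} gives $\dim(\Lambda^{n+1} X) = \binom{d}{n+1} = 0$ in $\bk$. Now $\binom{d}{n+1} = \binom{d}{n} \cdot \frac{d-n}{n+1}$, and since $n+1 < p$ the factor $n+1$ is invertible in $\bk$; hence $\binom{d}{n}(d - n) = 0$ in $\bk$. If $d = n$ in $\bk$, then $\dim(X) = n$ with $2 \leq n \leq p-2$; I would argue this forces $X$ to behave like an ordinary vector space of dimension $n$ so that $\Lambda^n X$ is the top exterior power of dimension $1$ — more precisely, one can invoke that $\binom{d}{n} = \binom{n}{n} = 1$ directly, so $\dim(\Lambda^n X) = 1$ and we are done. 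Otherwise $d \neq n$ in $\bk$, so $d - n$ is invertible and we must have $\dim(\Lambda^n X) = \binom{d}{n} = 0$ in $\bk$ — but this contradicts the fact that $\Lambda^n X$ is invertible (hence nonzero) with $\dim(\Lambda^n X) = \pm 1 \neq 0$. Therefore the only possibility is $\dim(\Lambda^n X) = 1$, so $\Lambda^n X$ is even.

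The main obstacle I anticipate is the bookkeeping around evaluating binomial coefficients in characteristic $p$ and making sure the ranges of $n$ are exactly right for Lemma~\ref{LemExtDims} to apply (we need $n < p$ for the power itself, and $n+1 < p$ for the vanishing one-step-up, which is guaranteed by $n \leq p-2$). A secondary subtlety is the case $d \equiv n \pmod p$: rather than the hand-waving above, the clean argument is that $\dim(\Lambda^n X) = \binom{d}{n}$ and $\dim(\Lambda^{n+1}X) = \binom{d}{n+1} = 0$ together with $\binom{d}{n+1}(n+1) = \binom{d}{n}(d-n)$ in $\bk$; since $\Lambda^n X$ is invertible its dimension is a unit, so from $\binom{d}{n}(d-n) = 0$ we get $d - n = 0$ in $\bk$, and then substituting back, $\binom{d}{n}$ is congruent mod $p$ to $\binom{n}{n} = 1$ — wait, this last step needs that $\binom{d}{n} \bmod p$ depends only on $d \bmod p$ when $0 \le n < p$, which is Lucas's theorem (or a direct check: the polynomial $\binom{d}{n} = \frac{d(d-1)\cdots(d-n+1)}{n!}$ reduced mod $p$, evaluated at $d = n$, gives $1$). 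So $\dim(\Lambda^n X) = 1$ and Lemma~\ref{LemInvDim} finishes the proof.
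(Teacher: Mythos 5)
Your proof is correct and is essentially identical to the paper's: both compute $\dim\Lambda^nX=\binom{d}{n}$ and $\dim\Lambda^{n+1}X=\binom{d}{n+1}=0$ via Lemma~\ref{LemExtDims}, use the nonvanishing of $\dim\Lambda^nX$ (invertibility) to force $d=n$ in $\bk$, and then evaluate $\binom{n}{n}=1$ and conclude by Lemma~\ref{LemInvDim}. The only difference is that your final paragraph's clean version supersedes the tentative case split in the middle; the paper goes directly to that clean argument.
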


\begin{proof}
By Lemma~\ref{LemExtDims}, we have
\begin{align*}
\dim\Lambda^nX&=d(d-1)\cdots (d-n+1)/n!\neq0,\\
\dim\Lambda^{n+1}X&=d(d-1)\cdots(d-n)/(n+1)!=0
\end{align*}
where $d=\dim X$. This is only possible if $d=n$, meaning $\dim\Lambda^nX=n!/n!=1$ giving the result by Lemma~\ref{LemInvDim}. 
\end{proof}

\section{Decomposition of $\Ver_p(G)$}\label{SecVerpG}

Let $G$ be a simple connected algebraic group over an algebraically closed field $\bk$ with characteristic $p>0$. Throughout this section we assume that $p>h$, where $h$ is the Coxeter number of $G$ as defined in \ref{SecGroups}.

\subsection{Notation for algebraic groups}\label{SecGroups}

\subsubsection{}Fix a maximal torus $T$ of $G$, giving a weight lattice $X(T)$. Denote by $\Phi$ the set of roots and $\mZ\Phi\subseteq X(T)$ the root lattice. Let $r$ be the rank of $G$, and fix a basis of simple roots $\alpha_1,\dots,\alpha_r$ with corresponding set of positive roots $\Phi^+$. Denote by $X(T)^+$ the set of dominant weights with respect to $\Phi^+$, and $\varpi_1,\dots,\varpi_r\in X(T)^+$ the fundamental weights. We write $\rho$ for the sum of the fundamental weights, or equivalently half the sum of the positive roots. Let $(\cdot,\cdot)$ be the bilinear form on $X(T)$ coming from the Killing form, and denote $\langle\lambda,\alpha^\vee\rangle=2(\lambda,\alpha)/(\alpha,\alpha)\in\mZ$ for $\lambda\in X(T)$ and $\alpha\in\Phi$, so that $\langle\varpi_i,\alpha_j^\vee\rangle=\delta_{ij}$. We write $\theta_l$ for the highest (long) root and $\theta_s$ for the highest short root. The Coxeter number is $h=1+\sum_{i=1}^r k_i$ where the integers $k_i$ are such that $\theta_l=\sum_{i=1}^r k_i\alpha_i$. We write $\Rep G$ for the category of finite-dimensional algebraic $G$-representations, and $L(\lambda)$ for the simple representation with highest weight $\lambda\in X(T)^+$.

\subsubsection{}\label{SecWeightsA} Denote by $W$ the Weyl group consisting of automorphisms $X(T)\to X(T)$ generated by reflections $s_\alpha$ for $\alpha\in\Phi$. We define the ($p$-scaled) affine Weyl group $\widetilde W_p\cong\mZ\Phi\rtimes W$ to be the group of automorphisms of $X(T)$ generated by the shifted reflections $s_{\alpha,kp}(\lambda)\coloneqq s_\alpha(\lambda)+kp\alpha$ for $k\in\mZ$, and we define the extended affine Weyl group $\widehat W_p\cong X(T)\rtimes W$ to be the group generated by $s_{\alpha,p\mu}(\lambda)\coloneqq s_\alpha(\lambda)+p\mu$ for $\mu\in X(T)$ (see e.g. \cite{Ga}, \cite[IV \S 2]{Bo}). The ``dot action'' of $w$ in $W$, $\widetilde W_p$ or $\widehat W_p$ is defined as $w\cdot\lambda=w(\lambda+\rho)-\rho$. Let
$$A=\{\lambda\in X(T)\mid0<\langle\lambda+\rho,\alpha^\vee\rangle<p,\ \forall\alpha\in\Phi^+\}$$ be the set of weights in the open fundamental alcove of the dot action of $\widetilde W_p$. The fundamental alcove is a simplex with the far wall given by the hyperplane $\langle\lambda+\rho,\theta_s^\vee\rangle=p$ where $\theta_s$ is highest short root, and the other walls restricting $\lambda\in A$ to $X(T)^+$, see \cite[V \S 3]{Bo}. We recall that $\theta_s^\vee$ is the highest (long) root of the dual root system $\Phi^\vee$ (see e.g. \cite[Ex 13.6]{Hu}), and since $\Phi^\vee$ has the same Coxeter number as $\Phi$, we have $h=1+\sum_i k_i'$ where $\theta_s^\vee=\sum_{i=1}^rk_i'\alpha_i^\vee$. By linearity of $\langle\cdot,\cdot\rangle$, we have
$$\langle\rho,\theta_s^\vee\rangle=\sum_{i=1}^rk_i'\langle\rho,\alpha_i^\vee\rangle=\sum_{i=1}^rk_i'=h-1$$
and hence we can rewrite $A$ as
$$A=\{\lambda\in X(T)^+\mid\langle\lambda,\theta_s^\vee\rangle\leq p-h\}.$$

\subsubsection{}\label{SecZG} By the description of the centre $Z(G)$ of $G$ as the common kernel in $T$ of all coroots, see \cite[II.1.6]{Jantzen}, it follows that $Z(G)$ is the diagonalisable group scheme corresponding to the abelian group $H\coloneqq X(T)/\mZ\Phi$ (so $\bk[Z(G)]$ is isomorphic as a Hopf algebra to the group algebra $\bk H$). In particular the representation category of $Z(G)$ is the category of $X(T)/\mZ\Phi$-graded vector spaces, $\Vecc_{X(T)/\mZ\Phi}$. We also recall that $X(T)/\mZ\Phi$ is in bijection with the set of minuscule weights of $G$, which are weights $\varpi\in X(T)^+$ satisfying any of the following equivalent conditions as described in \cite[VIII \S 7.3]{Bo}:
\begin{enumerate}\samepage
\item All weights of $L(\varpi)$ are in a single $W$-orbit;
\item $\varpi$ satisfies $\langle\varpi,\alpha^\vee\rangle\leq1$ for all $\alpha\in\Phi^+$;
\item $\varpi$ is either zero or satisfies $\langle\varpi,\theta_s^\vee\rangle=1$.
\end{enumerate}
Every non-zero minuscule weight is a fundamental weight, and if $\varpi$ is minuscule then all weight spaces of $L(\varpi)$ are 1-dimensional.

\subsubsection{Table of simple groups}\label{TabGroups} Each Cartan type has a unique simply-connected group (i.e. the simple group with maximal centre) and adjoint-type group (i.e. the simple group with trivial centre), and we list these along with some labelling conventions below.

\vspace{0.5em}{\noindent
\begin{tabular}{r|l|l|l|l|l|l}
Type & Rank & Diagram & $h$ & Simply-con. & Adjoint & Centre of simply-con. \\\hline
$A_r$ & $r\geq1$ & \dynkin[labels={1,2,,r}]A{} & $r+1$ & $\SL_{r+1}$ & $\PGL_{r+1}$ & $\mZ/(r+1)$ \\\hline
$B_r$ & $r\geq2$ & \dynkin[labels={1,2,,r\scalebox{0.6}[1.0]{$-$}1,r}]B{} & $2r$ & $\Spin_{2r+1}$ & $\SO_{2r+1}$ & $\mZ/2$\\\hline
$C_r$ & $r\geq2$ & \dynkin[labels={1,2,,r\scalebox{0.6}[1.0]{$-$}1,r}]C{} & $2r$ & $\Sp_{2r}$ & $\Sp_{2r}^\ad$ & $\mZ/2$\\\hline
$D_r$ & $r\geq4$ & \dynkin[labels={1,2,,r\scalebox{0.6}[1.0]{$-$}2,r\scalebox{0.6}[1.0]{$-$}1,r},label directions={,,,right,,}]D{} & $2r-2$ & $\Spin_{2r}$ & $\PSO_{2r}$ & $\begin{cases}\mZ/2\times\mZ/2 &r\text{ even}\\\mZ/4 &r\text{ odd}\end{cases}$ \\\hline
$E_6$ & 6 & \dynkin[labels={5,6,4,3,2,1}]E6 & 12 & $E_6$ & $E_6^\ad$ & $\mZ/3$ \\\hline
$E_7$ & 7 & \dynkin[labels={6,7,5,4,3,2,1}]E7 & 18 & $E_7$ & $E_7^\ad$ & $\mZ/2$ \\\hline
$E_8$ & 8 & \dynkin[labels={7,8,6,5,4,3,2,1}]E8 & 30 & $E_8$ & $E_8$ & $1$ \\\hline
$F_4$ & 4 & \dynkin[labels={4,3,2,1}]F4 & 12 & $F_4$ & $F_4$ & $1$ \\\hline
$G_2$ & 2 & \dynkin[labels={2,1}]G2 & 6 & $G_2$ & $G_2$ & $1$ \\\hline
\end{tabular}}\vspace{0.5em}

\subsubsection{} Denote by $\Ver_p(G)$ the semisimplification of the category $\Tilt G$ of tilting modules of~$G$. We write $\Ver_p\coloneqq\Ver_p(\SL_2)$ and $\Ver_p^+\coloneqq\Ver_p(\PGL_2)$, and abbreviate objects $L(k\varpi_1)$ as $L_k$ in $\Tilt\SL_2$ and $\Ver_p$ (beware that in some papers $L(k\varpi_1)$ is written as $L_{k+1}$ instead, e.g. \cite{CEO2, Os}). The tensor categories $\Ver_p(G)$ were introduced in \cite{GK} (under the name $\mathbf{Sm}$, referring to ``small'' objects in $\Rep G$) for simply-connected groups via an alternate definition using a principal $\SL_2$. The connection to tilting modules was later observed in \cite{GM}. We refer to \cite[Appendix~E]{Jantzen} for more information on tilting modules.

\subsection{Images of representations}\label{SecImages} For $G$ simply-connected, we recall from \cite{GK} that a principal $\SL_2$ is a group morphism $\SL_2\to G$ sending non-trivial unipotent elements to principal unipotent elements. This is unique up to conjugacy, and one such choice is given on the level of Lie algebras by sending the single coroot of $\mathfrak{sl}_2$ to the sum of the positive coroots of ${\rm Lie}(G)$. The following proposition follows from the description of $\Ver_p(G)$ in \cite{GK} and its relation to a principal $\SL_2$.

\begin{prop}\label{PropVerBasics}
\begin{enumerate}
\item The simple objects of $\Ver_p(G)$ are precisely the images of the objects $L(\lambda)\in\Tilt G$ for $\lambda\in A$, with the set $A$ defined in Section~\ref{SecWeightsA}.
\item For a central quotient $G'=G/H$ (so $H\subseteq Z(G)$ is necessarily finite), the inclusion functor $\Rep(G')\to\Rep(G)$ induces a commutative diagram of symmetric monoidal functors:
$$\begin{tikzcd}
\Rep(G') \arrow{d} &[3em] \Tilt(G') \arrow{d} \arrow[swap]{l}{\rm inclusion} \arrow{r}{\rm semisimplification} &[4em] \Ver_p(G') \arrow{d} \\
\Rep(G) & \Tilt(G) \arrow[swap]{l}{\rm inclusion} \arrow{r}{\rm semisimplification} & \Ver_p(G).
\end{tikzcd}$$
\item For a principal $\SL_2\to G$, the restriction functor $\Rep(G)\to\Rep(\SL_2)$ induces a commutative diagram of symmetric monoidal functors:
$$\begin{tikzcd}
\Rep(G) \arrow{d} &[3em] \Tilt(G) \arrow{d} \arrow[swap]{l}{\rm inclusion} \arrow{r}{\rm semisimplification} &[4em] \Ver_p(G) \arrow{d} \\
\Rep(\SL_2) & \Tilt(\SL_2) \arrow[swap]{l}{\rm inclusion} \arrow{r}{\rm semisimplification} & \Ver_p.
\end{tikzcd}$$
The functor $\Rep(G)\to\Rep(\SL_2)$ corresponds to the map
$$X(T)\to\mZ,\quad\lambda\mapsto\sum_{\alpha\in\Phi^+}\langle\lambda,\alpha^\vee\rangle$$
on the weight spaces of $G$ and $\SL_2$, where we identify the latter space with $\mZ$.
\end{enumerate}
\end{prop}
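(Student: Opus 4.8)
The plan is to derive all three parts from the construction of $\Ver_p(G)$ as the semisimplification of $\Tilt G$, together with the general formalism of semisimplification of additive monoidal categories and the classical highest-weight theory of tilting modules under $p > h$.

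For part (1), I would first recall that the indecomposable tilting modules $T(\lambda)$ of $G$ are indexed by $\lambda \in X(T)^+$, and that the semisimplification functor kills precisely the negligible indecomposables — those whose categorical (quantum) dimension vanishes. Since $p > h$, the linkage principle and the strong linkage/Jantzen sum formula give $T(\lambda) = L(\lambda)$ exactly when $\lambda$ lies in the closure of the fundamental alcove, and the categorical dimension of $T(\lambda)$ is nonzero exactly when $\lambda \in A$ (the open alcove), by the Weyl dimension formula reduced mod $p$: $\dim L(\lambda) = \prod_{\alpha \in \Phi^+} \langle \lambda+\rho,\alpha^\vee\rangle / \langle \rho,\alpha^\vee\rangle$, and the numerator is nonzero mod $p$ iff $0 < \langle \lambda+\rho,\alpha^\vee\rangle < p$ for all $\alpha \in \Phi^+$ (using $\langle \lambda+\rho,\alpha^\vee\rangle \le \langle \lambda+\rho,\theta_s^\vee\rangle < p$ for the far wall, with equality to the alcove description $\langle\lambda,\theta_s^\vee\rangle \le p-h$ from Section~\ref{SecWeightsA}). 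Hence the nonzero images of simple tilting modules under semisimplification are exactly the $L(\lambda)$, $\lambda \in A$, and these remain simple and pairwise non-isomorphic in $\Ver_p(G)$; that they exhaust the simple objects follows because every object of $\Tilt G$ is a sum of $T(\mu)$'s, each of which either becomes negligible (hence zero in $\Ver_p(G)$) or, by the block structure, has the same image as some $L(\lambda)$ with $\lambda\in A$.

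For parts (2) and (3), the key point is functoriality of semisimplification: any additive symmetric monoidal $\bk$-linear functor $F : \cA \to \cB$ between Karoubian monoidal categories sends negligible morphisms to negligible morphisms, hence induces $\bar F : \cA^{ss} \to \cB^{ss}$ making the square with the semisimplification functors commute. Applied to the restriction-type functors $\Rep(G') \to \Rep(G)$ (along $G \twoheadrightarrow G'$, which preserves tilting modules since it is exact and a monoidal equivalence onto a full subcategory closed under the relevant operations) and $\Rep(G) \to \Rep(\SL_2)$ (restriction along a principal $\SL_2$, which sends tilting modules to tilting modules because principal $\SL_2$ is "good" — this is exactly the input from \cite{GK}), we get the two commuting diagrams. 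The final formula for the weight-space map is the statement that a principal $\SL_2 \to G$ is, on the torus, the cocharacter $2\rho^\vee = \sum_{\alpha\in\Phi^+}\alpha^\vee$ of $T$; pairing a weight $\lambda \in X(T)$ against this cocharacter gives $\sum_{\alpha\in\Phi^+}\langle\lambda,\alpha^\vee\rangle$, which is the weight of the restricted representation as an $\SL_2$-weight (identified with $\mZ$). This is immediate from the definition of principal $\SL_2$ recalled just before the proposition.

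The main obstacle is part (1): one must be careful that semisimplification does not merely quotient morphism spaces but can also identify objects up to negligible summands, so one needs the block/linkage structure at $p > h$ to conclude that no $T(\mu)$ with $\mu \notin A$ contributes a new simple — equivalently, that the "tensor ideal of negligibles" in $\Tilt G$ contains $T(\mu)$ for all dominant $\mu$ outside the open alcove. For $\mu$ on an alcove wall this is the vanishing of $\dim L(\mu)$; for $\mu$ strictly outside, one uses that $T(\mu)$ has a Weyl filtration all of whose standard factors $\Delta(\nu)$ have $\dim \Delta(\nu) = \dim L(\nu)$ summing to $\dim T(\mu)$, combined with the sum formula to see the dimension is $0$ mod $p$ — this is a known fact about $\Ver_p(G)$ (see \cite{GK, GM}) but deserves an explicit citation. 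Once this is in hand, parts (2) and (3) are formal, modulo citing \cite{GK} for the fact that principal $\SL_2$ restriction preserves tilting modules.
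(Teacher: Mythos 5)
Your proposal is correct and takes essentially the same route as the paper, which offers no proof of this proposition beyond the citation to \cite{GK}: your three ingredients (negligibility of $T(\mu)$ exactly for dominant $\mu\notin A$ via categorical dimensions, functoriality of semisimplification along symmetric monoidal functors that preserve tilting modules, and the identification of the principal $\SL_2$ on tori with the cocharacter $2\rho^\vee=\sum_{\alpha\in\Phi^+}\alpha^\vee$) are precisely what that citation encodes. The only misstatement is inside the step you rightly defer to \cite{GK, GM}: for $\mu$ strictly outside the closed alcove the Weyl factors $\Delta(\nu)$ of $T(\mu)$ do not satisfy $\dim\Delta(\nu)=\dim L(\nu)$; the correct mechanism is that these factors pair off across alcove walls so that their Weyl-formula dimensions sum to $0$ modulo $p$, exactly as described for $\SL_2$ in Section~\ref{SecWeylFactors}.
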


The Lie algebra ${\rm Lie}(G)$ equipped with the adjoint action is a simple $G$-module with highest weight given by the highest (long) root $\theta_l$, and hence is isomorphic to $L(\theta_l)$ as a $G$-module.

\begin{lemma}\label{LemAdjointTilting}
$L(\theta_l)$ is a tilting module, and the image of $L(\theta_l)$ in $\Ver_p(G)$ is zero if and only if $p=h+1$. Moreover, the image of $L(\theta_l)$ in $\Ver_p$ under the functor in \ref{PropVerBasics}(3) lies in $\Ver_p^+$.
\end{lemma}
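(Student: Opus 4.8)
The statement has three assertions: (i) $L(\theta_l)$ is tilting, (ii) its image in $\Ver_p(G)$ vanishes iff $p=h+1$, and (iii) its image in $\Ver_p$ lies in the subcategory $\Ver_p^+$. I would handle these in order.

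For (i), the standard criterion for a Weyl module $\Delta(\lambda)$ with $\lambda$ in the lowest $p^2$-alcove (and in particular for $\lambda = \theta_l$, since $p > h$) is that it is tilting precisely when $\Delta(\lambda) = L(\lambda)$, equivalently when $\lambda$ is minuscule or when no weight strictly below $\lambda$ in the dominance order is linked to $\lambda$ under the dot action of $\widetilde W_p$. For the adjoint representation the relevant fact is classical: $\Delta(\theta_l) = L(\theta_l)$ for $p > h$ (see \cite[E.8 or II.4.3]{Jantzen}, using that $\theta_l$ lies in the closure of the fundamental alcove when $p > h$ and is linked only to itself among dominant weights $\le \theta_l$); alternatively one cites that the adjoint module $\fg = \mathrm{Lie}(G)$ is simple for $p > h$. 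Since $\theta_l$ is also $-w_0$-fixed, self-duality gives that the simple $=$ Weyl module $\Delta(\theta_l) = \nabla(\theta_l)$ is tilting.

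For (ii), I would use Proposition~\ref{PropVerBasics}(1): the image of $L(\lambda)$ in $\Ver_p(G)$ is nonzero iff $\lambda \in A$, and by the reformulation in \ref{SecWeightsA} this means $\langle \lambda, \theta_s^\vee\rangle \le p - h$. Applying this with $\lambda = \theta_l$, I need the value of $\langle \theta_l, \theta_s^\vee\rangle$. The key computation is that $\langle \theta_l, \theta_s^\vee\rangle = h - 1$: indeed $\theta_s^\vee = \sum_i k_i' \alpha_i^\vee$ is the highest root of $\Phi^\vee$ and one has $\langle \theta_l, \alpha_i^\vee \rangle \ge 0$ for all $i$ with the pairing being controlled so that $\langle \theta_l, \theta_s^\vee\rangle = h-1$ — this is exactly dual to the identity $\langle \rho, \theta_s^\vee\rangle = h-1$ already derived in \ref{SecWeightsA}, combined with the fact that $\theta_l = 2\rho - \sum_{\alpha \in \Phi^+, \alpha \ne \theta_l}(\cdots)$; more cleanly, it is standard that the height of $\theta_s^\vee$ in $\Phi^\vee$ equals $h-1$ and $\langle\theta_l,\theta_s^\vee\rangle$ equals this height since $\theta_l$ is the highest weight whose pairing with each $\alpha_i^\vee$ is the coefficient $k_i'$... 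I would instead argue directly: $\langle\theta_l,\theta_s^\vee\rangle = \langle\theta_l + \rho, \theta_s^\vee\rangle - \langle\rho,\theta_s^\vee\rangle$, and since $\theta_l$ lies on the wall of $A$ defined by $\langle\lambda+\rho,\theta_s^\vee\rangle = h$ (as $\theta_l + \rho$ is the highest weight appearing, with the adjoint rep sitting just at the boundary), we get $\langle\theta_l,\theta_s^\vee\rangle = h - (h-1) = 1$... This is the delicate point. Let me phrase the intended route: use that $\Delta(\theta_l)$ has a nonzero image in $\Ver_p(G)$ iff $p - h \ge \langle\theta_l,\theta_s^\vee\rangle$, and establish $\langle\theta_l,\theta_s^\vee\rangle = h - 1$ via the case-by-case table in \ref{TabGroups} together with the identity $\langle\rho,\theta_s^\vee\rangle = h-1$ and the observation that $\theta_l$ and $2\rho$ are comparable root-theoretically; then the image vanishes iff $p - h < h - 1$ is false, i.e. the boundary case $p - h = h - 1$...

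I realize I should be honest about where the real work is. The main obstacle is pinning down $\langle \theta_l, \theta_s^\vee\rangle$ and the exact vanishing threshold. The cleanest correct statement is: $\langle\theta_l,\theta_s^\vee\rangle$ equals $2$ unless $\Phi$ is simply-laced (where $\theta_l=\theta_s$ and the value relation differs) — actually for the adjoint representation the relevant quantity is that the largest weight $\mu$ of $\fg$ with $\langle\mu+\rho,\theta_s^\vee\rangle$ maximal gives value $h$ exactly when $p = h+1$ forces $\theta_l \notin A$. I would therefore organize (ii) as: by \ref{PropVerBasics}(1) the image is nonzero iff $\theta_l \in A$; by the second description of $A$, $\theta_l \in A$ iff $\langle\theta_l,\theta_s^\vee\rangle \le p-h$; and one computes $\langle\theta_l,\theta_s^\vee\rangle = 2$ in the non-simply-laced cases and $=1$... no. I will present it using the uniform identity that, for the highest root, $\langle\theta_l, \theta_s^\vee\rangle$ together with $\theta_l$ being the unique dominant weight on the affine wall $\langle\lambda+\rho,\theta_s^\vee\rangle = h$ forces: image nonzero $\iff p > h+1 \iff p \ne h+1$ (since $p > h$ always). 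For (iii), I would note that $\Ver_p^+ = \Ver_p(\PGL_2)$ is the subcategory of $\Ver_p$ consisting of objects $L_k$ with $k$ even (those in the image of $\Rep\,\mathrm{PGL}_2$), equivalently objects with trivial $Z(\SL_2) = \mu_2$-action; the image of $L(\theta_l)$ under \ref{PropVerBasics}(3) is a sum of $L_k$'s, and since $L(\theta_l) = \fg$ carries a trivial action of $Z(G)$ (the center acts trivially on the Lie algebra via the adjoint representation, as $\fg$ is a quotient of $\fg\mathfrak{l}$ modulo center — more precisely $Z(G)$ acts trivially on $\mathrm{Lie}(G)$), by functoriality the principal $\SL_2$ sends $Z(\SL_2)$ into $Z(G)$, so the image in $\Ver_p$ has trivial $\mu_2$-action and hence lies in $\Ver_p^+$. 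This last part is the clean one; I expect (ii), specifically the root-system bookkeeping for $\langle\theta_l,\theta_s^\vee\rangle$ and the precise equality $p=h+1$, to be the main obstacle, and I would resort to the table in \ref{TabGroups} case-by-case if a uniform argument proves elusive.
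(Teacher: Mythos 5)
Your parts (i) and (iii) are fine: tiltingness of $L(\theta_l)$ via simplicity and self-duality of the adjoint module for $p>h$ is a legitimate alternative to the paper's alcove argument, and your explanation of (iii) — $Z(G)$ acts trivially on $\mathrm{Lie}(G)$ and the principal $\SL_2$ sends $\mu_2=Z(\SL_2)$ into $Z(G)$, so all weights of the restriction are even — is exactly the first of the two justifications the paper offers.

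Part (ii), however, has a genuine gap, and you flag it yourself without closing it. The entire argument reduces to the single numerical input $\langle\theta_l,\theta_s^\vee\rangle$, and you cycle through three mutually inconsistent guesses ($h-1$, then $1$, then ``$2$ unless simply-laced'') without ever committing to or verifying one. The correct value, which the paper simply states, is $\langle\theta_l,\theta_s^\vee\rangle=2$ for every type \emph{except} $G_2$, where it equals $3$. (Your guess $h-1$ fails already for $A_r$ with $r\geq 3$, where $\langle\theta,\theta^\vee\rangle=2\neq r$; your final guess is also wrong because the value is $2$ in the simply-laced case too, since there $\theta_l=\theta_s$.) With the value $2$ in hand, the criterion $\theta_l\in A\iff\langle\theta_l,\theta_s^\vee\rangle\leq p-h$ gives nonvanishing for $p\geq h+2$, and for $p=h+1$ one has $\langle\theta_l+\rho,\theta_s^\vee\rangle=2+(h-1)=p$, i.e.\ $\theta_l$ sits on the far wall, so $L(\theta_l)$ is a negligible tilting module with zero image — a step you gesture at but do not justify (the paper cites \cite[\S E]{Jantzen}). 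You also miss entirely that $G_2$ needs a separate treatment: there $\theta_l\in A$ only for $p\geq h+3=9$, so the dichotomy ``zero iff $p=h+1$'' survives only because the sole prime $p$ with $6<p<9$ is $p=7=h+1$, in which case $\theta_l$ lies on a wall of the \emph{second} alcove and $L(\theta_l)$ is again a negligible tilting. Without the correct value of $\langle\theta_l,\theta_s^\vee\rangle$ and the $G_2$ exception, assertion (ii) is not proved.
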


\begin{proof}
If the type of $G$ is not $G_2$ then we have $\langle\theta_l,\theta_s^\vee\rangle=2$. This means that, for $p\geq h+2$, we have $\theta_l\in A$ and hence $L(\theta_l)$ is a simple tilting with a non-zero image in $\Ver_p(G)$. For $p=h+1$, $\theta_l$ is on the far wall of the fundamental alcove, so it is a negligible tilting by the theory of \cite[\S E]{Jantzen}. 

If $G=G_2$, then $\langle\theta_l,\theta_s^\vee\rangle=3$, so $\theta_l\in A$ unless $p<h+3=9$. Hence the only special case with $p>h$ is $p=7=h+1$. In this case $\theta_l$ lies on the wall of the next alcove up, so $L(\theta_l)$ is again a (negligible) tilting module.

One can show that $L(\theta_l)$ lands in $\Ver_p^+$ by proving that roots are sent to even integers under the map in $\ref{PropVerBasics}$, or alternately by directly computing the image of $L(\theta_l)$ in each type as we do in Table~\ref{TabImages} below.
\end{proof}

\subsubsection{}\label{SecWeylFactors}If $p>h$ and $\lambda\in A$, then the restriction of $L(\lambda)$ to the principal $\SL_2$ must be a tilting module. The $\SL_2$ Weyl module $\Delta_m$ with highest weight $m$ has weights $m,m-2,\dots,-m$, and we write $T_m$ for the unique $\SL_2$ tilting module with highest weight $m$. We recall from \cite{TW} that if $T_m$ is a negligible tilting module (that is $m\geq p-1$), then the Weyl composition factors of $T_m$ are either of the form $\Delta_{ap-1}$ with $a\in\mZ_{>0}$, or occur in pairs of the form $\Delta_{ap+b-1},\Delta_{ap-b-1}$ with $a\in\mZ_{>0}$ and $1\leq b\leq p-1$. Using this information one can determine the image of a simple $L(\lambda)\in\Ver_p(G)$ in $\Ver_p$ under the functor in Proposition~\ref{PropVerBasics}(3), by collecting the weights into sequences of the form $m,m-2,\cdots,-m$ to determine the Weyl modules and then discarding those which combine to give negligible tiltings. Lemma~\ref{LemCharacters} gives a neat algebraic realisation of this process.

In Table~\ref{TabImages} we list the Weyl composition factors of some important representations in each type, and then one can deduce the image of a listed representation in $\Ver_p$ for $p>h$ by the above procedure. We give some examples that will be important for the main theorem:
\begin{enumerate}
\item In type $A_r$, $L(\theta_l)$ has image $L_2\oplus L_4\oplus\dots\oplus L_{2s}$ in $\Ver_p$ where $s=\min(r,p-2-r)$.
\item In types $B_r$ and $C_r$, $L(\theta_l)$ has image $L_2\oplus L_6\oplus L_{10}\oplus\dots\oplus L_{4s-2}$ in $\Ver_p$ where $s=\min(r,\frac{p-1}2-r)$.
\item In type $D_r$, $L(\theta_l)$ has image $L_2\oplus L_{2r-2}$ in $\Ver_{2r+1}$ whenever $2r+1$ is prime.
\item In type $E_7$, $L(\theta_l)$ has image $L_2\oplus L_{14}$ and $L(\varpi_1)$ has image $L_9$ in $\Ver_{23}$.
\item In type $G_2$, $L(\theta_l)$ has image $L_2\oplus L_{10}$ in $\Ver_p$ for $p\geq13$. 
\end{enumerate}

The images of the half-spin representations in type $D_r$ will also be important for the main theorem, and since their derivation is more involved we give the full details in Proposition~\ref{PropTypeD}. That the type $D_r$ adjoint and half-spin representations undergo so much cancellation in characteristic $2r+1$ may appear surprising, so we give a more conceptual explanation for this case in Remark~\ref{RemOSp}.

\subsubsection{Table of small and adjoint representations}\label{TabImages} Below we list the smallest non-trivial representation and adjoint representation in each type, and the Weyl composition factors of their images over the principal $\SL_2$. The classical cases are well known, and the exceptional cases are directly computed in \cite{St}. Note that for type $B_2=C_2$, we use the $C_2$ labelling convention.

\vspace{0.5em}{\noindent
\begin{tabular}{r|l|l}
Type & Smallest non-trivial & Adjoint representation\\\hline
$A_r$ & $L(\varpi_1)\mapsto\Delta_r$ & $L(\varpi_1+\varpi_r)\mapsto[\Delta_2,\Delta_4,\Delta_6,\dots,\Delta_{2r}]$\\\hline
$B_r$ & $L(\varpi_1)\mapsto\Delta_{2r}$ & $L(\varpi_2)\mapsto[\Delta_2,\Delta_6,\Delta_{10},\dots,\Delta_{4r-2}]$\\\hline
$C_r$ & $L(\varpi_1)\mapsto\Delta_{2r-1}$ & $L(2\varpi_1)\mapsto[\Delta_2,\Delta_6,\Delta_{10},\dots,\Delta_{4r-2}]$\\\hline
$D_r$ & $L(\varpi_1)\mapsto[\Delta_0,\Delta_{2r-2}]$ & $L(\varpi_2)\mapsto[\Delta_2,\Delta_6,\Delta_{10},\dots,\Delta_{4r-6},\Delta_{2r-2}]$\\\hline
$E_6$ & $L(\varpi_1)\mapsto[\Delta_0,\Delta_8,\Delta_{16}]$ & $L(\varpi_6)\mapsto[\Delta_2,\Delta_8,\Delta_{10},\Delta_{14},\Delta_{16},\Delta_{22}]$\\\hline
$E_7$ & $L(\varpi_1)\mapsto[\Delta_9,\Delta_{17},\Delta_{27}]$ & $L(\varpi_6)\mapsto[\Delta_2,\Delta_{10},\Delta_{14},\Delta_{18},\Delta_{22},\Delta_{26},\Delta_{34}]$\\\hline
$E_8$ & same as adjoint & $L(\varpi_1)\mapsto[\Delta_2,\Delta_{14},\Delta_{22},\Delta_{26},\Delta_{34},\Delta_{38},\Delta_{46},\Delta_{58}]$\\\hline
$F_4$ & $L(\varpi_1)\mapsto[\Delta_8,\Delta_{16}]$ & $L(\varpi_4)\mapsto[\Delta_2,\Delta_{10},\Delta_{14},\Delta_{22}]$\\\hline
$G_2$ & $L(\varpi_1)\mapsto\Delta_6$ & $L(\varpi_2)\mapsto[\Delta_2,\Delta_{10}]$\\\hline
\end{tabular}}\vspace{0.5em}

\begin{lemma}\label{LemCharacters}
Let $T,T'$ be tilting modules of $\SL_2$ and consider the character polynomial
$$\ch_T(x)=\sum_{\lambda\in\mZ}m_T(\lambda)x^\lambda,$$
writing $m_T(\lambda)$ for the multiplicity of the weight $\lambda$ in $T$, and similarly for $T'$. The images of $T$ and $T'$ in $\Ver_p$ are isomorphic if and only if $\ch_T(\omega)=\ch_{T'}(\omega)$ for a primitive $p$-th root of unity $\omega$.
\end{lemma}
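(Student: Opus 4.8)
The plan is to reduce both conditions to data about the non-negligible indecomposable tilting summands of $T$ and $T'$, and then to a linear-independence computation in $\mZ[\omega]$. Recall that the indecomposable tilting modules of $\SL_2$ are the $T_m$ ($m\ge 0$), that $T_m=\Delta_m=L(m)$ is simple for $0\le m\le p-1$, and that (as in \cite{TW}) $T_m$ is negligible precisely when $m\ge p-1$, which is exactly when its image in $\Ver_p$ vanishes; for $0\le m\le p-2$ the image of $T_m$ is the simple object $L_m$. Writing $T\cong\bigoplus_m T_m^{\oplus a_m}$ and $T'\cong\bigoplus_m T_m^{\oplus a'_m}$, the images in $\Ver_p$ are $\bigoplus_{m=0}^{p-2}L_m^{\oplus a_m}$ and $\bigoplus_{m=0}^{p-2}L_m^{\oplus a'_m}$, which, $\Ver_p$ being semisimple with the $L_m$ pairwise non-isomorphic, are isomorphic iff $a_m=a'_m$ for all $0\le m\le p-2$. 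Since deleting the negligible summands changes neither the image in $\Ver_p$ nor (by the next step) the value $\ch(\omega)$, I may assume $T$ and $T'$ have no negligible summands.

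The main computation is that $\ch_{T_m}(\omega)=0$ for every negligible $T_m$. This combines the elementary identity $\ch_{\Delta_k}(x)=(x^{k+1}-x^{-k-1})/(x-x^{-1})$, evaluated at $x=\omega$ (legitimate since $\omega\ne\pm1$; the case $p=2$ is trivial), with the description from \cite{TW} of the Weyl composition factors of a negligible $T_m$: each is either $\Delta_{ap-1}$, for which $\ch_{\Delta_{ap-1}}(\omega)=(\omega^{ap}-\omega^{-ap})/(\omega-\omega^{-1})=0$, or belongs to a pair $\Delta_{ap+b-1},\Delta_{ap-b-1}$ with $\ch_{\Delta_{ap+b-1}}(\omega)+\ch_{\Delta_{ap-b-1}}(\omega)=(\omega^{b}-\omega^{-b}+\omega^{-b}-\omega^{b})/(\omega-\omega^{-1})=0$. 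Summing over composition factors yields $\ch_{T_m}(\omega)=0$, so for $T$ as above $\ch_T(\omega)=\sum_{m=0}^{p-2}a_m\,(\omega^{m+1}-\omega^{-m-1})/(\omega-\omega^{-1})$, and similarly for $T'$. In particular, isomorphic images give $\ch_T(\omega)=\ch_{T'}(\omega)$, which is one implication.

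For the converse, multiplying through by $\omega-\omega^{-1}\ne0$ and using $\omega^{-m-1}=\omega^{p-m-1}$ turns $\ch_T(\omega)=\ch_{T'}(\omega)$ into the relation $\sum_{m=0}^{p-2}\big((a_m-a'_m)-(a_{p-2-m}-a'_{p-2-m})\big)\omega^{m+1}=0$ in $\mZ[\omega]$; since this relation has vanishing constant term and $\Phi_p$ is the minimal polynomial of $\omega$, all its coefficients vanish, i.e.\ $a_m-a'_m=a_{p-2-m}-a'_{p-2-m}$ for all $m$. Passing from this symmetry to $a_m=a'_m$ is the one genuinely delicate point: I would organise it by the parity of the $\SL_2$-weights -- a tilting module of $\SL_2$ is canonically the sum of an ``even'' and an ``odd'' part, matching the factorisation of $\Ver_p$ into $\Ver_p^+$ and its complement -- so that one may assume $a_m=a'_m=0$ for all odd $m$ (automatic in the paper's applications, e.g.\ for images of adjoint representations, whose weights are even), and then, $p$ being odd, the pairing $m\leftrightarrow p-2-m$ sends even indices to odd ones and forces every $a_m-a'_m$ to vanish. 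Equivalently, the crux is that within a fixed parity class the $(p-1)/2$ numbers $\ch_{\Delta_m}(\omega)$ are $\mQ$-linearly independent; everything preceding this point is purely formal.
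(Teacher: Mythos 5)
Your argument follows the same route as the paper's proof: annihilate the negligible Weyl factors at $x=\omega$ via the pairing $\Delta_{ap+b-1},\Delta_{ap-b-1}$, then reduce the converse to a linear-independence statement for powers of $\omega$. But you have correctly isolated the one step that the paper's proof elides. The linear-independence argument only yields the symmetrised identity $a_m-a'_m=a_{p-2-m}-a'_{p-2-m}$ (equivalently, in the paper's notation, $[\overline T:L_\lambda]-[\overline T':L_\lambda]=[\overline T:L_{p-2-\lambda}]-[\overline T':L_{p-2-\lambda}]$), and this does not force the multiplicities to agree; the lemma as stated is in fact false. For example, take $T=T_0\oplus T_{p-2}$ and $T'=T_{p-1}$: the weights of $\Delta_{p-2}$ run over all nonzero residues mod $p$, so $\ch_{T}(\omega)=1+(-1)=0=\ch_{T'}(\omega)$, while the image of $T$ in $\Ver_p$ is $L_0\oplus L_{p-2}\neq0$ and that of $T'$ is $0$.

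Your proposed repair --- require all weights of $T$ and of $T'$ to lie in a single coset of $2\mZ$, so that the parity-reversing involution $m\mapsto p-2-m$ kills every difference --- is exactly right, and the hypothesis holds everywhere the lemma is applied: in Proposition~\ref{PropTypeD} the weights are $1\pm2\pm\cdots\pm(r-1)$, all of one parity, and in Proposition~\ref{CorEquivalences} one compares restrictions of simple $G$-modules, whose weights under $X(T)\to\mZ$ differ by images of roots, hence by even integers. Two small caveats. First, this is an added hypothesis on the statement, not a reduction one ``may assume'': since $2$ is invertible mod $p$, every power of $\omega$ is an even power, so the even and odd contributions to $\ch_T(\omega)$ cannot be separated, and the lemma should simply be restated with the parity assumption. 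Second, your aside that $p=2$ is trivial is not correct (e.g.\ $\ch_{T_1}(-1)=-2\neq0$ although $T_1$ is negligible), but the lemma is only ever invoked for odd $p$, where $\omega\neq\omega^{-1}$. With the parity hypothesis added, your proof is complete and is more careful than the one in the paper.
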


\begin{proof}
Writing $[T:\Delta_\lambda]$ for the multiplicity of $\Delta_\lambda$ in the Weyl filtration of $T$, we have
\begin{align*}
\ch_T(x)&=\sum_{\lambda\in\mZ}[T:\Delta_\lambda](x^\lambda+x^{\lambda-2}+\cdots+x^{-\lambda}),\text{ hence}\\
(x-x^{-1})\ch_T(x)&=\sum_{\lambda\in\mZ}[T:\Delta_\lambda](x^{\lambda+1}-x^{-\lambda-1}).
\end{align*}
When $\lambda+1$ is a multiple of $p$ we have $\omega^{\lambda+1}-\omega^{-\lambda-1}=0$, and for $a,b\in\mZ$ we have
$$(\omega^{ap+b}-\omega^{-ap-b})+(\omega^{ap-b}-\omega^{-ap+b})=0.$$
Thus, using the properties of Weyl factors of tilting modules as described above, all weights coming from Weyl factors which combine to give negligible summands of $T$ are annihilated when taking $x=\omega$. So, writing $\overline T$ for the image of $T$ in $\Ver_p$, we have
$$(\omega-\omega^{-1})\ch_T(\omega)=\sum_{\lambda=0}^{p-2}[\overline T:L_\lambda](\omega^{\lambda+1}-\omega^{-\lambda-1}).$$
Now suppose $\ch_T(\omega)=\ch_{T'}(\omega)$. Then $0=\omega^{-1}(\omega-\omega^{-1})(\ch_T(\omega)-\ch_{T'}(\omega))$ is equal to
$$\sum_{\lambda=0}^{p-2}([\overline T:L_\lambda]-[\overline T':L_\lambda])(x^\lambda-x^{p-\lambda-2})$$
evaluated at $x=\omega$. This polynomial has degree at most $p-2$, but the minimal polynomial of $\omega$ over $\mZ$ has degree $p-1$, so this polynomial must be zero. Hence $\overline T$ and $\overline T'$ have the same decomposition multiplicities and must be isomorphic.
\end{proof}

\begin{prop}\label{PropTypeD}
Suppose $p=2r+1\geq7$. The half-spin representations $L(\varpi_r)$ and $L(\varpi_{r-1})$ in type $D_r$ both have image
$$\begin{cases}
L_{r-1} & \text{ if }r\equiv1\text{ or }r\equiv2\mod4\\
L_r & \text{ if }r\equiv3\text{ or }r\equiv0\mod4
\end{cases}$$
under the functor $\Ver_p(\Spin_{2r})\to\Ver_p$.
\end{prop}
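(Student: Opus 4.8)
The plan is to restrict the half-spin representations $L(\varpi_r)$ and $L(\varpi_{r-1})$ along a principal $\SL_2\to\Spin_{2r}$, compute the resulting $\SL_2$-character, and then apply Lemma~\ref{LemCharacters}; by the commutative diagram in Proposition~\ref{PropVerBasics}(3), the image of $L(\varpi_r)$ in $\Ver_p$ is the semisimplification $\overline T$ of its restriction $T$ to the principal $\SL_2$. In the standard $\epsilon$-coordinates both half-spin representations are minuscule, with weight set $\{\tfrac12(\delta_1,\dots,\delta_r):\delta_i\in\{\pm1\}\}$ and all weight multiplicities $1$, the parity of $\#\{i:\delta_i=-1\}$ distinguishing the two. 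By Proposition~\ref{PropVerBasics}(3) the restriction sends a weight $\lambda$ to $\sum_{\alpha\in\Phi^+}\langle\lambda,\alpha^\vee\rangle$, which for the simply-laced type $D_r$ equals $(\lambda,2\rho)$; since $\rho=(r-1,r-2,\dots,1,0)$, the weight $\tfrac12(\delta_1,\dots,\delta_r)$ is sent to $\sum_{i=1}^{r}\delta_i(r-i)=\sum_{j=1}^{r-1}\epsilon_j j$ after relabelling $j=r-i$, $\epsilon_j=\delta_{r-j}$ (the $i=r$ term dropping out). For each sign vector $(\epsilon_1,\dots,\epsilon_{r-1})$ exactly one choice of the remaining sign $\delta_r$ lies in each half-spin, so for both half-spins the restriction $T$ is the same tilting $\SL_2$-module (cf.~\ref{SecWeylFactors}), with $\ch_T(x)=\prod_{j=1}^{r-1}(x^j+x^{-j})$. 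In particular the two half-spins have the same image in $\Ver_p$.

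By Lemma~\ref{LemCharacters} it now suffices to compute $(\omega-\omega^{-1})\ch_T(\omega)$ at a primitive $p$-th root of unity $\omega$ ($p=2r+1$) and recognise it as $\omega^{k+1}-\omega^{-k-1}$ for a unique $k\in[0,p-2]$, which forces $\overline T\cong L_k$. Writing $[m]=\omega^m-\omega^{-m}$, so that $[p-m]=-[m]$ and $[m]\neq0$ for $1\le m\le 2r$, and using the identity $\omega^j+\omega^{-j}=[2j]/[j]$ (valid since $p\nmid j$ for $1\le j\le r-1$), one obtains
$$(\omega-\omega^{-1})\prod_{j=1}^{r-1}(\omega^j+\omega^{-j})\;=\;[1]\prod_{j=1}^{r-1}\frac{[2j]}{[j]}\;=\;\frac{[r]}{\prod_{m=2}^{r}[m]}\prod_{j=1}^{r-1}[2j].$$
To handle the numerator I replace $[2j]$ by $-[p-2j]$ for each of the $\lfloor(r-1)/2\rfloor$ indices $j$ with $2j>r$ (where then $3\le p-2j\le r$), and use the combinatorial identity
$$\{2j:1\le j\le r-1,\ 2j\le r\}\ \sqcup\ \{p-2j:1\le j\le r-1,\ 2j>r\}\ =\ \{2,3,\dots,r\}$$
(the two sides being respectively the even and the odd integers in $\{2,\dots,r\}$), which I would verify by separating the cases $r$ even and $r$ odd. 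This gives $\prod_{j=1}^{r-1}[2j]=(-1)^{\lfloor(r-1)/2\rfloor}\prod_{m=2}^{r}[m]$, so the products cancel and
$$(\omega-\omega^{-1})\ch_T(\omega)=(-1)^{\lfloor(r-1)/2\rfloor}[r]=(-1)^{\lfloor(r-1)/2\rfloor}(\omega^r-\omega^{-r})\neq0.$$

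It remains to read off the answer from the sign. The integer $\lfloor(r-1)/2\rfloor$ is even exactly when $r\equiv1,2\pmod4$; then $(\omega-\omega^{-1})\ch_T(\omega)=\omega^r-\omega^{-r}=\omega^{(r-1)+1}-\omega^{-((r-1)+1)}$, so $\ch_T(\omega)=\ch_{L_{r-1}}(\omega)$ and $\overline T\cong L_{r-1}$. When $r\equiv0,3\pmod4$ the sign is $-1$; since $p=2r+1$ gives $\omega^{r+1}=\omega^{-r}$ and $\omega^{-(r+1)}=\omega^{r}$, we get $-(\omega^r-\omega^{-r})=\omega^{r+1}-\omega^{-(r+1)}$, so $\overline T\cong L_r$. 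Both $r-1$ and $r$ lie in $[0,p-2]$, so $L_{r-1}$ and $L_r$ are genuine nonzero simple objects of $\Ver_p$, and $\overline T\neq0$.

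The main obstacle is the evaluation of $\prod_{j=1}^{r-1}[2j]$ at $\omega$, and within it the bookkeeping of signs: the combinatorial partition of $\{2,\dots,r\}$ must be correct, and — above all — the parity $(-1)^{\lfloor(r-1)/2\rfloor}$ of the number of folded factors is exactly what discriminates between the two possible answers $L_{r-1}$ and $L_r$, so it must not be off by a sign. Everything else (the weight computation giving $\ch_T(x)=\prod_{j=1}^{r-1}(x^j+x^{-j})$, and the final matching with $\ch_{L_k}$ using $p=2r+1$) is routine.
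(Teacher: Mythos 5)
Your proof is correct and follows essentially the same route as the paper: restrict to the principal $\SL_2$ via Proposition~\ref{PropVerBasics}(3), obtain the character $\prod_{j=1}^{r-1}(x^j+x^{-j})$, and apply Lemma~\ref{LemCharacters} at a primitive $p$-th root of unity. The only difference is in the final evaluation: the paper computes $(\ch_T(\omega))^2$ via the factorisation of $1-y^p$ and then resolves the resulting sign ambiguity between $L_{r-1}$ and $L_r$ by a parity remark, whereas you compute $(\omega-\omega^{-1})\ch_T(\omega)$ directly by the quantum-integer folding identity $[2j]=-[p-2j]$, which pins down the sign $(-1)^{\lfloor(r-1)/2\rfloor}$ explicitly; your version of this last step is in fact more self-contained than the paper's.
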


\begin{proof}
We recall the following well-known realisation of the weight space of type $D_r$. Let
\begin{align*}
e_1&=\varpi_1, & e_i&=\varpi_i-\varpi_{i-1}\text{ for }2\leq i\leq r-2,\\
e_{r-1}&=\varpi_{r-2}-2\varpi_{r-1}, & e_r&=2\varpi_r-\varpi_{r-2}.
\end{align*}
Then we have $(e_i,e_j)=\delta_{ij}$ for all $1\leq i,j\leq r$. The roots are given by $\pm e_i\pm e_j$ with $i\neq j$, the simple roots are $\alpha_i=e_i-e_{i+1}$ for $1\leq i\leq r-1$ and $\alpha_n=e_{r-1}+e_r$, and the positive roots are $e_i+e_j$ and $e_i-e_j$ for all $j>i$. This means the restriction to the principal $\SL_2$ in Proposition~\ref{PropVerBasics} (3) is given by a map $X(T)\to\mZ$ sending $e_i$ to $2r-2i$.

We have $\varpi_r=\frac12(e_1+\cdots+e_r)$ and $\varpi_{r-1}=\frac12(e_1+\cdots+e_{r-1}-e_r)$, hence the weights of $L(\varpi_r)$ (respectively $L(\varpi_{r-1})$) are precisely all weights of the form $\frac12(\pm e_1\pm\cdots\pm e_r)$ with an even (respectively odd) number of minus signs. Under the map $X(T)\to\mZ$ these become $1\pm2\pm\cdots\pm(r-1)$ for all possible combinations of $+$ and $-$ (with no even-odd split since $e_r\mapsto0$), and hence
$$\ch_{L(\varpi_r)}(x)=\ch_{L(\varpi_{r-1})}(x)=(x+x^{-1})(x^2+x^{-2})\cdots(x^{r-1}+x^{1-r}).$$
Substituting $x$ for a primitive $p$-th root of unity $\omega$, we can rearrange $(\ch_{L(\varpi_r)}(\omega))^2$ to obtain
\begin{align*}
(\ch_{L(\varpi_r)}(\omega))^2&=(\omega^2+1)(\omega^{-2}+1)(\omega^4+1)(\omega^{-4}+1)\cdots(\omega^{p-3}+1)(\omega^{-p+3}+1)\\
&=(\omega^2+1)(\omega^3+1)(\omega^4+1)\cdots(\omega^{p-2}+1).
\end{align*}
Using the factorisation $1-y^p=(1-y)(\omega-y)(\omega^2-y)\cdots(\omega^{p-1}-y)$ with $y=-1$, we have
$$(\ch_{L(\varpi_r)}(\omega))^2=\frac{1-(-1)^p}{(1+1)(\omega+1)(\omega^{-1}+1)}=\left(\frac{\omega^{r+1}}{\omega+1}\right)^2=\left(\frac{\omega^{r+1}-\omega^r}{\omega-\omega^{-1}}\right)^2.$$
This means $\ch_{L(\varpi_r)}(\omega)$ is equal to either
$$\frac{\omega^{r+1}-\omega^{p-r-1}}{\omega-\omega^{-1}}=\omega^r+\omega^{r-2}+\cdots+\omega^{-r} \quad\text{or}\quad
\frac{-\omega^{p-r}+\omega^r}{\omega-\omega^{-1}}=\omega^{r-1}+\omega^{n-3}+\cdots+\omega^{-r+1},$$
so by Lemma~\ref{LemCharacters} the image of $L(\varpi_r)$ in $\Ver_p$ is either $L_{r-1}$ or $L_r$. Which it is can be determined by considering for which $r$ the weights are odd or even.
\end{proof}

\subsection{Decomposition of Verlinde categories}\label{SecVerDecomp}

\subsubsection{}\label{SecVerSetup} Let $\Ver_p^0(G)$ be the subcategory of invertible objects in $\Ver_p(G)$, and $\Ver_p^+(G)$ the subcategory spanned by $L(\lambda)$ with $\lambda\in\mZ\Phi\cap A$, that is $\Ver_p^+(G)\simeq\Ver_p(G/Z(G))$. Write $Z=Z(G)$ for the centre of $G$. Fix a principal $\SL_2\to G$, and let $z:\mZ/2\to Z$ be the restriction of this map to the centres (note that $Z(\SL_2)\cong\mZ/2$ since $p\geq h+1\geq3$). Such a restriction exists because $Z$ is the kernel of the adjoint map $G\to\GL({\rm Lie}(G))$ and the action of $Z(\SL_2)$ on the Lie algebra ${\rm Lie}(G)$ is trivial, since the restriction of ${\rm Lie}(G)$ to $\SL_2$ is given by the adjoint representation in Table~\ref{TabImages} which has only even weights.

By Lemma~\ref{LemAdjointTilting}, ${\rm Lie}(G)$ has an image in $\Ver_p^+$ which we denote by $\fg$. The Lie bracket is a homomorphism of $G$-representations, so its image in $\Ver_p^+$ endows $\fg$ with a Lie algebra structure. Denote by $\phi:L_2\to\fg$ the image in $\Ver_p^+$ of the inclusion of the principal $\mathfrak{sl}_2$ into ${\rm Lie}(G)$. Since this inclusion is a Lie algebra homomorphism in $\Rep\SL_2$, $\phi$ is a Lie algebra homomorphism with the Lie algebra structure of $L_2$ coming from the fundamental group of $\Ver_p^+$.

\begin{theorem}\label{ThmVerDecomp}
If $G$ is a simple algebraic group with Coxeter number $h<p$, then we have the following:
\begin{enumerate}
\item $\Ver_p^0(G)\simeq\Rep_\sVec(Z,z)$;
\item $\Ver_p^+(G)\simeq\Rep_{\Ver_p^+}(\fg,\phi)$;
\item The only invertible object in $\Ver_p^+(G)$ is $\unit$, and the only tensor subcategories of $\Ver_p^+(G)$ are $\Vecc$ and itself;
\item $\Ver_p(G)\simeq\Ver_p^+(G)\boxtimes\Ver_p^0(G)\simeq\Rep(\fg,\phi)\boxtimes\Rep(Z,z)$.
\end{enumerate}
\end{theorem}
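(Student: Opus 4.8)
The plan is to prove the four parts in the order (3), (1), (2), (4), using throughout the $H$-grading of $\Ver_p(G)$: setting $H=X(T)/\mZ\Phi$, the centre $Z(G)$ is diagonalisable with character group $H$ (\ref{SecZG}), so it acts on the simple object $L(\lambda)$ ($\lambda\in A$) of $\Ver_p(G)$ through $\bar\lambda:=\lambda+\mZ\Phi$, making $\Ver_p(G)$ an $H$-graded tensor category with neutral component $\Ver_p^+(G)$; taking $Z(G)$-weights gives a homomorphism $\partial$ from the group of isomorphism classes of invertible objects of $\Ver_p(G)$ (i.e.\ of $\Ver_p^0(G)$) to $H$. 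For \textbf{(3)}: if $p=h+1$ then $A\cap\mZ\Phi=\{0\}$ (a nonzero element would satisfy $\langle\lambda,\theta_s^\vee\rangle=1$, hence be a nonzero minuscule weight, impossible inside $\mZ\Phi$), so $\Ver_p^+(G)=\Vecc$ and there is nothing to prove; for $p\ge h+2$, Lemma~\ref{LemAdjointTilting} makes the image $\underline\fg$ of the adjoint tilting $L(\theta_l)$ a nonzero simple object of $\Ver_p^+(G)=\Ver_p(G/Z)$. Given a tensor subcategory $\cD$ containing a non-unit simple $L(\lambda)$, I would show $\underline\fg$ is a summand of $L(\lambda)\otimes L(\lambda)^\ast\in\cD$: the adjoint action embeds the simple Lie algebra $\mathrm{Lie}(G)$ into $\uHom(L(\lambda),L(\lambda))$ in $\Rep(G/Z)$, and one checks this survives semisimplification (e.g.\ by realising $\mathrm{Lie}(G)$ as a direct summand of $\uHom(L(\lambda),L(\lambda))$ via an invariant trace form and $p\nmid\dim G$, or via fusion multiplicities). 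Since $\mathrm{Lie}(G)$ tensor-generates $\Rep(G/Z)$, $\underline\fg$ tensor-generates $\Ver_p^+(G)$, so $\cD=\Ver_p^+(G)$; the only other tensor subcategory has all simples $\cong\unit$, i.e.\ is $\Vecc$, and in particular every invertible object is $\unit$.

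\textbf{(1)}: By (3), $\partial$ is injective (two invertibles with equal $\partial$-value differ by an invertible of $\Ver_p^+(G)$, hence by $\unit$). Surjectivity I would get from the ``simple current'' mechanism: letting $\Omega\cong\widehat W_p/\widetilde W_p\cong H$ be the stabiliser of the fundamental alcove in the extended affine Weyl group, each $\omega\cdot0$ ($\omega\in\Omega$) lies in $A$, and the standard $\Omega$-symmetry of the Grothendieck ring of $\Ver_p(G)$, $[L(\omega\cdot0)]\cdot[L(\lambda)]=[L(\omega\cdot\lambda)]$, forces $L(\omega\cdot0)\otimes L(\omega^{-1}\cdot0)\cong\unit$; thus every $L(\omega\cdot0)$ is invertible and $\omega\mapsto[L(\omega\cdot0)]$ identifies $\Omega\cong H$ with $\Ver_p^0(G)$. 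For parities: the tensor functor $\Ver_p(G)\to\Ver_p$ of Proposition~\ref{PropVerBasics}(3) sends an invertible $L$ to $L_0$ or $L_{p-2}$, and (Lemma~\ref{LemInvDim}, with compatibility of braidings) $L$ is odd iff its image is the odd invertible $L_{p-2}$, iff $Z(\SL_2)=\mZ/2$ acts on $L$ through $-1$ via $z$, iff $\partial([L])\in H=\widehat{Z}$ is nontrivial on $z(-1)$; hence $\Ver_p^0(G)\simeq\sVec_{H,\phi}$ with $\phi$ the pullback along $z$ of the nontrivial character of $\mZ/2$, i.e.\ $\Ver_p^0(G)\simeq\Rep_\sVec(Z,z)$.

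\textbf{(2) and (4)}: For (2), the tensor functor $F\colon\Ver_p^+(G)\to\Ver_p^+$ of Proposition~\ref{PropVerBasics}(3) is surjective ($\Ver_p^+$ is tensor-generated by $L_2$, a summand of $F(\underline\fg)=\fg$, since $\mathrm{Lie}(G)|_{\SL_2}$ has $\Delta_2$ as a Weyl factor), so relative Tannakian reconstruction over the base $\Ver_p^+$ (cf.\ \cite{CEO3} and the formalism of \ref{SecPrelim}) gives $\Ver_p^+(G)\simeq\Rep_{\Ver_p^+}(\pi)$ for the relative fundamental group $\pi$ with its canonical constraint; as $\Ver_p^+(G)$ is tensor-generated by $\underline\fg=$ image of $\mathrm{Lie}(G)$, the group $\pi$ is infinitesimal with $\mathrm{Lie}(\pi)=\fg$, so $\Rep_{\Ver_p^+}(\pi)=\Rep_{\Ver_p^+}(\fg,\phi)$ for the constraint $\phi$ of \ref{SecVerSetup} (this is also implicit in the Gelfand--Kazhdan picture of \cite{GK}). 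Finally (4) follows from Lemma~\ref{LemDecomp} applied with $\cA=\Ver_p^+(G)$, $\cB=\Ver_p^0(G)$: $\cB$ is pointed semisimple, $\cA$ has only $\unit$ invertible by (3), and $\cA,\cB$ tensor-generate $\Ver_p(G)$ since for $\lambda\in A$ there is, by (1), an invertible $L$ with $\partial([L])=\bar\lambda$, whence $L(\lambda)\cong(L(\lambda)\otimes L^\ast)\otimes L$ with $L(\lambda)\otimes L^\ast\in\Ver_p^+(G)$; together with (1) and (2) this is $\Rep(\fg,\phi)\boxtimes\Rep(Z,z)$. I expect the crux to be the surjectivity of $\partial$ in (1) — producing the entire group of invertibles via simple currents, which (4) also depends on — together with setting up the relative reconstruction in (2); the survival-under-semisimplification check in (3) is a more routine technical lemma.
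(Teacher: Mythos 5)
Your overall architecture matches the paper's: invertibles come from alcove symmetries of $\widehat W_p/\widetilde W_p$ (the paper's Lemma~\ref{LemAlcove} and Table~\ref{TabMinuscule}), part (3) is reduced to showing every non-unit simple of $\Ver_p^+(G)$ generates the image of the adjoint representation, part (2) is relative Tannakian reconstruction over $\Ver_p^+$, and part (4) is Lemma~\ref{LemDecomp}. However, there is a genuine gap at the crux of (3). You need that for every non-invertible simple $L(\lambda)$ one has $\Hom(L(\theta_l),L(\lambda)\otimes L(\lambda)^*)\neq0$ \emph{in the semisimplification}, and you dispose of this with ``one checks this survives semisimplification (e.g.\ via an invariant trace form and $p\nmid\dim G$, or via fusion multiplicities).'' The trace-form route requires the form $(x,y)\mapsto\mathrm{tr}_{L(\lambda)}(xy)$ to be nondegenerate on $\mathrm{Lie}(G)$, i.e.\ the Dynkin index of $L(\lambda)$ to be nonzero mod $p$ for every non-minuscule $\lambda\in A$; you neither verify this nor explain what to do when it fails, and it is not a formal consequence of anything you set up. The fallback ``via fusion multiplicities'' is precisely the hard direction (3)$\Rightarrow$(1) of the paper's Proposition~\ref{PropInvertibles}: a Verlinde-formula computation of $M_{\theta_l,\mu}^{\mu}=r-\#S_{\rm odd}+\#S_{\rm even}$, a geometric argument that every contributing reflection must pass through a wall of the fundamental alcove, and a case analysis forcing $\mu=(p-h)\varpi$ with $\varpi$ minuscule. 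That computation is the substance of the theorem and cannot be waved at.

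Two secondary soft spots. First, the inference ``$\mathrm{Lie}(G)$ tensor-generates $\Rep(G/Z)$, hence $\underline\fg$ tensor-generates $\Ver_p^+(G)$'' does not follow formally: tensor generation in the abelian category $\Rep(G/Z)$ uses subquotients, which are not preserved by semisimplification of $\Tilt(G/Z)$; in a semisimple target you need every simple to occur as a \emph{direct summand} of a tensor power of $\underline\fg$. The paper sidesteps this by proving generation on the other side of the equivalence: in Proposition~\ref{PropVerRep} it shows $\fg$ generates $\Rep_{\Ver_p^+}(\fg,\phi)$ using the Harish-Chandra-pair/normal-subgroup correspondence of \cite{Ve1,CEO3}. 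Second, in (2) your assertion that the relative fundamental group $\pi$ is infinitesimal with $\mathrm{Lie}(\pi)=\fg$ is exactly what has to be proven; the paper does this by showing $(1,\fg)$ is a normal sub-Harish-Chandra pair of $(\pi_0,\fg_\pi)$ and that the quotient category is trivial because every non-unit simple generates $\fg=L(\theta_l)$ --- which again rests on the Proposition~\ref{PropInvertibles} computation you skipped. Your parts (1) and (4), granted (3), are essentially the paper's argument and are fine.
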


\begin{example}\label{ExSLn}
For $G=\SL_n$ we can construct the Lie algebra $\fg$ exactly, as shown in \cite{Ve2}. For an object $X\in\Ver_p$, we define the Lie algebra $\mathfrak{gl}(X)=X\otimes X^*$ with bracket
$$[\cdot,\cdot]=(\id_X\otimes\ev_X\otimes\id_{X^*})\circ(\id_{\mathfrak{gl}(X)\otimes\mathfrak{gl}(X)}-c_{\mathfrak{gl}(X),\mathfrak{gl}(X)}).$$
We define $\mathfrak{sl}(X)$ as the Lie subalgebra given by the kernel of $\ev_{X^*}:\mathfrak{gl}(X)\to\unit$, so in particular for $2\leq n\leq p-2$ the Lie algebra $\mathfrak{sl}(L_{n-1})$ has underlying object
$$\mathfrak{sl}(L_{n-1})=L_2\oplus L_4\oplus L_6\oplus \cdots\oplus L_{2s-2}\quad\mbox{ with }\;s=\begin{cases}
n&\mbox{if $n<p/2$,}\\
p-n&\mbox{otherwise.}
\end{cases}
$$
The equivalence in Theorem~\ref{ThmVerDecomp} for $\SL_n$ is
$$\Ver_p(\SL_n)\simeq\Rep(\mathfrak{sl}(L_{n-1}),\phi)\boxtimes\Rep(\mu_n,z)$$
where $\phi$ is the inclusion $L_2\hookrightarrow\mathfrak{sl}(L_{n-1})$. If $n$ is odd then the invertibles in $\Ver_p(\SL_n)$ are even by the characterisation of invertibles below, hence $\Rep(\mu_n,z)\simeq\Vecc_{\mZ/n}$. If $n$ is even then we instead have $\Rep(\mu_n,z)\simeq\sVec_{\mZ/n,z}$ with $z$ assigning the generator of $\mZ/n$ an odd braiding. Since $L_{n-1}\otimes\overline{\unit}\cong L_{p-n-1}$, we have a Lie algebra isomorphism $\mathfrak{sl}(L_{n-1})\cong\mathfrak{sl}(L_{p-n-1})$ giving the level-rank duality $\Ver_p^+(\SL_n)\simeq\Ver_p^+(\SL_{p-n})$.
\end{example}

The decomposition $\Ver_p(G)\simeq\Ver_p^+(G)\boxtimes\Ver_p^0(G)$ is remarked on in \cite[\S 4.1]{CEO2} and \cite[Remark~6.2]{Gr}, although a detailed proof of this (along with the connection to $Z$ and $\fg$) appears to be absent from the literature. We give a full proof of Theorem~\ref{ThmVerDecomp} below. 

{The following lemmas are results of \cite[II.E.12]{Jantzen} and \cite[Lemma~1.1]{Gr}.} Recall that $\widetilde{W}_p$ is the affine Weyl group and $\widehat{W}_p$ the extended affine Weyl group.

\begin{lemma}\label{LemTensorDecomp}
For $\lambda,\mu,\nu\in A$, the multiplicity of $L(\nu)$ in $L(\lambda)\otimes L(\mu)$ in $\Ver_p(G)$ is
$$M_{\lambda,\mu}^\nu=\sum_{w\in\widetilde W_p}\varepsilon(w)m_\lambda(w\cdot\nu-\mu)$$
where $\varepsilon:\widetilde W_p\to\{1,-1\}$ is the sign homomorphism with $\varepsilon(s_{\alpha,k})=-1$ for all $\alpha\in\Phi$ and $k\in\mZ$, and $m_\lambda(x)$ is the multiplicity of the weight $x$ in $L(\lambda)$ in $\Rep G$.
\end{lemma}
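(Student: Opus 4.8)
The plan is to reduce the question to a decomposition into indecomposable tilting $G$-modules, and then carry out that decomposition on characters in two stages: a first ``folding'' by the finite Weyl group $W$ (Brauer's formula), followed by a second folding by the affine Weyl group $\widetilde W_p$ (the linkage principle, modulo negligible tiltings).

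First I would set up the tilting reduction. Since $p>h$ and $\lambda,\mu\in A$ lie in the lowest alcove, the Weyl modules $\Delta(\lambda),\Delta(\mu)$ are simple and coincide with their dual Weyl modules, so $L(\lambda)=T(\lambda)$, $L(\mu)=T(\mu)$, and $L(\lambda)\otimes L(\mu)$ is tilting. By definition $\Ver_p(G)$ is the semisimplification of $\Tilt G$, and Proposition~\ref{PropVerBasics}(1) says its simple objects are the $L(\nu)=T(\nu)$ with $\nu\in A$; equivalently, the negligible indecomposable tiltings are exactly the $T(\xi)$ with $\xi\notin A$. Hence $M^\nu_{\lambda,\mu}$ equals the multiplicity of the summand $T(\nu)$ in $L(\lambda)\otimes L(\mu)$, and, writing $[\,\cdot\,]$ for classes in $K_0(\Ver_p(G))$, it suffices to expand $[L(\lambda)]\cdot[L(\mu)]$ in the basis $\{[L(\nu)]:\nu\in A\}$.

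Next I would compute characters in $\mZ[X(T)]^W$, which has $\mZ$-basis the Weyl characters $\chi(\xi)$ ($\xi$ dominant), extended to all weights by $\chi(w\cdot\xi)=\varepsilon(w)\chi(\xi)$ for $w\in W$ and $\chi(\xi)=0$ when $\xi+\rho$ is $W$-singular. As $\mu$ lies in the lowest alcove we have $\ch L(\mu)=\chi(\mu)$, so Brauer's formula (a formal character identity, valid over $\bk$) gives
$$\ch L(\lambda)\cdot\ch L(\mu)\;=\;\sum_{\eta}m_\lambda(\eta)\,\chi(\mu+\eta).$$
The character map $K_0(\Tilt G)\xrightarrow{\ \sim\ }\mZ[X(T)]^W$ is an isomorphism sending $[T(\xi)]$ to $\ch T(\xi)$, so every $\chi(\xi)$ determines a class $[\chi(\xi)]\in K_0(\Ver_p(G))$, and the displayed identity represents $[L(\lambda)]\cdot[L(\mu)]$. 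The input I would then invoke is the behaviour of these classes: in $K_0(\Ver_p(G))$ one has $[\chi(\xi)]=\varepsilon(w)[\chi(w\cdot\xi)]$ for every $w\in\widetilde W_p$, and $[\chi(\xi)]=0$ whenever $\xi+\rho$ lies on a wall of $\widetilde W_p$. Granting this, each summand $[\chi(\mu+\eta)]$ is either $0$ or equals $\varepsilon(w)[L(\nu)]$ for the unique pair $(\nu,w)\in A\times\widetilde W_p$ with $\mu+\eta=w\cdot\nu$; re-indexing by $\eta=w\cdot\nu-\mu$ and reading off the coefficient of $[L(\nu)]$ gives precisely $M^\nu_{\lambda,\mu}=\sum_{w\in\widetilde W_p}\varepsilon(w)\,m_\lambda(w\cdot\nu-\mu)$.

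The hard part is establishing those two relations for $[\chi(\xi)]$; this is exactly the step where the cited literature enters. They amount to saying that the kernel of $K_0(\Tilt G)\twoheadrightarrow K_0(\Ver_p(G))$ contains every $\chi(\xi)-\varepsilon(w)\chi(w\cdot\xi)$ and every $\chi(\xi)$ with $\xi+\rho$ on a $\widetilde W_p$-wall --- which follows from the linkage principle for tilting modules together with the negligibility of $T(\xi)$ for $\xi\notin A$, valid for $p>h$ by \cite[II.E]{Jantzen} and \cite[Lemma~1.1]{Gr}. In fact one only needs these relations for the weights that actually occur: since $\mu\in A$ and $\eta$ is a weight of $L(\lambda)$ with $\langle\lambda,\theta_s^\vee\rangle\le p-h$, one has $\langle\mu+\eta+\rho,\alpha^\vee\rangle<2p$ for all $\alpha\in\Phi^+$, so only tilting modules $T(\xi)$ with $\xi$ in the closure of the lowest two alcoves intervene, and the single reflection in the upper wall of $A$ already suffices to carry out the folding. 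With those inputs in hand the remainder is routine bookkeeping with Weyl characters.
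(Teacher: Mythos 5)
The paper does not actually prove this lemma: it is stated with the attribution ``results of \cite[II.E.12]{Jantzen} and \cite[Lemma~1.1]{Gr}'' and no argument is given, so any derivation you supply is necessarily ``a different route.'' Your derivation is the standard one (the quantum Racah/Verlinde formula for tilting modules) and its main line is sound: for $\lambda,\mu\in A$ and $p>h$ the objects $L(\lambda),L(\mu)$ are simple tiltings, $M^\nu_{\lambda,\mu}$ is the multiplicity of $T(\nu)$ in $L(\lambda)\otimes L(\mu)$, Brauer's formula expresses $[L(\lambda)][L(\mu)]$ as $\sum_\eta m_\lambda(\eta)[\chi(\mu+\eta)]$, and the folding relations $[\chi(\xi)]=\varepsilon(w)[\chi(w\cdot\xi)]$ together with the vanishing of $[\chi(\xi)]$ for $\xi+\rho$ singular, plus simple transitivity of $\widetilde W_p$ on alcoves, yield exactly the stated sum. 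You correctly isolate the folding relations as the nontrivial input and source them to the same references the paper cites; do note, however, that they do not follow \emph{formally} from linkage plus negligibility alone --- one also needs either the translation-functor argument or control of the Weyl multiplicities $[T(\xi):\Delta(\xi')]$, which is precisely what \cite[Lemma~1.1]{Gr} supplies. The one genuinely inaccurate sentence is the closing aside: from $\langle\mu+\eta+\rho,\alpha^\vee\rangle<2p$ you cannot conclude that only the closure of the lowest \emph{two} alcoves intervenes, since for rank at least $2$ the dominant region below the wall $\langle\cdot+\rho,\theta_s^\vee\rangle=2p$ contains further alcoves whose weights need more than one affine reflection to return to $A$. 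That remark is flagged as optional and the main argument does not rely on it, but as stated it is false and should be dropped or corrected.
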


\begin{lemma}\label{LemAlcove}
Suppose $\sigma\in\widehat W_p$ satisfies $\sigma\cdot A=A$. Then $L(\sigma\cdot0)$ is invertible, and for all $\lambda\in A$ we have
$$L(\sigma\cdot\lambda)\cong L(\sigma\cdot0)\otimes L(\lambda).$$
\end{lemma}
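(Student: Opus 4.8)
The plan is to use the tensor product multiplicity formula from Lemma~\ref{LemTensorDecomp}, pushed through the action of the extended affine Weyl group $\widehat{W}_p$ rather than just $\widetilde{W}_p$. First I would record the key group-theoretic fact: $\widehat{W}_p \cong \widetilde{W}_p \rtimes \Omega$, where $\Omega$ is the (finite, abelian) stabiliser of the fundamental alcove $A$ under the dot action; this $\Omega$ is isomorphic to $X(T)/\mZ\Phi = H$ and acts on $A$ by affine symmetries of the alcove-simplex. Since we are told $\sigma\cdot A = A$, we have $\sigma\in\Omega$. The element $L(\sigma\cdot 0)$ is then one of the objects $L(\nu)$ with $\nu\in A$, hence a genuine simple object of $\Ver_p(G)$, so the statement makes sense.

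\smallskip
Next I would prove invertibility of $L(\sigma\cdot 0)$ by computing $\dim L(\sigma\cdot 0)$ via Weyl's dimension formula and the linkage/translation principle: because $\sigma$ preserves $A$, it permutes the walls of the alcove, and the affine-reflection description shows $\langle\sigma\cdot 0 + \rho,\alpha^\vee\rangle$ runs (up to sign and shift by multiples of $p$) over exactly the same set of values as $\langle\rho,\alpha^\vee\rangle$ as $\alpha$ ranges over $\Phi^+$; plugging into Weyl's formula gives $\dim L(\sigma\cdot 0) = 1$ in $\bk$ (more precisely, the product telescopes to $\pm 1$, and by Lemma~\ref{LemInvDim} the sign is irrelevant to invertibility since we only need $L^*\otimes L\cong\unit$). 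Actually the cleaner route is: by Lemma~\ref{LemSymInv}-type reasoning it suffices to exhibit $L(\sigma\cdot0)\otimes L(\sigma^{-1}\cdot 0)\cong\unit$, which will drop out of the general isomorphism below applied twice. So I would instead establish the tensor identity first and deduce invertibility as the special case $\lambda = \sigma^{-1}\cdot 0$ (note $\sigma^{-1}\in\Omega$ too).

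\smallskip
The heart of the argument is the isomorphism $L(\sigma\cdot\lambda)\cong L(\sigma\cdot 0)\otimes L(\lambda)$ for all $\lambda\in A$. I would compute the multiplicity $M^{\sigma\cdot\lambda}_{\sigma\cdot 0,\lambda}$ using Lemma~\ref{LemTensorDecomp}:
\[
M^{\sigma\cdot\lambda}_{\sigma\cdot 0,\lambda} = \sum_{w\in\widetilde W_p}\varepsilon(w)\, m_{\sigma\cdot 0}\big(w\cdot(\sigma\cdot\lambda) - \lambda\big).
\]
Here $L(\sigma\cdot 0)$ has all weight multiplicities $m_{\sigma\cdot 0}(x)\in\{0,1\}$ because $\sigma\cdot 0$ is a minuscule weight — indeed $\sigma\in\Omega$ sends the lowest alcove point $0$ to a vertex of $A$, and the vertices of the fundamental alcove other than the origin are precisely the minuscule fundamental weights (cf. the characterisation recalled in \ref{SecZG}), whose $L(\cdot)$ have all weight spaces $1$-dimensional. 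One then checks that the only term surviving in the sum is $w = \sigma'$ for the unique $\sigma'\in\widetilde W_p$ with $w\cdot(\sigma\cdot\lambda)$ in the closure of $A$, giving $w\cdot(\sigma\cdot\lambda)-\lambda$ equal to an extremal (hence multiplicity-one) weight of $L(\sigma\cdot 0)$, and $\varepsilon(w)=+1$; all other $w$ give weights outside the weight set of $L(\sigma\cdot 0)$ or cancel in pairs. Hence $M^{\sigma\cdot\lambda}_{\sigma\cdot 0,\lambda}=1$. Comparing categorical dimensions (or FP-dimensions) of both sides of $L(\sigma\cdot 0)\otimes L(\lambda)$ — using $\dim(\sigma\cdot 0)=\pm1$ from Weyl's formula as above and $\dim L(\sigma\cdot\lambda) = \pm\dim L(\lambda)$, again from Weyl's formula since $\sigma$ permutes the factors $\langle\lambda+\rho,\alpha^\vee\rangle$ up to signs — forces $L(\sigma\cdot\lambda)$ to be the \emph{only} constituent, so the tensor product is simple and equals $L(\sigma\cdot\lambda)$. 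Invertibility of $L(\sigma\cdot 0)$ then follows by taking $\lambda = \sigma^{-1}\cdot 0\in A$: $L(\sigma\cdot 0)\otimes L(\sigma^{-1}\cdot 0)\cong L(\sigma\sigma^{-1}\cdot 0) = L(0) = \unit$.

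\smallskip
The main obstacle I anticipate is the bookkeeping in the multiplicity computation — specifically, verifying that exactly one $w\in\widetilde W_p$ contributes and that its sign is $+1$. This is where one must use carefully that $\sigma\cdot 0$ is minuscule (all weights in a single Weyl orbit, multiplicity one) together with the geometry of how $\widetilde W_p$ acts on $A$: the weights $w\cdot(\sigma\cdot\lambda)-\lambda$ that land in the weight support of $L(\sigma\cdot 0)$ correspond to the at-most-one way of translating $\sigma\cdot\lambda$ back into $\overline{A}$ by the dot action, and the remaining lattice points either fall outside the (convex) weight polytope of the minuscule $L(\sigma\cdot 0)$ or are killed by the sign-reversing symmetry. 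An alternative, slicker packaging avoiding the explicit sum is to invoke that $\Ver_p(G)$ is a fusion category whose group of invertibles is identified (via the fundamental group / the description in \ref{SecZG}) with $\Omega\cong H$, with the invertible labelled by $\sigma\in\Omega$ being exactly $L(\sigma\cdot 0)$ and tensoring by it implementing the $\Omega$-action on the simple objects; but since the excerpt is building that identification, the hands-on computation via Lemma~\ref{LemTensorDecomp} is the honest route and I would present that.
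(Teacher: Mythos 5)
The paper does not actually prove this lemma: it is quoted as a known result, with references to \cite[II.E.12]{Jantzen} and \cite[Lemma~1.1]{Gr}, so there is no in-paper argument to compare against. Judged on its own terms, your proposal has a genuine gap at its central step. You assert that $L(\sigma\cdot 0)$ has all weight multiplicities in $\{0,1\}$ ``because $\sigma\cdot 0$ is a minuscule weight.'' But $\sigma\cdot 0=(p-h)\varpi$ for a minuscule weight $\varpi$ (this is exactly Proposition~\ref{PropInvertibles}(1) and Table~\ref{TabMinuscule}); the weight $(p-h)\varpi$ itself is \emph{not} minuscule unless $p=h+1$, and $L((p-h)\varpi)$ need not have multiplicity-free weight spaces. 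For instance in type $C_2$ with $p=7$ one gets $L(3\varpi_1)\cong\Sym^3V$ for the natural $4$-dimensional representation $V$ of $\Sp_4$, whose weight $\epsilon_1$ has multiplicity $2$. You have conflated ``the vertex of the alcove hit by $\sigma$ corresponds to a minuscule weight'' with ``the highest weight $\sigma\cdot 0$ is minuscule.'' Since your entire bookkeeping in the alternating sum of Lemma~\ref{LemTensorDecomp} (``exactly one $w$ survives, with sign $+1$; the rest fall outside the weight polytope or cancel in pairs'') leans on the convexity-plus-multiplicity-one structure of a minuscule module, that verification collapses; as written it is an assertion, not a proof, and it is precisely the hard part of the lemma.

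A secondary issue: your dimension comparison uses categorical dimensions valued in $\bk$ of characteristic $p$, where $\dim L(\sigma\cdot\lambda)=\pm\dim L(\lambda)$ does not by itself preclude additional constituents, since dimensions can cancel mod $p$. To make the ``multiplicity at least one plus dimension count'' strategy airtight you would need Frobenius--Perron dimensions (positive reals, invariant under $\sigma$ by the quantum Weyl formula), at which point $\FPdim L(\sigma\cdot 0)=1$ already gives invertibility and simplicity of $L(\sigma\cdot 0)\otimes L(\lambda)$ directly. The remaining content — identifying that simple object as $L(\sigma\cdot\lambda)$ rather than some other simple — still requires either a highest-weight/translation-functor argument or the character-theoretic computation you sketched, correctly carried out. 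In short: the architecture is reasonable and matches the standard references, but the proof as proposed does not go through.
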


So for any $\sigma\in\widehat W_p$ sending $A$ to itself, there is an invertible $L(\sigma\cdot0)$ whose action on $A$ via the tensor product is the same as $\sigma$. Now we show that this produces all invertible objects, and moreover the invertibles are in one-to-one correspondence with minuscule weights.

\pagebreak

\begin{prop}\label{PropInvertibles}
The following are equivalent for $\mu\in A$:
\begin{enumerate}
\item $\mu=(p-h)\varpi$ for some minuscule weight $\varpi$.
\item $L(\mu)$ is invertible in $\Ver_p(G)$.
\item $\Hom(L(\theta_l),L(\mu)\otimes L(\mu)^*)=0$ in $\Ver_p(G)$.
\end{enumerate}
Moreover, $L(\mu)\mapsto\mu+\mZ\Phi$ is a bijection between invertibles and cosets of the root lattice $\mZ\Phi$ in $X(T)$.
\end{prop}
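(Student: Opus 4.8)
The plan is to prove the implications $(1)\Rightarrow(2)\Rightarrow(3)\Rightarrow(1)$ and then to deduce the bijection from the description of the invertibles obtained along the way. Recall that for $\mu\in A$ the simple object $L(\mu)$ is the tilting module $T(\mu)$, and that for $p>h$ the weights of the adjoint module $L(\theta_l)$ are the roots $\alpha\in\Phi$ (each of multiplicity $1$) together with $0$ (of multiplicity $r=\operatorname{rank}G$). The case $p=h+1$ is degenerate: then $L(\theta_l)=0$ in $\Ver_p(G)$ by Lemma~\ref{LemAdjointTilting}, so $(3)$ holds for all $\mu\in A$, while $A=\{\lambda\in X(T)^+\mid\langle\lambda,\theta_s^\vee\rangle\le1\}$ is precisely $\{0\}$ together with the minuscule weights, so $(1)$ holds for all $\mu\in A$; with $(1)\Rightarrow(2)$ this also shows $\Ver_{h+1}(G)$ is pointed. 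Assume henceforth $p>h+1$.

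For $(1)\Rightarrow(2)$ I would use the standard structure of $\widehat W_p$ (see e.g.\ \cite[Ch.~VI]{Bo}): since $\widetilde W_p$ acts simply transitively on alcoves, the stabilizer $\Omega\subseteq\widehat W_p$ of the fundamental alcove maps isomorphically onto $\widehat W_p/\widetilde W_p\cong X(T)/\mZ\Phi$, and $\sigma\mapsto\sigma\cdot0$ identifies $\Omega$ with the special vertices of the closed alcove $\overline A$; since $k_i'=\langle\varpi_i,\theta_s^\vee\rangle=1$ exactly when $\varpi_i$ is minuscule (condition~(3) in \S\ref{SecZG}), these special vertices are exactly the weights $(p-h)\varpi$ with $\varpi$ minuscule (including $\varpi=0$). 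Thus, given minuscule $\varpi$, a choice of $\sigma\in\Omega$ with $\sigma\cdot0=(p-h)\varpi$ satisfies $\sigma\cdot A=A$, and Lemma~\ref{LemAlcove} gives that $L((p-h)\varpi)=L(\sigma\cdot0)$ is invertible. The implication $(2)\Rightarrow(3)$ is immediate: if $L(\mu)$ is invertible then $L(\mu)\otimes L(\mu)^*\cong\unit$, and $\Hom(L(\theta_l),\unit)=0$ because, by Lemma~\ref{LemAdjointTilting}, $L(\theta_l)$ is a non-zero simple object distinct from $\unit$ (as $\theta_l\ne0$).

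The substance is $(3)\Rightarrow(1)$. By rigidity $\dim\Hom(L(\theta_l),L(\mu)\otimes L(\mu)^*)$ equals the multiplicity $M$ of $L(\mu)$ in $L(\theta_l)\otimes L(\mu)$, and Lemma~\ref{LemTensorDecomp} gives $M=\sum_{w\in\widetilde W_p}\varepsilon(w)\,m_{\theta_l}(w\cdot\mu-\mu)$, a finite sum over the $w$ with $w\cdot\mu-\mu\in\Phi\cup\{0\}$. The key geometric claim I must establish is that such a $w\ne e$ is one of the $r+1$ reflections $s_{\alpha_1,0},\dots,s_{\alpha_r,0},s_{\theta_s,p}$ in the walls of the fundamental alcove: writing $w\cdot\mu-\mu=w(\mu+\rho)-(\mu+\rho)$ with $\mu+\rho$ an interior point of the fundamental alcove, a reflection hyperplane $\{\langle x,\beta^\vee\rangle=kp\}$ can strictly separate $\mu+\rho$ from $(\mu+\rho)+\gamma$ (for a root $\gamma$) only if $k\in\{0,1\}$ and $|\langle\gamma,\beta^\vee\rangle|\ge2$; outside type $G_2$ one has $|\langle\gamma,\beta^\vee\rangle|\le2$ with equality only for $\gamma=\pm\beta$, and then a short check shows no two such hyperplanes can coexist, so $\overline A$ and $w(\overline A)$ are adjacent and $w$ is a wall reflection (type $G_2$, where $|\langle\gamma,\beta^\vee\rangle|$ may be $3$, needs a separate direct verification). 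Granting this and using $s_{\alpha_i,0}\cdot\mu-\mu=-\langle\mu+\rho,\alpha_i^\vee\rangle\alpha_i$ and $s_{\theta_s,p}\cdot\mu-\mu=(p-\langle\mu+\rho,\theta_s^\vee\rangle)\theta_s$, one finds $M=r-\#\{i:\langle\mu,\alpha_i^\vee\rangle=0\}-\delta=\#\operatorname{supp}(\mu)-\delta$, where $\delta=1$ if $\langle\mu,\theta_s^\vee\rangle=p-h$ (i.e.\ $\mu$ lies on the far wall of $A$) and $\delta=0$ otherwise. Hence $M=0$ forces either $\mu=0$, or $\mu=c\varpi_j$ with $c\ge1$ lying on the far wall, so that $ck_j'=p-h$ and thus $k_j'\mid p-h$. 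But $k_j'\le h-1<p$, and every prime divisor of $k_j'$ divides $h$ (inspection of the affine Dynkin diagram marks in each type), so $k_j'\mid p-h$ together with $k_j'\mid h$ would force $k_j'\mid p$; hence $k_j'=1$, $\varpi_j$ is minuscule, and $\mu=(p-h)\varpi_j$. This proves $(1)$.

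The hard part is the geometric claim ruling out $\ell(w)\ge2$; it is elementary but needs the case analysis sketched above (and the separate look at $G_2$), and it is what makes the formula for $M$ exact rather than a mere bound. For the final assertion, $(1)\Leftrightarrow(2)$ shows the invertibles are exactly the $L(\sigma\cdot0)$ with $\sigma\in\Omega$, and $\sigma\mapsto\sigma\cdot0$ is injective on $\Omega$ since $\rho$ is a regular point for $\widehat W_p$ when $p>h$; writing $\sigma$ as translation by $p\nu$ composed with $w\in W$ gives $\sigma\cdot0=w(\rho)-\rho+p\nu\equiv p\nu\pmod{\mZ\Phi}$, so under the isomorphism $\Omega\cong X(T)/\mZ\Phi$ the assignment $L(\mu)\mapsto\mu+\mZ\Phi$ becomes multiplication by $p$ on $X(T)/\mZ\Phi$, which is a bijection because $p$ is prime and the exponent of $X(T)/\mZ\Phi$ is at most $h<p$. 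This yields the claimed bijection between invertible objects and cosets of $\mZ\Phi$ in $X(T)$.
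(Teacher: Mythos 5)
Your overall architecture $(1)\Rightarrow(2)\Rightarrow(3)\Rightarrow(1)$ and the use of Lemmas~\ref{LemAlcove} and \ref{LemTensorDecomp} match the paper's proof. Two of your sub-arguments are genuinely different and cleaner than what the paper does: for $(1)\Rightarrow(2)$ you invoke the abstract isomorphism between the stabilizer $\Omega$ of the fundamental alcove and $X(T)/\mZ\Phi$ and identify $\sigma\cdot 0$ with the special vertices, where the paper instead exhibits explicit elements $\sigma$ type by type in Table~\ref{TabMinuscule}; and for the final bijection you observe that $L(\sigma\cdot0)\mapsto\sigma\cdot0+\mZ\Phi$ is multiplication by $p$ on $X(T)/\mZ\Phi$, hence bijective since $p>h\ge|X(T)/\mZ\Phi|$, where the paper again defers to a case check. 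Both of these are correct and arguably preferable.

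However, there is a genuine gap in your treatment of $(3)\Rightarrow(1)$, precisely at the step you flag as the hard part. Your assertion that outside type $G_2$ one has $|\langle\gamma,\beta^\vee\rangle|\le2$ with equality \emph{only} for $\gamma=\pm\beta$ is false in the non-simply-laced types $B_r$, $C_r$, $F_4$: if $\gamma$ is a long root and $\beta$ a short root with $(\gamma,\beta)\ne0$, then $\langle\gamma,\beta^\vee\rangle=\pm2$ while $\gamma\ne\pm\beta$ (e.g.\ $\gamma=e_1+e_2$, $\beta=e_1$ in $B_2$). Consequently, for a long root $\gamma$ there are several candidate separating hyperplanes, and the deferred ``short check shows no two such hyperplanes can coexist'' has many more cases than you acknowledge; it does go through, but only because the simultaneous separation conditions force $\nu=\mu+\rho$ onto a reflection hyperplane (e.g.\ $\langle\nu,e_1^\vee\rangle=\langle\nu,e_2^\vee\rangle=p-1$ forces $\langle\nu,(e_1-e_2)^\vee\rangle=0$), contradicting $\mu\in A$. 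As written, your exact formula $M=\#\operatorname{supp}(\mu)-\delta$ is therefore not justified. Two remarks on repairing this. First, your argument is more robust than you present it: even-length $w$ contribute $+1$ to $M$, so you only need to control the \emph{odd}-length contributions, i.e.\ show every contributing $w\ne e$ of odd length is a wall reflection; then $M\ge\#\operatorname{supp}(\mu)-\delta$ still forces $\#\operatorname{supp}(\mu)\le\delta\le1$ and the conclusion follows unchanged. Second, this is exactly how the paper sidesteps the issue: it shows only that any contributing $w$ has gallery length $\le2$, via the observation that along a minimal gallery the norms $\|\mu_i-\mu\|$ strictly increase through values that must lie in $\mZ\Phi$, so after three steps the displacement is longer than any root. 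You should either adopt that norm argument or carry out the full case analysis of coexisting separating hyperplanes in types $B$, $C$, $F_4$ (in addition to $G_2$). The remainder of your proof, including the divisibility argument $k_j'\mid p-h$ forcing $k_j'=1$, agrees with the paper and is correct.
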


\begin{proof}
We may assume $G$ is simply connected, since $L(\lambda)$ is invertible in $\Ver_p(G')$ for $G'=G/H$ and $H\subseteq Z(G)$ if and only if it is invertible in $\Ver_p(G)$. For each non-zero minuscule weight $\varpi$, an element $\sigma\in\widehat W_p$ with $\sigma\cdot A=A$ and $\sigma\cdot0=(p-h)\varpi$ is given in Table~\ref{TabMinuscule}. This means $L((p-h)\varpi)$ is invertible by Lemma~\ref{LemAlcove}, so (1) implies (2). That (2) implies (3) is clear from the fact that $L(\mu)\otimes L(\mu)^*\cong\unit$ for $L(\mu)$ invertible.

We now show that (3) implies (1). If $L(\theta_l)$ is zero then by Lemma~\ref{LemAdjointTilting} we have $p=h+1$, but then $\langle\lambda,\theta_s^\vee\rangle\in\{0,1\}$ for every $\lambda\in A$, so every simple in $\Ver_p(G)$ is of the form $L((p-h)\varpi)=L(\varpi)$ for minuscule $\varpi$. So we may assume $p>h+1$ and $L(\theta_l)\neq0$. Our method is identical to \cite[Lemma 3]{Sa} which gives a similar result for quantum groups. By adjunction, $\Hom(L(\theta_l),L(\mu)\otimes L(\mu)^*)=0$ if and only if $L(\mu)$ is not a summand of $L(\theta_l)\otimes L(\mu)$, that is $M_{\theta_l,\mu}^\mu=0$. The weights of $L(\theta_l)$ are the roots (each with multiplicity 1) and zero (with multiplicity the rank $r$ of $G$), so by the formula in Lemma~\ref{LemTensorDecomp} we have
$$M_{\theta_l,\mu}^\mu\;=\; r-\# S_{\rm odd}+\# S_{\rm even}$$
where $S_{\rm odd}\subset\Phi$ (resp. $S_{\rm even}$) is the set of roots $\alpha$ such that $\mu+\alpha=w\cdot\mu$ for some $w\in\widetilde W_p$ equal to an odd (resp. even) product of reflections. Then $M_{\theta_l,\mu}^\mu=0$ implies $\# S_{\rm odd}\geq r$.

We claim that all roots $\alpha\in S_{\rm odd}$ have $\mu+\alpha=s\cdot\mu$ for some reflection $s$ through a wall of the fundamental alcove. To prove this, suppose $\alpha\in S_{\rm odd}\cup S_{\rm even}$ and consider a sequence $\mu=\mu_0,\mu_1,\mu_2,\dots,\mu_\ell=\mu+\alpha$ in $X(T)$ where each step from $\mu_i$ to $\mu_{i+1}$ is a reflection in $\widetilde W_p$ through a wall of the alcove containing $\mu_i$. We have $\mu_{i+1}-\mu\in\mZ\Phi$ for all $i$, since
$$\mu_{i+1}\;=\;s_\beta\cdot\mu_i+\gamma\;=\;\mu_i-\langle\mu_i+\rho,\beta^\vee\rangle\beta+\gamma\;\in\;\mu_i+\mZ\Phi$$
for some $\beta\in\Phi$ and $\gamma\in\mZ\Phi$. However, if we assume that this sequence is minimal, then the value $\|\mu_i-\mu\|$ strictly increases with $i$, where $\|\lambda\|=\sqrt{(\lambda,\lambda)}$ is the geometric length of a weight $\lambda$. This means that $\|\mu_1-\mu\|$ is at least the length of a short root, $\|\mu_2-\mu\|$ is at least the length of a long root, and if $\ell\geq3$ then $\|\mu_\ell-\mu\|=\|\alpha\|$ is greater than the length of any root in $\Phi$, a contradiction. Thus all $\alpha\in S_{\rm odd}$ have $\ell=1$, proving the claim.

The reflections $s_{\alpha_i}$ for each simple root $\alpha_i$ have $\mu-\alpha_i=s_{\alpha_i}\cdot\mu$ if and only if $\langle\mu,\alpha_i^\vee\rangle=0$. The only other reflection through a wall of the fundamental alcove is through the far wall, which sends $\mu$ to $\mu+\theta_s$ if and only if $\langle\mu,\theta_s^\vee\rangle=p-h$. In order to have $\# S_{\rm odd}\geq r$, at least $r$ of these conditions on $\mu$ must be satisfied. If $\langle\mu,\alpha_i^\vee\rangle=0$ for all $r$ simple roots $\alpha_i$, then $\mu=0$ and we are done. Otherwise, we must have $\langle\mu,\theta_s^\vee\rangle=p-h$ and $\langle\mu,\alpha_i^\vee\rangle=0$ for $r-1$ simple roots, meaning $\mu=k\varpi$ for some fundamental weight $\varpi$ and $k=(p-h)/\langle\varpi,\theta_s^\vee\rangle$. But $p$ is prime, and it can be checked that $h$ is a multiple of some prime factor of $\langle\varpi,\theta_s^\vee\rangle$ for any fundamental $\varpi$, so we require $\langle\varpi,\theta_s^\vee\rangle=1$ to have $k\in\mZ$. Thus $\varpi$ is minuscule, completing the proof that (3) implies (1). Finally, one can check case by case that the bijection $L(\mu)\mapsto\mu+\mZ\Phi$ in the lemma holds using Table~\ref{TabMinuscule}.
\end{proof}

\begin{remark}
A similar classification of invertibles is found for quantum groups with a truncated tensor product in \cite{Sa}. For simply-laced types ($A,D,E$) the locations of these invertibles are identical to those in Proposition~\ref{PropInvertibles}, but the locations are different for the other types. This is because the fundamental alcove is replaced by a larger set of weights where the far wall is orthogonal to the highest root $\theta_l$ instead of $\theta_s$.
\end{remark}

\subsubsection{Table of minuscule weights}\label{TabMinuscule} Below we list all of the minuscule weights in each type, and their corresponding symmetries of the fundamental alcove in $\widehat W_p$ as in Proposition~\ref{PropInvertibles} (these were verified using a Mathematica program). In expressions for $\sigma$, $t_\lambda$ denotes the translation by the weight $\lambda$. Note that we use the labelling conventions from Table~\ref{TabGroups}, and the groups $E_8$, $F_4$ and $G_2$ do not have any non-zero minuscule weights.

\vspace{0.5em}{\noindent
\begin{tabular}{r|r|l}
Type & Minuscule $\varpi$ & $\sigma\in\widehat W_p$ with $\sigma\cdot A=A$ and $\sigma\cdot0=(p-h)\varpi$ \\\hline
$A_r$ & $\varpi_i\ \forall i$ & $t_{p\varpi_i}(s_1s_2\cdots s_r)^i$\\\hline
$B_r$ & $\varpi_r$ & $t_{p\varpi_r}w_rw_{r-1}\cdots w_1$ where $w_i=s_is_{i+1}\cdots s_r$\\\hline
$C_r$ & $\varpi_1$ & $t_{p\varpi_1}s_1s_2\cdots s_{r-1}s_rs_{r-1}\cdots s_1$\\\hline
\multirow{4}{*}{$D_r$} & $\varpi_1$ & $t_{p\varpi_1}s_1s_2\cdots s_{r-2}s_rs_{r-1}\cdots s_1$\\
& $\varpi_{r-1}$ & $t_{p\varpi_{r-1}}s_{r-1}w^+_{r-2}w^-_{r-3}w^+_{r-4}w^-_{r-5}\cdots w^\pm_1$\\
& $\varpi_r$ & $t_{p\varpi_r}s_rw^-_{r-2}w^+_{r-3}w^-_{r-4}w^+_{r-5}\cdots w^\mp_1$\\
& & where $w^+_i=s_is_{i+1}\cdots s_{r-2}s_r$ and $w^-_i=s_is_{i+1}\cdots s_{r-2}s_{r-1}$\\\hline
\multirow{2}{*}{$E_6$} & $\varpi_1$ & $t_{p\varpi_1}s_1s_2s_3s_4s_5s_6s_3s_2s_4s_3s_6s_1s_2s_3s_4s_5$\\
& $\varpi_5$ & $t_{p\varpi_5}s_5s_4s_3s_2s_1s_6s_3s_2s_4s_3s_6s_5s_4s_3s_2s_1$\\\hline
$E_7$ & $\varpi_1$ & $t_{p\varpi_1}s_1s_2s_3s_4s_5s_6s_7s_4s_5s_3s_4s_2s_1s_7s_3s_2s_4s_3s_5s_4s_7s_6s_5s_4s_3s_2s_1$\\\hline
\end{tabular}}\vspace{0.5em}

\begin{prop}\label{PropVerRep}
Suppose $G$ is an adjoint-type group. If $\fg$ is the image of ${\rm Lie}(G)$ in $\Ver_p^+$ and $\phi:L_2\to\fg$ is the constraint coming from a principal $\mathfrak{sl}_2$ of ${\rm Lie}(G)$, then we have $\Ver_p(G)\simeq\Rep_{\Ver_p^+}(\fg,\phi)$. Moreover, $\Ver_p(G)$ has no non-trivial proper tensor subcategories.
\end{prop}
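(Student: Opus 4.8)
The plan is to recognise the principal-$\SL_2$ functor of Proposition~\ref{PropVerBasics}(3) as a fibre functor on $\Ver_p(G)$ valued in $\Ver_p^+$, reconstruct $\Ver_p(G)$ from it as representations of an affine group scheme in $\Ver_p^+$, and then use the fact that $p>h$ forces all relevant weights into the lowest alcove to identify this group scheme with the Lie algebra $\fg$. First I would dispose of reductions: since $G$ is adjoint, $X(T)=\mZ\Phi$, so $\Ver_p^+(G)=\Ver_p(G)$, every simple object is $L(\lambda)$ with $\lambda\in A$, and by Proposition~\ref{PropInvertibles} the only invertible object is $\unit$. If $p=h+1$ then $p-h=1$ and the description of $A$ in \ref{SecWeightsA} forces $A=\{0\}$ (a non-zero dominant weight with $\langle\lambda,\theta_s^\vee\rangle=1$ would be minuscule, hence outside $\mZ\Phi$), so $\Ver_p(G)=\Vecc$, $\fg=0$ by Lemma~\ref{LemAdjointTilting}, and both assertions are trivial; this also covers $G_2$ with $p=7$. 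So I may assume $L(\theta_l)\neq0$.

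Next I would establish the fibre functor. Consider $\omega\colon\Ver_p(G)\to\Ver_p$ from Proposition~\ref{PropVerBasics}(3); since $G$ is adjoint every weight lies in $\mZ\Phi$ and so maps to an even integer, whence $\omega$ factors through $\Ver_p^+$ (the argument of Lemma~\ref{LemAdjointTilting}). I claim $\omega$ is faithful. As $\Ver_p(G)$ is semisimple this amounts to $\omega L(\lambda)\neq0$ for all $\lambda\in A$; since $\omega$ is a symmetric tensor functor it preserves categorical dimension, so $\dim\omega L(\lambda)=\dim L(\lambda)$, and for $\lambda\in A$ with $p>h$ the Weyl module $\Delta(\lambda)=L(\lambda)$ is simple, so
\[\dim L(\lambda)\ \equiv\ \prod_{\alpha>0}\frac{\langle\lambda+\rho,\alpha^\vee\rangle}{\langle\rho,\alpha^\vee\rangle}\pmod p.\]
Every $\langle\lambda+\rho,\alpha^\vee\rangle$ lies in $\{1,\dots,p-1\}$ and every $\langle\rho,\alpha^\vee\rangle$ lies in $\{1,\dots,h-1\}$ (as $\langle\rho,\theta_s^\vee\rangle=h-1$ and every positive coroot is $\le\theta_s^\vee$), so this product is a unit in $\bk$ and $\omega L(\lambda)\neq0$. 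Thus $\omega$ is a fibre functor valued in $\Ver_p^+$, and by Tannakian reconstruction over $\Ver_p^+$ (\cite{CEO3}, generalising \cite{Del01}) there is an affine group scheme $\mathbf{G}$ in $\Ver_p^+$ with a constraint $c\colon\pi(\Ver_p^+)\to\mathbf{G}$ and an equivalence $\Ver_p(G)\simeq\Rep_{\Ver_p^+}(\mathbf{G},c)$ carrying $\omega$ to the forgetful functor.

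The third step, which I expect to be the main obstacle, is passing from $\mathbf{G}$ to $\fg$. Every $L(\lambda)$ with $\lambda\in A$ is a restricted simple $G$-module, so it stays simple on restriction to the Frobenius kernel $G_1$; pushing this through $\omega$ should show that $\mathbf{G}$ is infinitesimal of height $\le1$, and hence that $\Rep_{\Ver_p^+}(\mathbf{G},c)=\Rep_{\Ver_p^+}(\mathrm{Lie}\,\mathbf{G},\mathrm{Lie}\,c)$. The adjoint action of $\mathrm{Lie}(G)$ on every $G$-module is compatible with tensor products, so applying $\omega$ gives a Lie algebra map $\fg=\omega(\mathrm{Lie}\,G)\to\mathrm{Lie}\,\mathbf{G}$ compatible with the $[p]$-operations; the hard part is to check this is an isomorphism and that $\mathrm{Lie}\,c$ is identified with $\phi$ (both are images of the inclusion of the principal $\mathfrak{sl}_2$, which by \ref{SecVerSetup} realises $\fg_\pi$). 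Concretely this says the reconstructed group scheme acquires no infinitesimal symmetries beyond those coming from $\mathrm{Lie}(G)$, and it is exactly here that the lowest-alcove hypothesis $p>h$ is indispensable. Granting this, $\Ver_p(G)\simeq\Rep_{\Ver_p^+}(\fg,\phi)$.

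Finally, for the absence of non-trivial proper tensor subcategories: let $\cD$ be a tensor subcategory of $\Ver_p(G)$ with $\cD\neq\Vecc$. Then $\cD$ contains some non-invertible simple object $X$ (the only invertible being $\unit$), and by Proposition~\ref{PropInvertibles} (contrapositive of (2)$\Rightarrow$(3)) we have $\Hom(L(\theta_l),X\otimes X^*)\neq0$, so $L(\theta_l)\cong\fg\in\cD$. Now $\fg$, as the adjoint representation of $\mathbf{G}$, is faithful: its kernel has Lie algebra $\mathfrak{z}(\fg)=\omega(\mathfrak{z}(\mathrm{Lie}\,G))=0$ (using exactness and faithfulness of $\omega$, and that $\mathrm{Lie}(G)\cong L(\theta_l)$ is a simple $G$-module whose centre, being an ideal, vanishes), and a height-$\le1$ group scheme with trivial Lie algebra is trivial; since a faithful representation tensor-generates the whole representation category, $\cD=\Ver_p(G)$. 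Hence the only tensor subcategories of $\Ver_p(G)$ are $\Vecc$ and itself.
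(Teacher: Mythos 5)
Your overall architecture (fibre functor to $\Ver_p^+$, Tannakian reconstruction, then identification of the reconstructed group scheme with $\fg$) is reasonable, and your reductions, the faithfulness check, and the final "every non-trivial subcategory contains $L(\theta_l)$" step via Proposition~\ref{PropInvertibles} are all sound. But the proposal has a genuine gap exactly where you flag it: the identification of the reconstructed group scheme $\mathbf{G}$ (equivalently, of the Harish-Chandra pair $(\pi_0,\fg_\pi)$ of the fundamental group of $\Ver_p(G)$ over $\Ver_p^+$) with $(1,\fg)$ is asserted under "Granting this" and never argued. This identification \emph{is} the content of the proposition. Your proposed route --- that restricted simples remain simple over the Frobenius kernel $G_1$, hence $\mathbf{G}$ is infinitesimal of height $\le1$, hence $\Rep(\mathbf{G})=\Rep(\mathrm{Lie}\,\mathbf{G})$, hence $\mathrm{Lie}\,\mathbf{G}=\fg$ --- is not carried out at any step: nothing in the proposal rules out an \'etale part $\pi_0$ of the fundamental group, the "stays simple over $G_1$" heuristic does not obviously survive semisimplification, and surjectivity of the map $\fg\to\mathrm{Lie}\,\mathbf{G}$ (i.e.\ that the reconstructed group acquires no extra infinitesimal symmetries) is precisely the hard point you defer.

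For comparison, the paper closes this gap without ever analysing $\mathbf{G}$ directly, by a two-sided generation argument. First, $\fg$ generates $\Rep(\fg,\phi)$ as a tensor category: by \cite[Theorem~4.3.1]{CEO3} the subcategory generated by $\fg$ is $\Rep(\fg/\fg',\phi)$ for an ideal $\fg'$, and since $\fg$ has no trivial summands in $\Ver_p^+$ the principal $L_2$ acts non-trivially on any non-zero $\fg'$, so by antisymmetry $\fg'$ acts non-trivially on $\fg$, forcing $\fg'=0$. Second, the action of $\mathrm{Lie}(G)$ on tilting modules induces a tensor functor $F\colon\Ver_p(G)\to\Rep(\fg,\phi)$ whose image contains $\fg$, so $F$ is surjective and corresponds to a normal injection $(1,\fg)\hookrightarrow(\pi_0,\fg_\pi)$ ($\fg$ is an ideal of $\fg_\pi$ because $\fg_\pi$ acts on the object $\fg\in\Ver_p(G)$ compatibly with the bracket). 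The quotient $\Rep((\pi_0,\fg_\pi/\fg),\phi)$ is then a tensor subcategory of $\Ver_p(G)$ not containing $\fg$; since by Proposition~\ref{PropInvertibles} every non-unit simple generates $L(\theta_l)=\fg$, that subcategory is $\Vecc$, whence $(\pi_0,\fg_\pi)=(1,\fg)$. You would need to supply an argument of comparable substance (or reproduce this one) before your step three, and your final faithfulness argument for the adjoint representation of $\mathbf{G}$ also depends on the unproven height-$\le1$ claim, so it inherits the same gap.
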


\begin{proof}
First we show that the object $\fg\in\Rep(\fg,\phi)$ with the adjoint representation generates $\Rep(\fg,\phi)$. By \cite[\S 7.1]{Ve1}, representations of $\fg$ in $\Ver_p^+$ are precisely representations of the affine group scheme that corresponds to the Harish-Chandra pair $(1,\fg)$, with $1$ the trivial group. Combining this with \cite[Theorem 4.3.1]{CEO3}, the tensor subcategory of $\Rep(\fg,\phi)$ generated by $\fg$ is of the form $\Rep(\fg/\fg',\phi)$ for some Lie algebra ideal $\fg'\subseteq\fg$. As seen in Table~\ref{TabImages}, the image of $\fg$ in $\Ver_p^+$ contains no summands isomorphic to $\unit$, and hence the action of the principal $\mathfrak{sl}_2\subseteq\fg$ on $\fg'$ is non-zero. So by antisymmetry, the Lie bracket restricted to $\fg'\otimes\mathfrak{sl}_2$ is non-zero, and hence the action of $\fg'$ on $\fg$ is non-zero. This contradicts $\fg\in\Rep(\fg/\fg',\phi)$ unless $\fg'=0$, hence $\Rep(\fg/\fg',\phi)=\Rep(\fg,\phi)$ and $\fg$ generates $\Rep(\fg,\phi)$.

The action of ${\rm Lie}(G)$ on a tilting $G$-representation maps to an action of $\fg$ on its image in $\Ver_p^+$, and since this image factors through $\Tilt\SL_2$, it respects the constraint $\phi$. This gives a tensor functor $F:\Ver_p(G)\to\Rep(\fg,\phi)$, and since the image of this functor contains $\fg$ it must be surjective. Let $\pi$ be the fundamental group of $\Ver_p(G)$ considered as an affine group scheme in $\Ver_p^+$, and $(\pi_0,\fg_\pi)$ the corresponding Harish-Chandra pair by the theory of \cite{Ve1}. By \cite[Theorem 4.2.1]{CEO3}, $F$ corresponds to an injective morphism $(1,\fg)\to(\pi_0,\fg_\pi)$. Since $\fg$ is an object in $\Ver_p(G)$, we have an action of $\fg_\pi$ on $\fg$ commuting with the Lie bracket, and hence $\fg$ is an ideal of $\fg_\pi$. This means $(1,\fg)$ is normal in $(\pi_0,\fg_\pi)$, so by \cite[Theorem 4.3.1]{CEO3} again we have tensor subcategory $\Rep((\pi_0,\fg_\pi/\fg),\phi)$ of $\Ver_p(G)$. Now $\fg$ cannot be in this subcategory since its action on itself is non-zero, but every simple except $\unit$ in $\Ver_p(G)$ generates $\fg=L(\theta_l)$ by Proposition~\ref{PropInvertibles}. So we must have $\Rep((\pi_0,\fg_\pi/\fg),\phi)\simeq\Vecc$ and hence $(\pi_0,\fg_\pi)=(1,\fg)$, so $\Rep(\fg,\phi)\simeq\Ver_p(G)$. Finally, since every simple generates $\fg$ and $\fg$ generates $\Rep(\fg,\phi)$, $\Ver_p(G)$ has no non-trivial proper tensor subcategories.
\end{proof}

\begin{proof}[Proof of Theorem~\ref{ThmVerDecomp}]
The relation between invertibles and cosets in Proposition~\ref{PropInvertibles} shows that $\Ver_p^+(G)$ contains no invertibles except $\unit$. Since the summands in $\Rep G$ of $L(\lambda)\otimes L(\mu)$ have highest weights in $\lambda+\mu+\mZ\Phi$ for all $\lambda,\mu\in X(T)^+$, the correspondence $\lambda+\mZ\Phi\mapsto[L(\lambda)]$ between $X(T)/\mZ\Phi$ and the group of invertible objects in the Grothendieck ring is a group isomorphism. Thus we have an equivalence $\Ver_p^0(G)\simeq\sVec_{X(T)/\mZ\Phi,z}$ by Lemma~\ref{LemPointed} with $z$ the map in \ref{SecVerSetup}, and these categories are equivalent to $\Rep(Z,z)$ as described in \ref{SecZG}, proving (1). (2) and (3) are proven in Proposition~\ref{PropInvertibles} and Proposition~\ref{PropVerRep}, and (4) then follows from Lemma~\ref{LemDecomp}.
\end{proof}

\section{Categories generated by an object with finite powers}\label{SecMainThm}

Fix an algebraically closed field $\bk$ with characteristic $p>0$. We can define $\Ver_p(G)$ for reductive algebraic groups $G$ similarly to simple groups, and in particular we will be interested in $\Ver_p(\GL_n)$.

\subsection{Reducing Theorem~\ref{Thmmnp}(1) to a question on Lie algebras}

\begin{lemma}\label{LemSchurWeyl}
Let $\cC$ be a tensor category generated by $X\in\cC$ with $\mathbf N(X)=p$, and let $n=n(X)$. Then there are tensor functors
$$\Ver_p(\GL_n)\;\xrightarrow{H}\;\cC\;\xrightarrow{F}\;\Ver_p$$
sending the natural $n$-dimensional representation $V$ of $\GL_n$ to $X\in\cC$ and $L_{n-1}\in\Ver_p$ respectively.
\end{lemma}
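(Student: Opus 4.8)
The plan is to produce the two functors separately. For the functor $F:\cC\to\Ver_p$, I would use the universal property of $\cC$ as a category generated by $X$ with the prescribed vanishing and non-vanishing pattern of symmetric and exterior powers. Concretely, since $\mathbf N(X)=p$ with $n(X)=n$ and $m(X)=m=p-n$, one knows $\Lambda^nX$ and $\Sym^mX$ are invertible by Proposition~\ref{LemSymInv}, and by Lemma~\ref{LemEven} (applicable because $2\le n\le p-2$ and $2\le m\le p-2$) these invertibles are \emph{even}. The object $L_{n-1}\in\Ver_p$ has exactly the same numerics: $\Lambda^{n}L_{n-1}\cong\unit$ and $\Sym^{p-n}L_{n-1}\cong\unit$, with the next powers vanishing, and its symmetric/exterior algebras are as small as possible. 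The point is that $\cC$, being generated by such an $X$, should receive a functor \emph{from} the "universal" such category; since $\Ver_p(\GL_n)$ will play that role (via $H$), one really only needs $F\circ H$ to be the standard functor $\Ver_p(\GL_n)\to\Ver_p$ of Proposition~\ref{PropVerBasics}(3) applied to a principal $\SL_2\to\GL_n$ (or rather to $\SL_2\to\GL_n$ landing in a principal way), which sends $V\mapsto L_{n-1}$. So the cleanest route is: build $H$ first, then obtain $F$ as a consequence, or build both by hand from a single Schur--Weyl-type presentation.

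For the functor $H:\Ver_p(\GL_n)\to\cC$, I would invoke the universal property of $\Ver_p(\GL_n)$ (equivalently of $\Tilt\GL_n$ after semisimplification): for any object $Y$ in a semisimple — or suitably well-behaved — symmetric tensor category with $\dim Y = n$ and with $\Lambda^{n+1}Y=0$, there is an essentially unique tensor functor $\Ver_p(\GL_n)\to\cdot$ sending $V\mapsto Y$. Here one must first check $\dim X = n$ in $\bk$: this follows from Lemma~\ref{LemExtDims} since $\Lambda^nX\ne0$ forces $\binom{\dim X}{n}\ne 0$ and $\Lambda^{n+1}X=0$ forces $\binom{\dim X}{n+1}=0$, and (as in the proof of Lemma~\ref{LemEven}) the only value consistent with both when $2\le n\le p-2$ is $\dim X = n$. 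With $\dim X=n$ and $\Lambda^{n+1}X=0$, the antisymmetrizer $a_{n+1}$ kills $X^{\otimes n+1}$, which is exactly the relation cutting out $\GL_n$-invariant theory inside the free rigid symmetric monoidal category on one object; more precisely, the kernel of the canonical functor from that free category to $\Tilt\GL_n$ is generated by the vanishing of $\Lambda^{n+1}$ together with $\dim = n$ (this is the content of the presentations in e.g. the oriented Brauer/Temperley--Lieb style descriptions, and of the theory of $\Ver_p(\GL_n)$). Hence the free functor factors through $\Ver_p(\GL_n)$, giving $H$ with $H(V)\cong X$. One should note that $\cC$ need not be semisimple a priori (that is part of the conclusion of Theorem~\ref{Thmmnp}), so I would phrase this via tilting modules and the universal property of $\Tilt\GL_n$ mapping to \emph{any} tensor category, rather than via the semisimplified category; the functor then automatically factors through the semisimplification $\Ver_p(\GL_n)$ precisely because negligible tilting modules have zero categorical dimension and hence map to objects that are killed or become zero under a tensor functor into a category where — actually one needs to check this carefully, see below.

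\textbf{Main obstacle.} The delicate point is the factorisation through the \emph{semisimplification}: a tensor functor out of $\Tilt\GL_n$ into an arbitrary tensor category $\cC$ need not kill negligible objects, and indeed $\cC$ is not yet known to be semisimple. The resolution I would pursue: the negligible (= zero categorical dimension) tilting modules of $\GL_n$ relevant here are exactly those built from $L(\lambda)$ with $\lambda$ hitting the walls $\langle\lambda,\theta_s^\vee\rangle \ge p-h+1$; their images in $\cC$ are subquotients of tensor powers of $X$ whose "$\SL_2$-content" forces them to be negligible, and one can show directly that a simple summand $L(\lambda)$ of $X^{\otimes k}$ with $\lambda\notin A$ must vanish in $\cC$ by exploiting that $\Lambda^{\bullet}X$ and $\Sym^\bullet X$ are as small as possible — any such summand would contribute to $\Lambda^{n+1}$ or $\Sym^{m+1}$ or violate a dimension count. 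Alternatively, and more robustly, I would first establish (as the paper evidently does just after this lemma) that $\cC$ is generated by $X$ with these numerics \emph{implies} $\cC$ semisimple, perhaps circularly — so instead the honest plan is: define $H$ as the composite $\Ver_p(\GL_n)\xleftarrow{\text{ss}}\Tilt\GL_n\to\cC$ and check that every negligible tilting maps to $0$ in $\cC$ by an induction on tensor degree using the filtration results of Section~\ref{SecFiltObjects} (Theorem~\ref{ThmGradedAlgs}) to control which composition factors of $X^{\otimes k}$ can be nonzero. That induction — verifying that the "small powers" hypothesis propagates to kill all negligibles — is where the real work lies; everything else (the dimension computation $\dim X = n$, evenness of the top powers, and the existence of $F$ once $H$ is in hand by composing with the principal-$\SL_2$ functor $\Ver_p(\GL_n)\to\Ver_p$) is routine given the results already in the excerpt.
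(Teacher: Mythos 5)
Your overall architecture (Schur--Weyl/oriented Brauer presentation of $\Tilt\GL_n$, an additive monoidal functor $\Tilt\GL_n\to\cC$ sending $V\mapsto X$ from $\dim X=n$ and $\Lambda^{n+1}X=0$, then a factorisation through the semisimplification) matches the paper's, and your dimension computation $\dim X=n$ is correct. But the step you yourself flag as "where the real work lies" is left as an unexecuted induction, and it is precisely the content of the proof. The paper resolves it not by tracking composition factors of $X^{\otimes k}$ degree by degree, but by a single structural fact: for $n\le p-2$ the ideal of negligible morphisms in $\Tilt\GL_n$ is generated \emph{as a tensor ideal} by the identity of the one object $\Sym^{p-n+1}V$ (it is thick since non-negligible tiltings are simple; $\Sym^{p-n+1}V$ is the simple tilting on the far wall of the alcove; Pieri's formula puts every other far-wall simple inside $(\Sym^{p-n+1}V)\otimes V^{\otimes i}$; and translation functors show the far-wall simples generate all negligibles). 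Since $m(X)=p-n$ gives $\Sym^{p-n+1}X=0$ in $\cC$, the whole negligible ideal dies and the functor descends to $\Ver_p(\GL_n)$. Your heuristic that a negligible summand "would contribute to $\Lambda^{n+1}$ or $\Sym^{m+1}$ or violate a dimension count" is not a proof: a far-wall simple $L(\lambda)$ occurring in some high tensor power of $X$ is not literally a subquotient of $\Lambda^{n+1}X$ or $\Sym^{m+1}X$; one genuinely needs the tensor-ideal generation statement to transport the vanishing of $\Sym^{m+1}X$ to all negligibles.

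The second gap is the construction of $F:\cC\to\Ver_p$. You cannot obtain $F$ "by composing with the principal-$\SL_2$ functor $\Ver_p(\GL_n)\to\Ver_p$": that functor points out of $\Ver_p(\GL_n)$, while $H$ points into $\cC$, and a surjective tensor functor does not allow you to descend a second functor along it unless you verify the kernels match. The paper instead observes that, once $H$ exists and is surjective, $\cC$ is Frobenius exact (citing \cite[Theorem~8.2.1]{CEO3}) and hence, by the main theorem of \cite{CEO1}, admits a unique tensor functor to $\Ver_p$; that $F(X)\cong L_{n-1}$ then follows since $F\circ H$ must be the standard fibre functor. Without invoking these results (or an equivalent), your plan does not produce $F$ at all.
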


\begin{proof}
The oriented Brauer category $OB_n$ is the universal $\bk$-linear rigid symmetric monoidal category generated by an object $U$ with categorical dimension $n$, meaning there exists a $\bk$-linear symmetric monoidal functor $OB_n\to\cC$ sending $U$ to $X$, and a similar functor $OB_n\to\Rep\GL_n$ sending $U$ to the $n$-dimensional natural representation $V$. Since $n\leq p-2$, it is a consequence of Schur-Weyl duality that the latter functor is full, and the collection of morphisms sent to zero is exactly the tensor ideal of $OB_n$ generated by the $(n+1)$-th antisymmetriser $a_{n+1}$ of $U$ as defined in Lemma~\ref{LemExtDims}. This means that $\Tilt\GL_n$ (which is generated by $V$) is the universal $\bk$-linear Karoubian rigid symmetric monoidal category generated by an object $V$ with $\Lambda^{n+1}V=0$, and hence we have an additive symmetric monoidal functor $\Tilt\GL_n\to\cC$ sending $V$ to $X$.

In $\Tilt\GL_n$ for $n\leq p-2$, we claim that the ideal of negligible morphisms is generated by $\Sym^{p-n+1}V$. Since all non-negligible objects are simple, the tensor ideal is thick (meaning it is generated as a tensor ideal by identity morphisms), and we can focus on objects. The representation $\Sym^{p-n+1}V$ is a simple (tilting) representation with highest weight lying on the far wall of the fundamental alcove $A$. That all other such simple (tilting) representations appear as direct summands of $(\Sym^{p-n+1}V)\otimes V^{\otimes i}$ follows from character theory and the corresponding property in characteristic zero (which is a consequence of Pieri's formula). That this entire collection of simple tilting representations on the far wall generate the ideal of negligible objects follows from translation functors (see \cite[\S E1]{Jantzen}). Hence the above functor induces a tensor functor from the semisimplification $\Ver_p(\GL_n)$ to $\cC$. Then it follows for instance from \cite[Theorem~8.2.1]{CEO3} that $\cC$ is Frobenius exact, and hence by \cite{CEO1}, there exists a (unique) tensor functor $F:\cC\to\Ver_p$.
\end{proof}

\begin{lemma}\label{LemGLn}
For $n<p$, let $V$ be the natural $n$-dimensional representation of $\GL_n$ and $\varpi_1$ the highest weight of $V$ restricted to $\SL_n\subset\GL_n$. Let $V^+$ be the unique simple object in $\Ver_p(\PGL_n)$ whose highest weight is in the $\widehat W_p$ orbit of $\varpi_1$ (see Proposition~\ref{PropInvertibles}). We have an equivalence
$$\Ver_p(\GL_n)\simeq\Ver_p(\PGL_n)\boxtimes\sVec_{\mZ,z}$$
sending $V$ to $V^+\boxtimes V_0$, where $V_0$ is the generator of $\sVec_{\mZ,z}$, and $z$ is such that $V_0$ is even if $n$ is odd and odd if $n$ is even.
\end{lemma}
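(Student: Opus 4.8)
The plan is to decompose $\Ver_p(\GL_n)$ using Lemma~\ref{LemDecomp}, which requires identifying a pointed semisimple subcategory whose only common tensor generator with the ``core'' adjoint-type category is $\unit$. First I would recall that $\GL_n$ has central torus $\mG_m$ mapping onto $\GL_n/\SL_n\cong\mG_m$, and the determinant representation $\det = \Lambda^nV$ generates a copy of $\Rep(\mG_m)$-style data inside $\Tilt\GL_n$; in the semisimplification $\Ver_p(\GL_n)$ this produces an invertible object $L_0$ with $L_0 = \Lambda^nV$ (the image of $\det$), and the subcategory $\cC_0$ it tensor-generates is pointed semisimple with group of invertibles $\cong\mZ$ (powers of $\det$ never become isomorphic since their $\SL_n$-restrictions are trivial but their central characters are distinct, and the semisimplification does not collapse minuscule-type simples). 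I would then invoke Theorem~\ref{ThmVerDecomp}(3): $\Ver_p^+(\GL_n) := \Ver_p(\PGL_n)$ sits inside $\Ver_p(\GL_n)$ as the subcategory with only invertible object $\unit$, corresponding to weights in $\mZ\Phi\cap A$ for $\SL_n$. The hypothesis $n < p$ guarantees $h = n \le p-1 < p$, so Theorem~\ref{ThmVerDecomp} applies to $\SL_n$/$\PGL_n$.

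Next I would check the two non-trivial hypotheses of Lemma~\ref{LemDecomp} with $\cA = \Ver_p(\PGL_n)$ and $\cB = \cC_0$. That $\cA$ has $\unit$ as its only invertible is Theorem~\ref{ThmVerDecomp}(3) (equivalently Proposition~\ref{PropInvertibles}, since $\PGL_n$ is adjoint so there are no non-trivial cosets of the root lattice inside the $\PGL_n$ weight lattice — wait, one must be careful: the invertibles of $\Ver_p(\SL_n)$ correspond to $X(T)/\mZ\Phi \cong \mZ/n$, but those of $\Ver_p(\PGL_n) = \Ver_p^+(\SL_n)$ are trivial by definition of $\Ver_p^+$). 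That $\cB$ is pointed semisimple is immediate from its construction as the category generated by the invertible $\det$. For tensor-generation of $\Ver_p(\GL_n)$ by $\cA$ and $\cB$ together: every simple $L(\lambda)$ of $\Ver_p(\GL_n)$ has a $\GL_n$-highest weight $\lambda$ which, after twisting by a suitable power of $\det$, lies in the subspace of weights trivial on the centre, i.e. becomes a $\PGL_n$-weight in $A$; hence $L(\lambda) \cong L(\lambda') \otimes \det^{\otimes k}$ for some simple $L(\lambda') \in \Ver_p(\PGL_n)$ and $k \in \mZ$, so $\cA$ and $\cB$ generate everything. Then Lemma~\ref{LemDecomp} yields the equivalence $\Ver_p(\GL_n) \simeq \Ver_p(\PGL_n) \boxtimes \cC_0$, and by Lemma~\ref{LemPointed}, $\cC_0 \simeq \sVec_{\mZ,z}$ for the appropriate parity function $z$.

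It remains to track where $V$ goes and to compute the parity $z$. Under the decomposition, $V \cong V^+ \otimes L_0^{\otimes a}$ for some $a \in \mZ$ with $V^+ \in \Ver_p(\PGL_n)$; normalizing so that $V^+$ is exactly the simple whose highest weight is in the $\widehat W_p$-orbit of $\varpi_1$ forces the identification $V \mapsto V^+ \boxtimes V_0$ with $V_0$ the degree-$1$ generator (this is essentially a bookkeeping statement about which coset of $\mZ\Phi$ the weight $\varpi_1$ lies in: $\varpi_1$ is itself minuscule, generating $X(T)/\mZ\Phi \cong \mZ/n$, matching the $\mZ \to \mZ/n$ reduction). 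For the parity of $V_0$: by the remark after Lemma~\ref{LemExtDims}, an invertible $L$ is odd iff $\Lambda^2 L \ne 0$ is replaced by the criterion $\Sym$ vs.\ $\Lambda$ vanishing, equivalently (Lemma~\ref{LemInvDim}) $\dim L_0 = \pm 1$. Now $\dim L_0 = \dim \Lambda^n V$, and in $\Ver_p(\GL_n)$ with $\dim V = n$ (the categorical dimension of the natural representation is $n$ since $n < p$), Lemma~\ref{LemExtDims} gives $\dim \Lambda^n V = \binom{n}{n} = 1$. So $L_0$ is even — but that contradicts the claimed parity! The resolution: $V_0$ is \emph{not} $L_0 = \det$ but rather differs by how the generator of $\mZ$ is chosen relative to $V^+$; more precisely, since $V \mapsto V^+ \boxtimes V_0$ and $\dim V = n = \dim V^+ \cdot \dim V_0$, and since $V^+ = L(\varpi_1^{\PGL})$ has categorical dimension $\pm n$ with sign determined by the braiding on the corresponding $\SL_n$-object, one reads off $\dim V_0 = \pm 1$ accordingly; the sign is $+1$ (even) when $n$ is odd and $-1$ (odd) when $n$ is even, which one verifies by computing $c_{V,V}$ on the natural representation via the known braiding in $\Ver_p(\GL_n)$ (the symmetric group action), since $V^{\otimes n} \supseteq \Lambda^n V$ carries the sign representation and invertibility of $\Lambda^n V$ in the $\SL_n$-part determines the parity. \emph{The main obstacle} I anticipate is precisely this last sign computation — correctly matching the parity function $z$ on $\mZ$ with the braiding on $V$, keeping straight the distinction between the invertible $\det \in \Ver_p(\GL_n)$ (which is even) and the generator $V_0$ of the $\sVec_{\mZ,z}$ factor (whose parity alternates with $n$); this is the $\GL_n$ analogue of the $\mu_n$-versus-$\mZ/n$ bookkeeping already carried out for $\SL_n$ in Example~\ref{ExSLn}, and the cleanest route is likely to reduce to that example via the surjection $\mG_m \to \mu_n$ or by directly invoking the level-rank/determinant-twist relation $V \otimes \bar\unit^{?}$ there.
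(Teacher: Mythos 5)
There is a genuine gap, and it is exactly at the point where you yourself sensed trouble. Your pointed subcategory $\cC_0=\langle\det\rangle=\langle\Lambda^nV\rangle$ is the wrong one: the simple objects of $\Ver_p(\GL_n)$ lying in the subcategory tensor-generated by $\Ver_p(\PGL_n)$ and $\det$ are precisely those whose central character (the integer $\sum_i\lambda_i$) lies in $n\mZ$, since objects inflated from $\PGL_n$ have central character $0$ and $\det^{\otimes k}$ has central character $nk$. In particular $V$ itself (central character $1$) is \emph{not} in that subcategory for $n\ge 2$, so your claim that ``$L(\lambda)\cong L(\lambda')\otimes\det^{\otimes k}$'' and hence that $\cA$ and $\cB$ generate everything is false, and Lemma~\ref{LemDecomp} applied to this pair only decomposes a proper subcategory. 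The parity ``contradiction'' you then run into ($\det$ is even because $\dim\Lambda^nV=1$, yet $V_0$ must be odd for $n$ even) is a symptom of this, not a sign bookkeeping issue: since every power of an even invertible is even, \emph{no} choice of generator of $\langle\det\rangle$ can be odd, so the correct $\cC_0$ is genuinely a different subcategory, generated by an invertible $L'$ with $V\cong V^+\otimes L'$ (one has $L'^{\otimes n}\cong\det$ up to a twist, but $L'\notin\langle\det\rangle$). Your proposal never constructs $L'$, and your final dimension/braiding argument for the parity is circular (you would need to know $\dim V^+$ independently, which is equivalent to the parity you are trying to determine).

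The paper's route supplies exactly the two missing ingredients. First, restriction $\Tilt(\GL_n)\to\Tilt(\SL_n)$ is essentially surjective (both categories are generated by $V$ and $\End_{\GL_n}(V^{\otimes r})\cong\End_{\SL_n}(V^{\otimes r})$), giving a surjection $\Ver_p(\GL_n)\twoheadrightarrow\Ver_p(\SL_n)$ through which one lifts the invertible $L$ of Example~\ref{ExSLn} (the one with $V\cong V^+\otimes L$ in $\Ver_p(\SL_n)$) to the desired $L'\in\Ver_p(\GL_n)$; with $\cC_0=\langle L'\rangle$ the generation hypothesis of Lemma~\ref{LemDecomp} does hold. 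Second, $L'$ has infinite order because $\Ver_p(\GL_n)$ has infinitely many invertibles, giving $\cC_0\simeq\sVec_{\mZ,z}$, and the parity of $L'$ equals that of its image $L$, which Example~\ref{ExSLn} already records as even for $n$ odd and odd for $n$ even. Your closing suggestion to ``reduce to Example~\ref{ExSLn}'' is the right instinct, but as written the reduction is not carried out and the decomposition it is meant to feed into is set up incorrectly.
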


\begin{proof}
Restriction $\Tilt(\GL_n)\to\Tilt(\SL_n)$ is essentially surjective, since each category is generated by $V$ and we have an isomorphism $\End_{\GL_n}(V^{\otimes r})\cong\End_{\SL_n}(V^{\otimes r})$ for all $r\in\mN$. Thus we have functors
$$\Ver_p(\PGL_n)\hookrightarrow\Ver_p(\GL_n)\twoheadrightarrow\Ver_p(\SL_n)$$
which compose to give the inclusion $\Ver_p(\PGL_n)=\Ver_p^+(\SL_n)\hookrightarrow\Ver_p(\SL_n)$. If $L$ is the invertible object in $\Ver_p(\SL_n)$ satisfying $V\cong V^+\otimes L$, then by surjectivity there is an invertible object $L'\in\Ver_p(\GL_n)$ such that $V=V^+\otimes L'$. If $\cD$ is the subcategory generated by $L'$, then we have $\Ver_p(\GL_n)\simeq\Ver_p(\PGL_n)\boxtimes\cD$ by Lemma~\ref{LemDecomp} and Theorem~\ref{ThmVerDecomp}. Since $\Ver_p(\GL_n)$ has infinitely many invertible objects, $L'$ cannot have finite order and hence $\cD$ must be equivalent to $\sVec_{\mZ,z}$ with $z$ determined by the parity of $L'$. This must match $L$ which is even if $n$ is odd and odd if $n$ is even, as described in Example~\ref{ExSLn}.
\end{proof}

\begin{prop}\label{Propmnp}
Let $\cC$ be a tensor category generated by $X\in\cC$ with $\Lambda^nX\cong\unit\cong\Sym^{p-n}X$. Then $n$ is odd, and there is a Lie algebra $\fg$ in $\Ver_p^+$ which is a subalgebra of the Lie algebra $\mathfrak{sl}(L_{n-1})$ defined in Example~\ref{ExSLn} containing the summand $\phi:L_2\hookrightarrow\mathfrak{sl}(L_{n-1})$ such that $\cC\simeq\Rep(\fg,\phi)$.
\end{prop}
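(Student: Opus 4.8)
The plan is, first, to fix the parity of $n$ by a dimension count, and then to realise $\cC$ as representations over $\Ver_p^+$ of a closed subgroup of the fundamental group of $\Ver_p(\GL_n)$, eliminating the ``central torus'' at the end. I would begin by reducing to the case where $X$ is not invertible: an invertible object has $\{m(X),n(X)\}=\{1,\infty\}$, and then $\Lambda^nX\cong\unit$ forces $X\cong\unit$ and $n=1$, a degenerate case we exclude. Proposition~\ref{LemSymInv} then gives $n=n(X)$ and $p-n=m(X)$, and since $\Sym^1X=X\not\cong\unit$ we get $p-n\ge2$, so $2\le n\le p-2$ and $\mathbf{N}(X)=p$ (we are necessarily in characteristic $p\ge5$ here). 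Now Lemma~\ref{LemExtDims} applies, as $n+1<p$: writing $d=\dim X$, the vanishing $\Lambda^{n+1}X=0$ forces $d(d-1)\cdots(d-n)=0$ in $\bk$, while $\Lambda^nX\cong\unit\ne0$ gives $d(d-1)\cdots(d-n+1)\ne0$, so $d\equiv n\pmod p$. Hence $\dim\Sym^{p-n}X=\binom{\dim X+p-n-1}{p-n}=(-1)^{p-n}=(-1)^{n-1}$ in $\bk$ (using $\dim X=n$ and that $p$ is odd); as $\Sym^{p-n}X\cong\unit$ has dimension $1$ and $\charr(\bk)\ne2$, we conclude $n$ is odd. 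This is precisely what places $L_{n-1}$ in the subcategory $\Ver_p^+$ of $\Ver_p$, which the rest of the argument needs.

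Next I would invoke Lemma~\ref{LemSchurWeyl} for the tensor functors $\Ver_p(\GL_n)\xrightarrow{H}\cC\xrightarrow{F}\Ver_p$ with $V\mapsto X\mapsto L_{n-1}$. Since $X=H(V)$ generates $\cC$, $H$ is dominant; and since $L_{n-1}\in\Ver_p^+$ and $X$ generates $\cC$, $F$ factors as $\cC\xrightarrow{F^+}\Ver_p^+\hookrightarrow\Ver_p$. By Lemma~\ref{LemGLn} (with $n$ odd, so $\sVec_{\mZ,z}=\Vecc_\mZ$), together with Example~\ref{ExSLn} and Theorem~\ref{ThmVerDecomp}(2), one has $\Ver_p(\GL_n)\simeq\Ver_p(\PGL_n)\boxtimes\Vecc_\mZ\simeq\Rep_{\Ver_p^+}(\mathfrak{sl}(L_{n-1}),\phi)\boxtimes\Rep(\mG_m)$, so in the Harish-Chandra formalism of \cite[\S7.1]{Ve1} the fundamental group of $\Ver_p(\GL_n)$ relative to its functor to $\Ver_p^+$ is the pair $(\mG_m,\mathfrak{sl}(L_{n-1}))$ with $\mG_m$ acting trivially on $\mathfrak{sl}(L_{n-1})$. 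Exactly as in the proof of Proposition~\ref{PropVerRep}, $F^+$ identifies $\cC\simeq\Rep_{\Ver_p^+}(\pi,\phi)$ for its fundamental group $\pi$, say with Harish-Chandra pair $(\pi_0,\fg)$, and dominance of $H$ together with \cite[Theorem~4.2.1]{CEO3} gives a closed embedding $(\pi_0,\fg)\hookrightarrow(\mG_m,\mathfrak{sl}(L_{n-1}))$. In particular $\fg$ is a Lie subalgebra of $\mathfrak{sl}(L_{n-1})$, and the constraint $L_2\to\fg$, followed by this embedding, is the principal $\mathfrak{sl}_2$-inclusion of Example~\ref{ExSLn}, so $\phi(L_2)$ is a direct summand of $\fg$.

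The heart of the matter is then to prove $\pi_0=1$. Under the identifications above, $X$ is the restriction along $(\pi_0,\fg)\hookrightarrow(\mG_m,\mathfrak{sl}(L_{n-1}))$ of the generating object $V=V^+\boxtimes V_0$, on which $\mG_m$ acts through a faithful character coming from the $\Vecc_\mZ$-factor $V_0$; hence $\pi_0\subseteq\mG_m$ acts on $X$ through a character $\chi$, and therefore on $\Lambda^nX$ through $\chi^n$ and on $\Sym^{p-n}X$ through $\chi^{p-n}$. Since $\Lambda^nX\cong\unit\cong\Sym^{p-n}X$ in $\cC$, both $\chi^n$ and $\chi^{p-n}$ are trivial on $\pi_0$. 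As $n\ne0$ this rules out $\pi_0=\mG_m$, so $\pi_0=\mu_k$ for some $k\ge1$, and then $k\mid n$ and $k\mid p-n$, whence $k\mid\gcd(n,p-n)=\gcd(n,p)=1$ because $p$ is prime and $0<n<p$. Thus $\pi_0=1$ and $\cC\simeq\Rep_{\Ver_p^+}(\fg,\phi)$ with $\fg\subseteq\mathfrak{sl}(L_{n-1})$ containing $\phi(L_2)$ as a summand, as claimed. I expect the principal obstacle to be exactly this last step together with the setup of the relative Tannakian dictionary over $\Ver_p^+$: one must correctly recognise the central $\mG_m$ inside the fundamental group of $\Ver_p(\GL_n)$ and then use both hypotheses $\Lambda^nX\cong\unit$ and $\Sym^{p-n}X\cong\unit$ (rather than merely their invertibility) together with $\gcd(n,p)=1$ to kill it; invertibility alone would leave a possible residual $\mu_k$. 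As an alternative to the fundamental-group bookkeeping one can write $X=X^+\otimes X_0$ with $X^+=H(V^+\boxtimes\unit)$ and $X_0=H(\unit\boxtimes V_0)$ an even invertible, use $\Lambda^nV^+\cong\unit\cong\Sym^{p-n}V^+$ in $\Ver_p(\PGL_n)$ (cf.\ Proposition~\ref{PropInvertibles}) to obtain $X_0^{\otimes n}\cong\unit\cong X_0^{\otimes(p-n)}$ and hence $X_0\cong\unit$, and conclude that $\cC$ is generated by $X^+$, a dominant image of $\Ver_p(\PGL_n)\simeq\Rep_{\Ver_p^+}(\mathfrak{sl}(L_{n-1}),\phi)$.
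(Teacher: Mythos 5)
Your proposal is correct, and its endgame is the same as the paper's: produce the functors $\Ver_p(\GL_n)\xrightarrow{H}\cC\xrightarrow{F}\Ver_p$ of Lemma~\ref{LemSchurWeyl}, and classify the intermediate category $\cC$ via \cite[Theorem~4.2.1]{CEO3} as $\Rep_{\Ver_p^+}(\fg,\phi)$ for a Lie subalgebra $\fg\subseteq\mathfrak{sl}(L_{n-1})$ containing $\phi(L_2)$ — indeed the ``alternative'' you sketch in your final sentence is essentially verbatim the paper's proof. The differences lie in the two preliminary reductions. The paper derives everything from the single observation that $\Lambda^nX\cong\unit\cong\Sym^{p-n}X$ forces $X_0^{\otimes n}\cong\unit\cong X_0^{\otimes(p-n)}$, hence $X_0\cong\unit$ by coprimality of $n$ and $p-n$; this simultaneously kills the central factor (so $H$ and $F$ restrict to $\Ver_p^+(\SL_n)\to\cC\to\Ver_p^+$ and one never meets $\mG_m$) and shows $V_0$ is even, whence $n$ is odd by Lemma~\ref{LemGLn}. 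You instead (i) prove $n$ odd first by the self-contained categorical-dimension computation $\dim X=n$, $\dim\Sym^{p-n}X=\binom{p-1}{p-n}=(-1)^{p-n}=1$, an argument in the spirit of Lemma~\ref{LemEven} that does not appear in the paper, and (ii) carry the central $\mG_m$ through the Tannakian formalism and only eliminate the residual $\pi_0\subseteq\mG_m$ at the end, using the same coprimality $\gcd(n,p-n)=1$. Both routes consume exactly the same arithmetic input; the paper's is shorter because disposing of $X_0$ before invoking the Tannakian machinery avoids all bookkeeping with $\mG_m$, while yours has the merit of making the parity of $n$ a consequence of the hypotheses alone, independent of the $\GL_n/\PGL_n$ decomposition. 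One cosmetic caveat: in the Harish--Chandra formalism of \cite[\S 7.1]{Ve1} the pair attached to $\Ver_p(\GL_n)$ over $\Ver_p^+$ should carry $\mathrm{Lie}(\mG_m)$ in its Lie algebra (so $\mathfrak{gl}(L_{n-1})$ rather than $\mathfrak{sl}(L_{n-1})$, or one treats the $\Vecc_{\mZ}$ factor separately as the paper implicitly does); since $\unit$ and $\mathfrak{sl}(L_{n-1})$ share no simple constituents in the semisimple category $\Ver_p^+$, once $\pi_0=1$ this still yields $\fg\subseteq\mathfrak{sl}(L_{n-1})$, so the conclusion is unaffected.
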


\begin{proof}
Let $V,V^+,V_0$ be as in Lemma~\ref{LemGLn}, and let $X^+,X_0$ be the images in $\cC$ of $V^+,V_0$ respectively under the functor $H$ in Lemma~\ref{LemSchurWeyl}. Since $\Lambda^nX\cong\unit\cong\Sym^{p-n}X$, we have $X_0^{\otimes n}\cong\unit\cong X_0^{\otimes(p-n)}$, and then since $n$ and $p-n$ are coprime we must have $X_0\cong\unit$. This means $V_0$ is even and hence $n$ is odd. Then both $V$ and $V^+$ are sent to $L_{n-1}\in\Ver_p$, which is in $\Ver_p^+$, and hence $H$ and $F$ restrict to
$$\Ver_p^+(\SL_n)\xrightarrow{H}\cC\xrightarrow{F}\Ver_p^+.$$
We have $\Ver_p^+(\SL_n)\simeq\Rep(\mathfrak{sl}(L_{n-1}),\phi)$ by Theorem~\ref{ThmVerDecomp} and Example~\ref{ExSLn}, and then we can replace $\mathfrak{sl}(L_{n-1})$ with an affine group scheme using the methods in \cite[\S 7.1]{Ve1}. By virtue of the place of $\cC$ in the sequence of tensor functors above, it follows from the general theory of \cite[Theorem~4.2.1]{CEO3} that we can classify $\cC$ in terms of subgroup schemes, or equivalently Lie subalgebras of $\mathfrak{sl}(L_{n-1})$ containing $L_2$.
\end{proof}

\subsection{Subalgebras of $\mathfrak{sl}(L_{n-1})$}

In light of the previous section, we would like to determine all possible subalgebras of $\mathfrak{sl}(L_{n-1})$ and use these to classify the tensor categories in Proposition~\ref{Propmnp}. Since we have an isomorphism $\mathfrak{sl}(L_{n-1})\cong\mathfrak{sl}(L_{p-n-1})$, we may assume $n<p/2$ without loss of generality. Of course this means that we allow $n$ to be even.

\begin{lemma}\label{LemConditions} In $\Ver_p(\SL_2)$ with $p>2$, let $2\leq n<p/2$ and $1\leq i,j,k<n$. The restriction of the Lie bracket $\mathfrak{sl}(L_{n-1})\otimes\mathfrak{sl}(L_{n-1})\to\mathfrak{sl}(L_{n-1})$ to the summands $L_{2i}\otimes L_{2j}\to L_{2k}$ is non-zero if and only if all of the following hold:
\begin{enumerate}
\item $L_{2k}$ is a summand of $L_{2i}\otimes L_{2j}$ (that is $|i-j|\leq k\leq i+j$ and $i+j+k\leq p-2$);
\item $i+j+k$ is odd;
\item $S(n,i,j,k)\not\equiv0\mod p$, where
$$S(n,i,j,k)\coloneqq\sum_{t=\max\{i,j,k,i+j+k-n+1\}}^{\min\{i+j,i+k,j+k\}}(-1)^t{t+n\choose i+j+k+1}{i\choose t-j}{j\choose t-k}{k\choose t-i}$$
\end{enumerate}
Moreover, these conditions are invariant under permutations of $i,j,k$. The number $S(n,i,j,k)$ can alternately be written in terms of $\Delta$ functions and a $6j$-symbol (see \cite[\S A]{CS}),
$$S(n,i,j,k)=\frac{(-1)^{n-1}i!j!k!((i+j+k+1)!)^{-1}}{\Delta(i,j,k)\Delta(\frac{n-1}2,\frac{n-1}2,i)\Delta(\frac{n-1}2,\frac{n-1}2,j)\Delta(\frac{n-1}2,\frac{n-1}2,k)}\left\{\begin{matrix}i&j&k\\\frac{n-1}2&\frac{n-1}2&\frac{n-1}2\end{matrix}\right\}.$$
\end{lemma}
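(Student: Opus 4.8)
The plan is to compute the Lie bracket explicitly in the category $\Tilt(\SL_2)$ (or, equivalently, in the representation-theoretic model of $\Ver_p$ coming from $U_q(\mathfrak{sl}_2)$ at a root of unity / the Temperley--Lieb category) and then track what survives semisimplification. Recall from Example~\ref{ExSLn} that $\mathfrak{sl}(L_{n-1})$ sits inside $\mathfrak{gl}(L_{n-1}) = L_{n-1}\otimes L_{n-1}^\ast$, with bracket built from $\id - c$; since $L_{n-1}^\ast\cong L_{n-1}$ in $\Ver_p$, the underlying object decomposes as $\bigoplus_{i} L_{2i}$ for $0\le i\le n-1$ (Clebsch--Gordan, valid because $2(n-1)\le p-2$ by the hypothesis $n<p/2$), and $\mathfrak{sl}$ drops the $i=0$ summand. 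So the bracket restricted to $L_{2i}\otimes L_{2j}\to L_{2k}$ is, up to a nonzero scalar, the composition of the Clebsch--Gordan projection/inclusion through $L_{n-1}\otimes L_{n-1}$ with the antisymmetrization $\id - c$. First I would record the three obvious necessary conditions: (1) is just that $L_{2k}$ is a summand of $L_{2i}\otimes L_{2j}$, which in $\Ver_p$ is the "quantum Clebsch--Gordan" / admissibility condition $|i-j|\le k\le i+j$ \emph{and} $i+j+k\le p-2$ (the extra inequality being exactly the truncation in $\Ver_p$); (2) is the parity constraint forced by antisymmetry of the bracket together with the known symmetry of the $\SL_2$ Clebsch--Gordan coefficients under swapping the two factors — the relevant intertwiner $L_{2i}\otimes L_{2j}\to L_{2k}$ is symmetric or antisymmetric under $c$ according to the parity of $i+j-k$ (equivalently $i+j+k$), so the bracket can only be nonzero on the antisymmetric piece, giving $i+j+k$ odd.

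The substance is condition (3): once (1) and (2) hold, the structure constant of the bracket on $L_{2i}\otimes L_{2j}\to L_{2k}$ is a fixed rational number (an evaluation of $6j$-symbols / a ratio of products of binomials in $n$) which must be nonzero \emph{modulo $p$}. Concretely I would realize $L_{n-1}\otimes L_{n-1}^\ast \cong \mathfrak{gl}(L_{n-1})$ with its bracket, expand the triple composition $L_{2i}\otimes L_{2j}\hookrightarrow \mathfrak{gl}\otimes\mathfrak{gl}\xrightarrow{[\,,\,]}\mathfrak{gl}\twoheadrightarrow L_{2k}$ in terms of the standard $U_q(\mathfrak{sl}_2)$ Clebsch--Gordan bases, and use the fact that the composite of two Clebsch--Gordan maps through $\mathfrak{gl}(L_{n-1})$ is governed by a $6j$-symbol with all entries drawn from $\{i,j,k,\tfrac{n-1}{2},\tfrac{n-1}{2},\tfrac{n-1}{2}\}$. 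The closed form for the $\mathfrak{sl}_2$ $6j$-symbol (Racah's formula) then yields exactly the alternating sum $S(n,i,j,k)$ over $t$ with the stated summation range — the lower limit $\max\{i,j,k,i+j+k-n+1\}$ and upper limit $\min\{i+j,i+k,j+k\}$ are precisely the nonnegativity constraints on the factorials appearing in Racah's formula with these arguments, and the overall $\Delta$-factor normalization is what relates $S$ to the $6j$-symbol as displayed. The point is that all the $\Delta$ factors and the factorial prefactor are nonzero mod $p$ automatically (every argument is a genuine integer $\le p-2$ since $n<p/2$ and $i,j,k<n$, so no factorial in a denominator or numerator picks up a factor of $p$), so the bracket vanishes mod $p$ \emph{iff} $S(n,i,j,k)\equiv 0$.

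For the permutation invariance: conditions (1) and (2) are visibly symmetric in $i,j,k$, and for (3) I would invoke the tetrahedral symmetry of the $6j$-symbol (which here has three equal entries $\tfrac{n-1}{2}$, so the symmetric group on the remaining slots $\{i,j,k\}$ acts), combined with the fact that the bracket structure constant on $L_{2i}\otimes L_{2j}\to L_{2k}$ agrees up to a nonzero scalar with the one on $L_{2k}^\ast\otimes L_{2i}\to L_{2j}^\ast$ etc. by rigidity/cyclicity of the invariant trilinear form on $\mathfrak{sl}$. I would also sanity-check the second displayed formula for $S$ directly against Racah's formula as normalized in \cite{CS}. The main obstacle I anticipate is purely bookkeeping: getting the $q\to$ root-of-unity specialization and the precise normalization of Clebsch--Gordan/$6j$ data to line up so that the constant is \emph{exactly} $S(n,i,j,k)$ (not $S$ times some extra factorial that could secretly be divisible by $p$) — i.e. verifying carefully that every quantity other than $S$ itself in the relevant expression is a unit mod $p$ under the hypotheses $2\le n<p/2$, $1\le i,j,k<n$. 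Once that normalization is pinned down, conditions (1)--(3) and their permutation-invariance follow.
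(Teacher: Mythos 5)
Your plan is essentially the paper's proof: express the bracket component $L_{2i}\otimes L_{2j}\to L_{2k}$ as a composite of Clebsch--Gordan intertwiners through $\mathfrak{gl}(L_{n-1})=L_{n-1}\otimes L_{n-1}^\ast$, read off (1) from quantum admissibility, (2) from the sign $(-1)^{i+j-k}$ with which the braiding acts on the intertwiner, and (3) from the non-vanishing mod $p$ of the Racah/$6j$ structure constant after checking that every other factor (the $\Delta$'s and factorial prefactors, all built from integers $<p$ thanks to $n<p/2$) is a unit; the paper does exactly this, citing \cite{CS} for the explicit inclusion $\iota_k^{a,b}$, the braid relation $c\circ\iota_k^{a,b}=(-1)^{(a+b-k)/2}\iota_k^{b,a}$, and the scalar $\lambda$ whose $t$-sum is $S(n,i,j,k)$. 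One caveat: your parenthetical claim that one could ``equivalently'' work in the root-of-unity $U_q(\mathfrak{sl}_2)$ model is not right --- that category is braided but not symmetric and its structure constants are $q$-analogues rather than mod-$p$ reductions of the classical ones --- so the computation must be done, as you primarily propose and as the paper does, with classical characteristic-zero $\SL_2$ Clebsch--Gordan data transported to $\Ver_p$ via the isomorphisms $\Hom_{\SL_2}(V_k,V_a\otimes V_b)\cong\Hom_{\Ver_p}(L_k,L_a\otimes L_b)$ for $a+b+k\le 2p-4$.
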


\begin{proof}
Let $V_a$ be the simple of highest weight $a\in\mN$ in $\Rep\SL_2$ so that we have isomorphisms
$$\Hom_{\SL_2}(V_k,V_a\otimes V_b)\xrightarrow{\sim}\Hom_{\Ver_p}(L_k,L_a\otimes L_b)$$
for $a+b+k\leq2p-4$. We will use some known properties of summands of $V_a\otimes V_b$ in characteristic zero and examine their restriction to characteristic $p$, using the isomorphism above to extrapolate to $\Ver_p$. In characteristic zero we have an inclusion $V_k\hookrightarrow V_a\otimes V_b$ if and only if $a,b,k\in\mN$ satisfy $|a-b|\leq k<a+b$ and $\chi\coloneqq\frac{a+b-k}2\in\mN$. Such an inclusion $\iota_k^{a,b}$ is given in \cite[Equation~11.2]{CS} in terms of the basis $F^ie_a\in V_a$ where $e_a$ is a highest weight vector and $F$ is the weight-lowering element in $\mathfrak{sl}_2$:
$$\iota_k^{a,b}(e_k)=\sum_{r=0}^\chi(-1)^r\frac{{\chi\choose r}}{{\chi+k\choose a-r}}F^r e_a\otimes F^{\chi-r}e_b.$$
We have $L_k\subseteq L_a\otimes L_b$ in $\Ver_p$ if and only if the above conditions on $a,b,k$ hold and also $a+b+k\leq2p-4$; if this is the case, then the binomial coefficients in the above formula are rational numbers $\frac xy$ with $p$ not dividing $x,y$, so this also applies in characteristic $p$ by taking $\frac xy\mapsto xy^{-1}$ modulo $p$. Moreover, we see from the above formula that
$$c_{V_a,V_b}\circ\iota_k^{a,b}=(-1)^\chi\iota_k^{b,a}.$$

To determine when the restriction of the Lie bracket $\mathfrak{sl}(L_{n-1})\otimes\mathfrak{sl}(L_{n-1})\to\mathfrak{sl}(L_{n-1})$ to $L_{2i}\otimes L_{2j}\to L_{2k}$ is non-zero, it suffices to evaluate the morphism $\psi^{i,j}_{k,n}:V_{2k}\to V_{n-1}\otimes V_{n-1}$ given by the composition
$$V_{2k}\hookrightarrow V_{2i}\otimes V_{2j}\hookrightarrow V_{n-1}\otimes V_{n-1}^*\otimes V_{n-1}\otimes V_{n-1}^*\xrightarrow{[\cdot,\cdot]}V_{n-1}\otimes V_{n-1}^*\xrightarrow{\sim} V_{n-1}\otimes V_{n-1}$$
as a scalar multiple of $V_{2k}\hookrightarrow V_{n-1}\otimes V_{n-1}$, where all the inclusions are the morphisms $\iota$ above. We can write $\psi^{i,j}_{k,n}$ as the sum of the compositions
$$\begin{tikzcd}[row sep=0.5em, column sep=7em]
V_{2k} \arrow{r}{\iota_{2k}^{2i,2j}} & V_{2i}\otimes V_{2j} \arrow{r}{\iota_{2i}^{n-1,n-1}\otimes\iota_{2j}^{n-1,n-1}} 
& V_{n-1}^{\otimes4} \arrow{r}{\id\otimes\mathrm{eval}\otimes\id} & V_{n-1}\otimes V_{n-1},\\
V_{2k} \arrow{r}{-c_{V_{2i},V_{2j}}\circ\iota_{2k}^{2i,2j}} & V_{2j}\otimes V_{2i} \arrow{r}{\iota_{2j}^{n-1,n-1}\otimes\iota_{2i}^{n-1,n-1}} 
& V_{n-1}^{\otimes4} \arrow{r}{\id\otimes\mathrm{eval}\otimes\id} & V_{n-1}\otimes V_{n-1}.\\
\end{tikzcd}$$
where `eval' is the morphism $V_{n-1}\otimes V_{n-1}\xrightarrow{\sim} V_{n-1}^*\otimes V_{n-1}\to\bk$. The first composition is \cite[Equation~9.5]{CS} which evaluates to $\lambda\iota_{2k}^{n-1,n-1}$, where the scalar $\lambda$ is computed in \cite[Theorem~11.2]{CS}. The second composition is $(-1)^{i+j+k}\lambda\iota_{2k}^{n-1,n-1}$ by the braid formula above. So if $i,j,k$ are such that $L_{2k}\subseteq L_{2i}\otimes L_{2j}$ in $\Ver_p$, then $\psi^{i,j}_{k,n}=0$ in characteristic $p$ if and only if either $i+j+k$ is even or the rational number $\lambda$ above becomes zero when converted to characteristic $p$. Unpacking the expression for $\lambda$ from \cite{CS}, we have
\begin{align*}
\lambda&=\frac{(-1)^k(i+j-k)!(i-j+k)!(j-i+k)!(n-1-i)!i!^2(n-1-j)!j!^2}{(n+k)(i+j+k)!(n-1+i)!(n-1+j)!}\\
&\qquad\cdot\sum_t\frac{(-1)^{t+n-1}(t+n)!}{(t-i)!(t-j)!(t-k)!(t-i-j-k+n-1)!(i+j-t)!(i+k-t)!(j+k-t)!}
\end{align*}
with $t$ ranging over all integers such that the values in the denominator factorials are non-negative. Since we require $i+j+k<p$ this expression is comprised of integers or factorials of integers less than $p$ (excluding $(t+n)!$ which is not being divided by), and hence can be converted to characteristic $p$. Whether the result is congruent to zero modulo $p$ depends only on the sum in $t$, which is invariant under permutations of $i,j,k$ and can be rearranged to give $S(n,i,j,k)$ up to a non-zero scalar.
\end{proof}

\begin{prop}\label{PropSubalgebras}\samepage
For $2\leq n<p/2$, the subalgebras of $\mathfrak{sl}(L_{n-1})=L_2\oplus L_4\oplus\cdots\oplus L_{2n-2}$ are as follows:
\begin{enumerate}[label=(\alph*)]
\item $\mathfrak{sl}(L_{n-1})$ itself;
\item $L_2\oplus L_6\oplus \cdots$ (all terms of the form $L_{4r+2}$, $r\in\mN$);
\item $L_2$;
\item $L_2\oplus L_{10}$ for $n=7$ and $p\geq17$;
\item $L_2\oplus L_{2n-2}$ for $p=2n+1\geq7$ (distinct from the above cases when $p\geq11$);
\item $L_2\oplus L_{14}$ for $n=10$ and $p=23$.
\end{enumerate}
\end{prop}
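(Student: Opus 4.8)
The plan is to use Lemma~\ref{LemConditions} to turn the classification of subalgebras of $\mathfrak{sl}(L_{n-1})=\bigoplus_{r=1}^{n-1}L_{2r}$ into a purely combinatorial problem about subsets $S\subseteq\{1,2,\dots,n-1\}$ with $1\in S$ (the summand $L_2$ is forced since $\phi:L_2\hookrightarrow\mathfrak{sl}(L_{n-1})$ must be contained in any candidate, and also $L_2$ acts non-trivially on every $L_{2r}$, so $\mathfrak{sl}_2$-theory controls which summands can appear together). A subset $S$ corresponds to a subalgebra if and only if it is closed under the bracket, i.e.\ whenever $i,j\in S$ and $k\notin S$ with $1\le k\le n-1$, the bracket $L_{2i}\otimes L_{2j}\to L_{2k}$ vanishes; by Lemma~\ref{LemConditions} this happens precisely when for every such triple at least one of conditions (1)--(3) fails, the only subtle one being $S(n,i,j,k)\equiv 0\bmod p$. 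First I would record the elementary consequences: since $[L_2,L_{2r}]$ lands in $L_{2r}$ with $1+r+r=2r+1$ odd and the combinatorial sum easily seen to be non-zero mod $p$ (it is essentially the $\mathfrak{sl}_2$-action, non-zero in characteristic zero and involving only small factorials), every subalgebra is a genuine $\mathfrak{sl}_2$-submodule decomposition and must contain $L_2$; this gives (c) as the minimal case.

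Next I would analyse the parity constraint (condition (2)): $i+j+k$ odd. If $S$ contains only indices $r$ with $r$ odd (i.e.\ summands $L_{4r'+2}$), then for $i,j\in S$ any $k$ with non-vanishing bracket has $i+j+k$ odd, forcing $k$ odd as well, so $k$ is again of the form $4r'+2$-type --- this shows the set in (b) is at least closed under the parity obstruction, and one checks directly (using $[L_2,-]$ and $[L_6,L_6]\to L_2\oplus L_6\oplus L_{10}$ etc.) that it is a subalgebra whenever it fits inside $\{1,\dots,n-1\}$. Conversely, if $S$ contains some even index $r$ (summand $L_{4r'}$-type) beyond $L_2$, I would show $S$ is forced to be all of $\{1,\dots,n-1\}$ unless we are in one of the sporadic cases: bracketing $L_2$ with $L_{4r'}$-type elements and iterating, the parity obstruction no longer blocks anything, so the only escape is that $S(n,i,j,k)\equiv 0\bmod p$ for all the relevant "missing" $k$. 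This is where cases (d), (e), (f) come from, and it is the main obstacle: one must prove that the only $(n,p)$ for which the divisibility miracle $S(n,i,j,k)\equiv 0\bmod p$ occurs for a consistent closed proper subset $S$ are $n=7$ (giving $L_2\oplus L_{10}$, valid for $p\ge 17$), $p=2n+1$ (giving $L_2\oplus L_{2n-2}$, valid for $p\ge 7$), and $(n,p)=(10,23)$ (giving $L_2\oplus L_{14}$).

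For the sporadic analysis I would proceed in two layers. For the infinite family (e), $p=2n+1$: here condition (1) already does most of the work, since $i+j+k\le p-2=2n-3$ together with the triangle inequalities severely restricts triples, and I would verify directly that $\{1,n-1\}$ is closed --- the only potentially dangerous bracket is $L_{2n-2}\otimes L_{2n-2}\to L_{2k}$, and the constraint $2(n-1)+k\le p-2$ forces $k\le 1$, so only $L_2$ appears, which is in the set. For the genuinely finite exceptional cases (d) and (f), the argument must be a finite check: bound $p$ in terms of $n$ using the observation that having a proper closed $S\ne$ (b)-type requires $S(n,i,j,k)\equiv 0\bmod p$ with $S(n,i,j,k)$ a \emph{fixed nonzero integer} (computed in characteristic zero) for each offending triple, hence $p$ divides that integer; since there are finitely many triples for each $n$ and the integers grow controllably, one gets an explicit finite list of $(n,p)$ to examine by computer, matching the $6j$-symbol computation referenced from \cite{CS}. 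I expect the write-up to lean on a computational verification for this last step (as the paper's introduction explicitly says the Lie subalgebra classification is "solved computationally"), with the human-readable part being the reductions: $1\in S$, the parity dichotomy separating (b) from the rest, and the bound on $p$ reducing sporadics to a finite search. The final bookkeeping is checking the "distinctness" claims (e.g.\ $L_2\oplus L_{2n-2}$ coincides with (b) or (c) only for small $p$) and confirming the validity ranges $p\ge 17$ in (d) and $p\ge 13$/$p\ge 17$ thresholds, which is routine once the list is in hand.
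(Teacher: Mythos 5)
Your overall strategy matches the paper's: use Lemma~\ref{LemConditions} to reduce to a combinatorial question about which index sets are closed under the bracket, observe that $L_2$ lies in every non-zero subalgebra, that the parity condition makes the odd-index set (b) closed, and that proper subalgebras beyond (b) and (c) can only arise from congruences $S(n,i,j,k)\equiv 0\pmod p$. Your verification of case (e) (for $p=2n+1$ the bound $i+j+k\le p-2$ forces $[L_{2n-2},L_{2n-2}]\subseteq L_2$) is exactly the paper's argument. But the heart of the proposition is the claim that the congruences $S(n,i,j,k)\equiv 0$ conspire to produce a closed proper subset \emph{only} in cases (d), (e), (f), and here your plan has a genuine gap. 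You propose to bound $p$ by noting that $p$ must divide the fixed non-zero integer $S(n,i,j,k)$, "hence an explicit finite list of $(n,p)$". This works for each fixed $n$ (finitely many triples, finitely many prime divisors), but $n$ is unbounded, so you obtain an infinite union of finite lists, not a finite search space: a priori, for every $n$ there could be a fresh prime $p>2n$ dividing some $S(n,i,j,k)$ and yielding a new sporadic subalgebra. The paper closes this with the one genuinely non-routine idea you are missing: explicit polynomial identities in $n$ and $i$ among $P(n,i,i,3)$, $P(n,i,i,5)$, $P(n,i,i,7)$ (the normalized $S$-values) whose $\mathbb{Z}[n,i]$-linear combinations equal \emph{fixed} integers such as $-2k$, $504$, $12(2i-1)(2i+3)Q_1(i)$, and $8465600=2^6\cdot5^2\cdot11\cdot13\cdot37$; these force all the relevant congruences to be simultaneously unsolvable except for primes dividing those constants, reducing to a truly finite check. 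Without a uniform-in-$n$ elimination of this kind your "computer verification" step cannot terminate.

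A secondary structural problem: you attribute the sporadic cases (d), (e), (f) to the branch where $S$ contains an even index (a summand $L_{4r}$), but (d) $=\{1,5\}$ and (f) $=\{1,7\}$ consist entirely of odd indices, i.e.\ they are proper subalgebras sitting strictly between (c) and (b). Your dichotomy "$S\subseteq$ odd indices $\Rightarrow$ $S$ is (b)" versus "$S$ contains an even index $\Rightarrow$ $S$ is everything up to sporadics" therefore silently skips the sub-question of which proper subsets of the odd-index set are closed — which is precisely where (d) and (f) live (via $P(n,5,5,3)=12(n+7)(n-7)$ vanishing at $n=7$, and $Q_1(7)=2\cdot 23$ at $(n,p)=(10,23)$). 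In the paper this is organised differently and more robustly: one shows $L_4$ generates everything, $L_6$ generates all of (b), and then that each $L_{2i}$ with $i\ge 4$ generates $L_6$ except in the listed cases; you would need to restructure your case division along these lines for the argument to catch (d) and (f) rather than merely list them.
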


\begin{proof} Suppose $i,j,k$ satisfy conditions (1) and (2) in Lemma~\ref{LemConditions}, and assume without loss of generality that $i\geq j\geq k$. If $k=1$ then we must have $i=j$, meaning the Lie bracket $\mathfrak{sl}(L_{n-1})\otimes\mathfrak{sl}(L_{n-1})\to\mathfrak{sl}(L_{n-1})$ is zero when restricted to the summands $L_2\otimes L_{2i}\to L_{2j}$ for $i\neq j$. This implies that $L_2$ is always a subalgebra, and if $L_{2i}$ for some $i\geq2$ generates only $L_2$ then $L_2\oplus L_{2i}$ must also be a subalgebra.

Letting $d=i-j$ and $c=\frac{k-d+1}2$, we can write $S(n,i,j,k)$ as
$$S(n,i,j,k)=\frac{(-1)^i(n+i)!(i+c)!}{(i+j+k+1)!k!(n-1-i)!(j-c)!}P(n,i,j,k)$$
where $P(n,i,j,k)$ is as follows, denoting $(a)_b\coloneqq a(a-1)\cdots(a-b+1)$ and $(a)_0=1$:
\begin{align*}
P(n,i,j,k)&\coloneqq\frac1{(i+c)_c}\sum_{s=0}^{k-d}(-1)^s{k\choose s}{k\choose d+s}\frac{(i-d)_s(i-d)_{k-d-s}}{(i-d)_c}(n+i+s)_s(n-i-1)_{k-d-s}\\
&=\frac{(-1)^{i+n-1}j!k!^2(j-c)!\Delta(\frac{n-1}2,\frac{n-1}2,i)}{i!(i+c)!\Delta(i,j,k)\Delta(\frac{n-1}2,\frac{n-1}2,j)\Delta(\frac{n-1}2,\frac{n-1}2,k)}\left\{\begin{matrix}i&j&k\\\frac{n-1}2&\frac{n-1}2&\frac{n-1}2\end{matrix}\right\}.
\end{align*}
Since all terms in the expression for $S(n,i,j,k)$ besides $P$ are factorials of numbers strictly between 0 and $p$, we have $S(n,i,j,k)\equiv0$ if and only if $P(n,i,j,k)\equiv0$ mod $p$. For fixed $k,d$ the function $P(n,i,i-d,k)$ is a rational function in $n$ and $i$ which we compute explicitly in several cases:
\begin{enumerate}
\item $P(n,i,i-k+1,k)=-2k$ for all $k\geq1$;
\item $P(n,i,i,3)=4(3n^2-5i-5i^2+3)$;
\item $P(n,i,i-1,4)=8(3n^2-7i^2+15)$;
\item $P(n,i,i,5)=-4(120+225n^2+15n^4-266i-70n^2i-203i^2-70n^2i^2+126i^3+63i^4)$;
\item $P(n,i,i,7)=8(6300+16415n^2+2450n^4+35n^6-16110i-7875n^2i-315n^4i-10599i^2$ 
$-7182n^2i^2-315n^4i^2+10593i^3+1386n^2i^3+4224i^4+693n^2i^4-1287i^5-429i^6).$
\end{enumerate}

Since $p>2n>2k$, we have $P(n,i,i-k+1,k)\not\equiv0\mod p$ for all $k$ by (1). Thus the Lie bracket is non-zero on the following summands:
\begin{itemize}[leftmargin=0.6in]
\item[($k=1$)] $L_{2i}\otimes L_{2i}\to L_2$ for $i\geq1$, meaning any subalgebra of $\mathfrak{sl}_2$ must contain $L_2$.
\item[($k=2$)] $L_4\otimes L_{2(i-1)}\to L_{2i}$ for $i\geq3$, meaning $L_4$ generates $\mathfrak{sl}(L_{n-1})$.
\item[($k=3$)] $L_6\otimes L_{2(i-2)}\to L_{2i}$ for $i\geq5$, meaning $L_6$ generates all summands of the form $L_{4r+2}$.
\end{itemize}
We aim to show that every summand $L_{2i}$ for $i\geq4$ generates $L_6$ and hence all summands of the form $L_{4r+2}$, excluding the exceptional cases in the proposition. 

First we consider characteristic $p=2n+1$. For $i=n-1$ we have the subalgebra $L_2\oplus L_{2n-2}$ since the Lie bracket on $L_{p-3}\otimes L_{p-3}$ factors through $\Lambda^2 L_{p-3}\cong\Sym^2 L_1\cong L_2$. If instead $i<n-1$ (implying $n>2$ and thus $p>5$) then $P(\frac{p-1}2,i,i,3)=3p^2-5p-5(2i-1)(2i+3)$ is non-zero modulo $p$, meaning $L_{2i}\otimes L_{2i}\to L_6$ is non-zero and $L_{2i}$ generates $L_6$. Thus from here on we can assume $p\geq2n+3$ and hence $2i+3\leq p-2$ for all $i$, meaning $L_6\subseteq L_{2i}\otimes L_{2i}$ for $i\geq4$.

Next consider $L_{10}$, i.e. $i=5$. We have $P(n,5,5,3)=12(n+7)(n-7)$, so for $n=7$ the bracket morphism on $L_{10}\otimes L_{10}\to L_6$ is zero and we have the subalgebra $L_2\oplus L_{10}\subset\mathfrak{sl}(L_6)$. For all other $n>5$ we have $P(n,5,5,3)\not\equiv0$ (note that $n+7<p$), so the summand $L_{10}$ generates $L_6$ and hence all $L_{4r+2}$.

Next consider $L_8$, i.e. $i=4$. We have $P(n,4,4,3)=-88$ for $n=5$ and $200$ for $n=7$, both of which have no prime factor $p\geq2n+3$ meaning $L_8\otimes L_8\to L_6$ is non-zero. If instead $n=6$ or $n\geq8$ then $p$ is so large that $L_{10}\subseteq L_8\otimes L_8$, so in this case we consider $2P(n,5,4,4)-P(n,4,4,3)=504$ which has no prime factor $p\geq 2n+3>13$. This means at least one of $P(n,4,4,3),P(n,5,4,4)$ is non zero so $L_8$ generates either $L_6$ or $L_{10}$, and then since $n\neq 7$, $L_{10}$ generates $L_6$.

For $i>5$ we can assume $2i+5\leq p-2$ meaning $L_{10}\subseteq L_{2i}\otimes L_{2i}$, since otherwise we would have $p=2n+3=2i+5$ meaning $P(n,i,i,3)=-2p^2+22p-36\not\equiv0\mod p$. We can also assume $n>7$ to avoid the exceptional subalgebra $L_2\oplus L_{10}$ above, since the only case with $n=7$ is $i=6$ which has $P(7,6,6,3)=-240\not\equiv0\mod p$. We have
\begin{align*}
P(n,i,i,5)+5(n^2-3i^2-3i+14)P(n,i,i,3)&=12(2i-1)(2i+3)Q_1(i),\\
25P(n,i,i,5)+7(23n^2-45i^2-45i+163)P(n,i,i,3)&=36(2n-1)(2n+1)Q_2(n)
\end{align*}
where $Q_1(i)\coloneqq i^2+i-10$ and $Q_2(n)\coloneqq3n^2-47$. Thus if $Q_1(i)\not\equiv0$ or $Q_2(n)\not\equiv0\mod p$ then $L_{2i}$ generates $L_6$ in $\mathfrak{sl}(L_{n-1})$.

For $i=6$ we have $Q_1(6)=2^5\not\equiv0\mod p$, so $L_{12}$ generates $L_6$. For $L_{14}$, i.e. $i=7$, we have $Q_1(7)=2\cdot23$ which is non-zero unless $p=23$, and $Q_2(n)\equiv3(n-10)(n+10)\mod 23$ which is non-zero unless $n=10$. We in fact have a subalgebra $L_2\oplus L_{14}\subset\mathfrak{sl}(L_9)$ in characteristic 23, since $P(10,7,7,5)\equiv P(10,7,7,3)\equiv0\mod 23$ and $L_{18}\not\subseteq L_{14}\otimes L_{14}$ in $\Ver_{23}$. If $p\neq23$ or $n\neq 10$ then $L_{14}$ generates $L_6$.

Finally, we show that $L_{2i}$ for any $i\geq8$ generates at least one of $L_6$, $L_{10}$ or $L_{14}$. We can assume $2i+7\leq p-2$ meaning $L_{14}\subseteq L_{2i}\otimes L_{2i}$, since otherwise we would have $i=\frac{p-7}2$ meaning $4Q_1(i)=p^2-12p-5\not\equiv0\mod p$. We can also assume $n>10$ in characteristic 23 to avoid the exceptional subalgebra $L_2\oplus L_{14}$ above, since $Q_1(i)\not\equiv0\mod 23$ for $i=8,9$.
\begin{align*}
27P(n&,i,i,7)+Q_3(n,i)Q_1(i)+Q_4(n,i)Q_2(n)=8465600=2^6\cdot5^2\cdot11\cdot13\cdot37,\text{ where}\\
Q_3&\coloneqq648(1300+315n^2+105n^4-407i-231n^2i-264i^2-231n^2i^2+286i^3+143i^4),\\
Q_4&\coloneqq-280(1180-39n^2+9n^4).
\end{align*}
Thus to have $P(n,i,i,7)\equiv Q_1(i)\equiv Q_2(n)\equiv0\mod p$ we require $p=37$, but then $Q_2(n)\equiv3(n+18)(n-18)$ which has no solutions with $37\geq2n+3$, so $L_{2i}$ generates $L_6$. Taking unions of the subalgebras found produces no new subalgebras, and we are done.
\end{proof}

\subsection{Proof of Theorem~\ref{Thmmnp}} For each $n$ and each of the subalgebras $\fg'\subseteq\mathfrak{sl}(L_{n-1})$ in Proposition~\ref{PropSubalgebras}, we can use Section~\ref{SecWeylFactors} (and Proposition~\ref{PropTypeD} in case (e)) to find an algebraic group $G$ such that
\begin{enumerate}
\item $G$ has a representation whose image in $\Ver_p^+$ is $L_{n-1}$, and
\item the image $\fg$ of the Lie algebra of $G$ in $\Ver_p^+$ has the same underlying object as $\fg'$.
\end{enumerate}
Property (1) means that the category $\Ver_p(G)\simeq\Rep(\fg,\phi)$ as in Theorem~\ref{ThmVerDecomp} satisfies the conditions of Proposition~\ref{Propmnp}, so $\fg$ is isomorphic to some Lie subalgebra of $\mathfrak{sl}(L_{n-1})$, and then (2) means this subalgebra must be $\fg'$. Thus we have described the representation category of every Lie subalgebra of $\mathfrak{sl}(L_{n-1})$, and thus we obtain the following theorem as a result of Proposition~\ref{Propmnp}.

\begin{theorem}\label{Thmmnp1}\samepage
Let $\cC$ be a tensor category generated by $X\in\cC$ with $\mathbf{N}(X)=p$ and suppose $\Lambda^nX\cong\unit\cong\Sym^{p-n}X$. Then $n$ is odd and we have one of the following possibilities:
\begin{enumerate}[label=(\alph*)]
\item There is an equivalence $\cC\simeq\Ver_p(\PGL_n)$ sending $X$ to $L(\lambda)$, where $\lambda$ is in the $\widehat W_p$-orbit of $\varpi_1$, the highest weight of the natural $n$-dimensional $\SL_n$-representation.
\item There is an equivalence $\cC\simeq\Ver_p(\SO_n)$ sending $X$ to $L(\varpi_1)$, the natural $n$-dimensional $\SO_n$-representation.
\item There is an equivalence $\cC\simeq\Ver_p^+$ sending $X$ to $L_{n-1}$.
\item $n=7$, $p\geq13$, and there is an equivalence $\cC\simeq\Ver_p(G_2)$ sending $X$ to $L(\varpi_1)$, the 7-dimensional simple representation.
\item $p\geq7$, either $p=2n+1$ or $p=2n-1$, and there is an equivalence $\C\simeq\Ver_p(\PSO_{p-1})$ sending $X$ to $L(\lambda)$, where $\lambda$ is in the $\widehat W_p$-orbit of the highest weight of a half-spin representation $\varpi_{\frac{p-1}2}$ or $\varpi_{\frac{p-3}2}$.
\item $n=13$, $p=23$, and there is an equivalence $\C\simeq\Ver_{23}^+(E_7)$ sending $X$ to $L(4\varpi_1)$. Note that $4\varpi_1$ is in the $\widehat W_p$-orbit of $\varpi_1$, the highest weight of the 56-dimensional simple representation.
\end{enumerate}
\end{theorem}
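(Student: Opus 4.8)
The plan is to combine Proposition~\ref{Propmnp} with the subalgebra classification of Proposition~\ref{PropSubalgebras} and then match each surviving subalgebra with the image of $\mathrm{Lie}(G)$ for one of the groups named in the statement. First, Proposition~\ref{Propmnp} gives that $n$ is odd and that $\cC\simeq\Rep(\fg,\phi)$ for a Lie subalgebra $\fg$ of $\mathfrak{sl}(L_{n-1})$ in $\Ver_p^+$ containing the principal summand $\phi\colon L_2\hookrightarrow\mathfrak{sl}(L_{n-1})$. Putting $m=\min(n,p-n)$, so $2\le m<p/2$, and using the Lie-algebra isomorphism $\mathfrak{sl}(L_{n-1})\cong\mathfrak{sl}(L_{m-1})$ of Example~\ref{ExSLn} (which matches up the two principal copies of $L_2$), I regard $\fg$ as a subalgebra of $\mathfrak{sl}(L_{m-1})$ containing $L_2$, and apply Proposition~\ref{PropSubalgebras}. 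This leaves exactly the six possibilities (a)--(f) for the underlying object of $\fg$, each with its constraints on $m$ and $p$.

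Next, for each of those six subalgebras $\fg'$ I would exhibit a reductive group $G$ with Coxeter number $h<p$ such that (i) the image of $\mathrm{Lie}(G)$ in $\Ver_p^+$ has underlying object $\fg'$, and (ii) $G$ has a representation whose image in $\Ver_p$ is $L_{n-1}$. The candidates are those in the statement: $\PGL_n$ in case~(a), applied to the natural representation (which restricts along the principal $\SL_2$ to $\Delta_{n-1}$), using the level--rank equivalence $\Ver_p^+(\SL_n)\simeq\Ver_p^+(\SL_{p-n})$ when $m=p-n$; the adjoint group $\SO_n$ of type~$B$ in case~(b), again via the natural representation; $\PGL_2$ in case~(c), so $\cC\simeq\Ver_p^+$; $G_2$ in case~(d), which forces $n=7$; the adjoint group $\PSO_{p-1}$ of type $D_{(p-1)/2}$ in case~(e), where $p=2m+1$ and one invokes Proposition~\ref{PropTypeD} for the half-spin representations (both half-spins having the same image, so the two ranges $p=2n+1$, $p=2n-1$ and the two half-spin choices collapse to $\PSO_{p-1}$, with $X$ a $\widehat W_p$-twist of a half-spin chosen to have image $L_{n-1}$); and $E_7^{\mathrm{ad}}$, i.e.\ $\Ver_{23}^+(E_7)$, in case~(f), where $n=13$, $p=23$, with $X$ in the $\widehat W_p$-orbit of $\varpi_1$. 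All the principal-$\SL_2$ images needed here are recorded in Section~\ref{SecWeylFactors}, Table~\ref{TabImages} and Proposition~\ref{PropTypeD}.

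Finally one closes the loop as in the paragraph preceding the statement: for each such $G$, Theorem~\ref{ThmVerDecomp} gives $\Ver_p(G)\simeq\Rep(\fg_G,\phi)$ with $\fg_G$ the image of $\mathrm{Lie}(G)$; property~(ii) makes $\Ver_p(G)$ generated by an object satisfying the hypotheses of Proposition~\ref{Propmnp}, so $\fg_G$ is isomorphic \emph{as a Lie algebra} to one of the six subalgebras of $\mathfrak{sl}(L_{m-1})$, and by~(i) it must be $\fg'$. Since the six underlying objects are pairwise distinct, the isomorphism type of $\fg$ determines $G$, whence $\cC\simeq\Rep(\fg,\phi)\simeq\Ver_p(G)$; chasing $X$ through the equivalences places it at the simple object with image $L_{n-1}$, i.e.\ the $L(\lambda)$ described in each case. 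The step I expect to be the main obstacle is (iii): promoting Proposition~\ref{PropSubalgebras}, which only sees underlying objects, to a statement about Lie-algebra structures, together with the accompanying bookkeeping --- the duality $n\leftrightarrow p-n$, pinning $X$ down inside the correct $\widehat W_p$-orbit, and especially case~(e), where one must track the two ranges $p=2n\pm1$ and the two half-spin representations down to $\Ver_p(\PSO_{p-1})$, along with the overlaps between cases (such as the coincidental equivalence $\Ver_{13}(G_2)\simeq\Ver_{13}(\PSO_{12})$).
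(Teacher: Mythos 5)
Your proposal is correct and follows essentially the same route as the paper: reduce via Proposition~\ref{Propmnp} to classifying Lie subalgebras of $\mathfrak{sl}(L_{n-1})$ containing $L_2$, invoke Proposition~\ref{PropSubalgebras} (after reducing to $m=\min(n,p-n)<p/2$ by level--rank duality), match each surviving underlying object to a group $G$ via Table~\ref{TabImages} and Proposition~\ref{PropTypeD}, and close the loop by noting that $\Ver_p(G)$ itself satisfies the hypotheses of Proposition~\ref{Propmnp}, so its Lie algebra must coincide with the unique subalgebra having that underlying object. The step you flag as the main obstacle is resolved exactly by this last observation, which you already state correctly.
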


As an additional result, we also complete a list of equivalences, see e.g. \cite[\S 4]{CEO2}.

\begin{prop}\label{CorEquivalences}\samepage
The following is a complete list of Verlinde categories of distinct adjoint-type simple algebraic groups that are equivalent.
\begin{enumerate}
\item $\Ver_p(G)\simeq\Ver_p(G')\simeq\Vecc$ when the Coxeter numbers of $G$ and $G'$ are both $p-1$.
\item $\Ver_p(\PGL_n)\simeq\Ver_p(\PGL_{p-n})$ for $n\geq2$ and $p\geq n+2$.
\item $\Ver_p(\Sp^\ad_{2n})\simeq\Ver_p(\SO_{p-2n})$ for $n\geq2$ and $p\geq 2n+3$.
\item $\Ver_p(\Sp^\ad_{p-3})\simeq\Ver_p(\SO_{p-2})\simeq\Ver_p(\PGL_{p-2})\simeq\Ver_p(\PGL_2)=\Ver_p^+$ for $p\geq7$.
\item $\Ver_{11}(G_2)\simeq\Ver_{11}(\Sp^\ad_8)\simeq\Ver_{11}(\SO_9)\simeq\Ver_{11}(\PGL_9)\simeq\Ver_{11}(\PGL_2)=\Ver_{11}^+$.
\item $\Ver_{13}(G_2)\simeq\Ver_{13}(\PSO_{12})$.
\item $\Ver_{17}(F_4)\simeq\Ver_{17}(\PSO_{16})$.
\item $\Ver_{19}(F_4)\simeq\Ver_{19}(G_2)$.
\end{enumerate}
\end{prop}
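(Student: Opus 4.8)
The approach is to recognise $\Ver_p(G)$, for $G$ adjoint-type simple with $h<p$, by the Lie algebra it encodes. By Theorem~\ref{ThmVerDecomp}(2) we have $\Ver_p(G)\simeq\Rep_{\Ver_p^+}(\fg,\phi)$, where $\fg$ is the image of $\mathrm{Lie}(G)=L(\theta_l)$ in $\Ver_p^+$ and $\phi\colon L_2\hookrightarrow\fg$ is the principal-$\mathfrak{sl}_2$ constraint; moreover the proof of Proposition~\ref{PropVerRep} identifies $(\fg,\phi)$ with the Harish--Chandra datum of the fundamental group $\pi(\Ver_p(G))$, which --- together with the functor $F\colon\Ver_p(G)\to\Ver_p$ of Proposition~\ref{PropVerBasics}(3), unique by \cite{CEO1} and hence preserved by any tensor equivalence --- is an invariant of the tensor category. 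In particular an equivalence $\Ver_p(G)\simeq\Ver_p(G')$ forces $\fg\cong\fg'$ as Lie algebras in $\Ver_p^+$, so the underlying objects agree, $U_G\cong U_{G'}$ in $\Ver_p^+$, where $U_G$ is read off from Table~\ref{TabImages}, Proposition~\ref{PropTypeD} and the cancellation rule of Section~\ref{SecWeylFactors}. The proof then has two halves: exhibiting the listed equivalences, and ruling out all others.

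\emph{The listed equivalences exist.} Item (1): if $h=p-1$ then $L(\theta_l)=0$ by Lemma~\ref{LemAdjointTilting}, and an adjoint group has no non-zero minuscule weight, so $\unit$ is the only simple object and $\Ver_p(G)\simeq\Vecc$; conversely $\Ver_p(G)\simeq\Vecc$ forces $L(\theta_l)=0$, i.e.\ $h=p-1$. Item (2) is the level--rank duality $\Ver_p^+(\SL_n)\simeq\Ver_p^+(\SL_{p-n})$ of Example~\ref{ExSLn}. For item (3) one checks from Section~\ref{SecWeylFactors} that the adjoint representations of $\Sp_{2n}$ and of $\SO_{p-2n}$ both have image $L_2\oplus L_6\oplus\dots\oplus L_{4s-2}$ with the \emph{same} $s=\min(n,\tfrac{p-1}{2}-n)$, so $U_{\Sp^\ad_{2n}}\cong U_{\SO_{p-2n}}$; since both Lie algebras sit inside a common $\mathfrak{sl}(L_{m-1})$ via Proposition~\ref{Propmnp} (after the identification $\mathfrak{sl}(L_{m-1})\cong\mathfrak{sl}(L_{p-m-1})$), and the subalgebras of Proposition~\ref{PropSubalgebras} are distinguished by their underlying objects, $\fg\cong\fg'$ and the categories agree. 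Items (4)--(5) are the cases $U_G=L_2$, which occur exactly for $\PGL_2,\PGL_{p-2},\SO_{p-2},\Sp^\ad_{p-3}$ and, at $p=11$, additionally for $G_2$ (there the factor $L_{10}$ of $L(\theta_l)$ becomes negligible); in all these cases $\fg=L_2$ forces $\Ver_p(G)\simeq\Ver_p^+$. For items (6)--(8) one computes $U_{G_2}=L_2\oplus L_{10}$ ($p\geq13$), $U_{\PSO_{12}}=L_2\oplus L_{10}$ ($p=13$), $U_{\PSO_{16}}=U_{F_4}=L_2\oplus L_{14}$ ($p=17$), and $U_{F_4}=L_2\oplus L_{10}$ ($p=19$), the last two via the pairings $T_{22}=[\Delta_{10}|\Delta_{22}]$ at $p=17$ and $T_{22}=[\Delta_{14}|\Delta_{22}]$ at $p=19$; one then transfers a special generator through $F$: if $G$ has a representation with $F$-image $L_{m-1}$ (here $m=7$ or $9$) satisfying $\Lambda^mX\cong\unit\cong\Sym^{p-m}X$, then faithfulness of $F$ shows its preimage in $\Ver_p(G)$ has the same invariants, so Theorem~\ref{Thmmnp1} applies to both categories and the matching $U_G$ selects the same item of that theorem.

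\emph{No other equivalences.} Suppose $G\neq G'$ are adjoint-type with $U_G\cong U_{G'}$. If $p\geq2h$ no Weyl composition factor of $L(\theta_l)$ is negligible, so by Kostant's description of $\mathrm{Lie}(G)$ over a principal $\mathfrak{sl}_2$ one has $U_G=\bigoplus_iL_{2m_i}$ with $m_i$ the exponents of $G$; the exponents determine the Cartan type except for the coincidence of types $B_r$ and $C_r$, and the adjoint group is unique in its type, so the only possibility is $\{G,G'\}=\{\SO_{2r+1},\Sp^\ad_{2r}\}$ with $r\geq3$. But the natural $(2r+1)$-dimensional representation of $\SO_{2r+1}$ lies in $\Ver_p(\SO_{2r+1})$ and has $F$-image $L_{2r}$, whereas $\Sp_{2r}$ has no $(2r+1)$-dimensional representation in the root lattice (the non-trivial such representations have dimension $\geq2r^2-r-1$), so no object of $\Ver_p(\Sp^\ad_{2r})=\Ver_p^+(\Sp_{2r})$ has $F$-image $L_{2r}$; since $F$ is faithful and intrinsic, this contradicts $\Ver_p(\SO_{2r+1})\simeq\Ver_p(\Sp^\ad_{2r})$. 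When $h<p<2h$ there are only finitely many primes for each Cartan type, and one checks type by type --- using the shortened $U_G$ of Section~\ref{SecWeylFactors} and Proposition~\ref{PropTypeD} --- that the only coincidences among the finitely many resulting objects $U_G$ are those giving items (4)--(8), again invoking the transfer-through-$F$ argument and Theorem~\ref{Thmmnp1} to upgrade $U_G\cong U_{G'}$ to $\fg\cong\fg'$ in those cases.

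I expect the main obstacle to be this completeness argument: it is finite but delicate, because the cancellation pattern of Section~\ref{SecWeylFactors} varies with $p$, so one must cleanly separate the generic range $p\geq2h$ (controlled by exponents) from the finitely many small primes; because $U_G\cong U_{G'}$ is strictly weaker than $\fg\cong\fg'$, so genuine non-equivalences with coinciding underlying objects --- above all the $B_r/C_r$ pair --- must be excluded using the finer invariant $F$; and because the sporadic equivalences at $p\in\{11,13,17,19\}$ must each be located and verified individually, which is precisely where Proposition~\ref{PropTypeD} and the $6j$-symbol computations behind Propositions~\ref{PropSubalgebras} and~\ref{Propmnp} do the essential work.
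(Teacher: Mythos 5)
Your overall strategy --- identify $\Ver_p(G)$ with $\Rep_{\Ver_p^+}(\fg,\phi)$, produce the listed equivalences by realising both Lie algebras as the unique subalgebra of a common $\mathfrak{sl}(L_{m-1})$ with the given underlying object (via Propositions~\ref{Propmnp}, \ref{PropSubalgebras} and \ref{PropVerRep}), and prove completeness by comparing the objects $U_G$ read off from Table~\ref{TabImages} --- is the paper's strategy, and the positive half of your argument is essentially the proof given there. (One small omission: for item (8) the $F_4$-representation with image $L_6$ at $p=19$ is $L(3\varpi_1)$, which is not in Table~\ref{TabImages} and has to be found computationally.)

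The gap is in your exclusion of the pair $\{\SO_{2n+1},\Sp^\ad_{2n}\}$. You claim that no object of $\Ver_p^+(\Sp_{2n})$ has $F$-image $L_{2n}$ because $\Sp_{2n}$ has no $(2n+1)$-dimensional representation in the root lattice. This inference is invalid: the $F$-image is computed after semisimplification, which deletes every negligible summand of the restriction to the principal $\SL_2$, so a representation of arbitrarily large dimension can have $F$-image $L_{2n}$. Item (3) of the very proposition you are proving supplies counterexamples to your inference pattern: under $\Ver_p(\Sp^\ad_{2n})\simeq\Ver_p(\SO_{p-2n})$ the natural $(p-2n)$-dimensional representation of $\SO_{p-2n}$ corresponds to a simple object of $\Ver_p(\Sp^\ad_{2n})$ with $F$-image $L_{p-2n-1}$, even though $\Sp_{2n}$ has no $(p-2n)$-dimensional representation in its root lattice; likewise the half-spin representations of Proposition~\ref{PropTypeD} have dimension $2^{r-1}$ but image $L_{r-1}$ or $L_r$. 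So the dimension count proves nothing, and the claim you need is essentially equivalent to the non-equivalence you are trying to establish. The paper closes this case differently: it first uses equivalence (3) to reduce to $p>4n$, where no Weyl factor of either adjoint representation is negligible, so that a Lie algebra isomorphism $\fg\cong\fg'$ in $\Ver_p^+$ would lift to an isomorphism $\mathfrak{so}_{2n+1}\cong\mathfrak{sp}_{2n}$ of the classical Lie algebras, which is absurd. You should replace your $F$-image argument with this (or some other genuine) comparison of the Lie algebra structures, since, as you yourself note, $U_G\cong U_{G'}$ is strictly weaker than $\fg\cong\fg'$.
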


\begin{proof}

Let $G,G'$ be adjoint-type groups and let $\fg,\fg'$ be the images of their Lie algebras in $\Ver_p^+$. Equivalence (1) is a consequence of Proposition~\ref{PropInvertibles}, and equivalences (4) and (5) are cases where $\fg=L_2$ and hence $\Ver_p(G)\simeq\Ver_p^+$. Using Table~\ref{TabImages}, it is straightforward to list all cases where $\fg$ and $\fg'$ have the same underlying object in $\Ver_p^+$. When this occurs, it suffices to find $2\leq n\leq p-3$ and representations $X,X'$ of $G,G'$ which both have image $L_{n-1}\in\Ver_p^+$. Indeed, no two distinct Lie subalgebras of $\mathfrak{sl}(L_{n-1})$ have the same underlying object by Proposition~\ref{PropSubalgebras}, so $\Ver_p(G)$ and $\Ver_p(G')$ must be equivalent by Proposition~\ref{PropVerRep}.

Equivalences (2), (3), (6) and (7) have objects $X$ given by ($\widehat W_p$-orbits of) the simples in Table~\ref{TabImages}. For equivalence (8), the object $X$ in $\Ver_{19}(F_4)$  is $L(3\varpi_1)$ which has image $L_6\in\Ver_p^+$, the same as $L(\varpi_1)\in\Ver_p(G_2)$. This can be verified computationally, e.g. using the LiE software \cite{LiE} and the method in Section~\ref{SecWeylFactors}. Finally, for $G=\SO_{2n+1},G'=\Sp_{2n}$ and $p\geq 2n+5$ we have $\fg\cong\fg'$ as objects in $\Ver_p^+$, but this is not a Lie algebra isomorphism. To show this, we can assume without loss of generality that $p>4n$ using equivalence (3), so that the adjoint representations of $G$ and $G'$ contain no negligible components when restricted to $\SL_2$. This means that any Lie algebra isomorphism $\fg\cong\fg'$ would lift to an isomorphism $\mathfrak{so}_{2n+1}\simeq\mathfrak{sp}_{2n}$ of the classical Lie algebras, a contradiction.
\end{proof}

\begin{remark}\label{RemOSp}
The Lie subalgebras (a), (b), (c) and (d) in Proposition~\ref{PropSubalgebras} also appear as subalgebras of $\mathfrak{sl}_n$ in characteristic zero (for example, (b) is the natural inclusion $\mathfrak{so}_n\hookrightarrow\mathfrak{sl}_n$). ases (e) and (f) on the other hand are specific to certain characteristics, and they are the Lie algebras $D_2^*$ and $E_2^*$ in \cite{CEO2}. The appearance of $E_2^*$ as a subalgebra of $\mathfrak{sl}(L_9)\cong\mathfrak{sl}(L_{12})$ is explained by Section~\ref{SecWeylFactors}. Alternatively, the action of a principal nilpotent on the 56-dimensional $E_7$-representation $L(\varpi_1)$ has Jordan blocks of size 23, 23 and 10 as found in \cite{St}, and hence $E_2^*$ (which is the image of the Lie algebra of $E_7$) acts on the image of the 10-dimensional $\SL_2$-representation $L_9$. We provide a conceptual explanation for the appearance of $D_2^*$ below, following the reasoning of \cite[\S 4.3]{CEO2}.

The orthosymplectic Lie superalgebra $\mathfrak{osp}(1|2)$ has even component $\mathfrak{sl}_2$ and odd component the natural 2-dimensional representation of $\mathfrak{sl}_2$. This means that for $p\geq5$, we can interpret $\mathfrak{osp}(1|2)$ as a Lie superalgebra object in $\Tilt\SL_2$ which has an image in $\Ver_p$ with even part $L_2$ and odd part $L_1$. Using the inclusion $\sVec\hookrightarrow\Ver_p$ and $L_1\otimes L_{p-2}\cong L_{p-3}$, this image can be interpreted as the (non-super) Lie algebra $D_2^*=L_2\oplus L_{p-3}$ with a $\mZ/2$-grading. Now, there is a representation of $\mathfrak{osp}(1|2)$ with components given by the simple objects in $\Tilt\SL_2$ with highest weights $\frac{p-3}2$ and $\frac{p-1}2$ respectively. By semisimplifying and tensoring the odd component by $L_{p-2}$, we obtain a graded representation of $D_2^*$ with underlying object $(L_{\frac{p-3}2})^{\oplus 2}$. The action of $L_{p-3}\subset D_2^*$ on this representation is described by a $2\times 2$-matrix with values in $\bk$, and the grading means this matrix must be antidiagonal and hence diagonalisable.  This means that, after forgetting the grading, the representation $(L_{\frac{p-3}2})^{\oplus 2}$ splits into 2 simple representations with underlying object $L_{\frac{p-3}2}$. These satisfy the condition $\mathbf{N}(L_{\frac{p-3}2})=p$ so that $D_2^*$ appears in Theorem~\ref{Thmmnp1}.

The connection between representations of $D_2^*$ and $\Ver_p(\PSO_{p-1})$ is a special case of a more general level-rank duality between $\mathfrak{osp}$ and $\mathfrak{so}$ in $\Ver_p^+$. Specifically, a subalgebra $\mathfrak{so}_{2n-1}\subset\mathfrak{so}_{2n}$ splits $\mathfrak{so}_{2n}$ into a $\mZ/2$-graded Lie algebra, and then we have an isomorphism
$$\mathfrak{so}_{2n}\cong\mathfrak{osp}(1|p-2n+1)\text{ in }\Ver_p^+$$
extending the type $B$ and $C$ level-rank duality $\mathfrak{so}_{2n-1}\cong\mathfrak{sp}_{p-2n+1}$ in Proposition~\ref{CorEquivalences}. We expect that this induces an equivalence
$$\Ver_p(\PO_{2n})\simeq\Ver_p(\PSO_{2n})^{\mZ/2}\simeq\Ver_p^+(\SOSp(1|p-2n+1)),$$
where the superscript $\mZ/2$ refers to an equivariantization in the sense of \cite{DGNO}. However, since tilting modules for supergroups are not yet well studied in the literature, we leave this without proof.
\end{remark}

The following theorem completes the proof of Theorem~\ref{Thmmnp}.

\begin{theorem}\label{Thmmnp3}
Let $\cC$ be a tensor category generated by $X\in\cC$ with $\mathbf{N}(X)=p$ and denote $n=n(X)$. Then $\cC\simeq\cC^+\boxtimes\cC_0$ where $\cC^+,\cC_0$ are generated by objects $X^+,X_0$ respectively such that $X\cong X^+\otimes X_0$, $X_0$ is invertible, and $\cC^+$ is one of the categories in Theorem~\ref{Thmmnp1}. Moreover,
\begin{enumerate}
\item if $n$ is odd then $\Lambda^n X^+\cong\unit\cong\Sym^{p-n}X^+$, $X_0$ is even, and  $\cC_0$ is equivalent to $\Vecc_{\Gamma}$ for some cyclic group $\Gamma$;
\item if $n$ is even then $\Lambda^{p-n} X^+\cong\unit\cong\Sym^nX^+$, $X_0$ is odd, and $\cC_0$ is equivalent to $\sVec_{\Gamma,z}$ for some cyclic group $\Gamma$ and $z:\Gamma\to\mZ/2$.
\end{enumerate}
\end{theorem}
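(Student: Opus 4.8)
The plan is to peel off the invertible tensor factor of $X$ using the functors already constructed, and then recognise the complementary factor via Theorem~\ref{Thmmnp1}. First, since $\mathbf{N}(X)=p$ is finite while every invertible object has $\mathbf{N}=\infty$, the object $X$ is non-invertible, so $m(X),n(X)\geq 2$; hence $2\leq n\leq p-2$ and $p\geq 5$. I would then invoke Lemma~\ref{LemSchurWeyl} to get a tensor functor $H\colon\Ver_p(\GL_n)\to\cC$ with $H(V)=X$, and Lemma~\ref{LemGLn} to write $\Ver_p(\GL_n)\simeq\Ver_p(\PGL_n)\boxtimes\sVec_{\mZ,z}$ under which $V\mapsto V^+\boxtimes V_0$, with $V_0$ even for $n$ odd and odd for $n$ even. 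Setting $X^+=H(V^+\boxtimes\unit)$ and $X_0=H(\unit\boxtimes V_0)$ gives $X\cong X^+\otimes X_0$ with $X_0$ invertible (the image of an invertible object), while $c_{X_0,X_0}=H(c_{V_0,V_0})$ shows $X_0$ is even when $n$ is odd and odd when $n$ is even. Let $\cC^+$ and $\cC_0$ be the tensor subcategories of $\cC$ generated by $X^+$ and $X_0$ respectively.

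The crux is to verify that $X^+$ meets the hypotheses of Theorem~\ref{Thmmnp1}. Inside $\Ver_p(\GL_n)$ the objects $\Lambda^nV$ and $\Sym^{p-n}V$ are invertible: restricting to $\SL_n$, the former becomes the trivial module (it is the determinant character) and the latter becomes $L((p-n)\varpi_1)$, which is invertible by Proposition~\ref{PropInvertibles} since $p-n=p-h$ for type $A_{n-1}$ and $\varpi_1$ is minuscule there. Because invertibility of an object of $\Ver_p(\GL_n)$ can be tested after restriction to $\SL_n$, and $\unit$ is the only invertible object of $\Ver_p(\PGL_n)$, the $\Ver_p(\PGL_n)$-component of each of $\Lambda^nV$, $\Sym^{p-n}V$ must be $\unit$. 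On the other hand, expanding $V=V^+\boxtimes V_0$ and using Lemma~\ref{LemExtDims} (valid as $n,p-n\leq p-2$), the powers $\Lambda^k$ and $\Sym^k$ are computed over a Deligne product by the evident (anti)symmetrizer, so they multiply componentwise when $V_0$ is even and swap roles when $V_0$ is odd; comparing $\Ver_p(\PGL_n)$-components then yields $\Lambda^nV^+\cong\unit\cong\Sym^{p-n}V^+$ when $n$ is odd, and $\Sym^nV^+\cong\unit\cong\Lambda^{p-n}V^+$ when $n$ is even. Applying the tensor functor $H$ gives the corresponding statements for $X^+$ in $\cC$ claimed in (1) and (2).

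Granting this, Theorem~\ref{Thmmnp1}, applied to $\cC^+$ with generator $X^+$ (taking $p-n$ as the odd parameter when $n$ is even), identifies $\cC^+$ with one of the listed categories; in particular $\cC^+$ is semisimple and has $\unit$ as its only invertible object. Meanwhile $\cC_0$ is pointed, being generated by an invertible object, and its objects are just finite direct sums of the invertibles $X_0^{\otimes c}$, so it is semisimple; by Lemma~\ref{LemPointed} it is $\sVec_{\Gamma,\phi}$ for the cyclic group $\Gamma$ generated by the class of $X_0$, with $\phi$ recording the parity of $X_0$ (trivial for $n$ odd, so $\cC_0\simeq\Vecc_\Gamma$; equal to the map $z$ for $n$ even). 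Finally $\cC$ is tensor-generated by $\cC^+$ and $\cC_0$ together because it is generated by $X\cong X^+\otimes X_0$, so Lemma~\ref{LemDecomp} produces the equivalence $\cC\simeq\cC^+\boxtimes\cC_0$ carrying $X^+\boxtimes X_0$ to $X$ (and hence also the semisimplicity of $\cC$); combining this with the parity statements completes (1) and (2).

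I expect essentially all of the work to be concentrated in the second paragraph: correctly tracking $\Lambda^nV$ and $\Sym^{p-n}V$ through the equivalence $\Ver_p(\GL_n)\simeq\Ver_p(\PGL_n)\boxtimes\sVec_{\mZ,z}$, and in particular pinning down that $\Sym^{p-n}V$ restricts on $\SL_n$ to the invertible ``far wall'' simple $L((p-n)\varpi_1)$. Once that identification is in place, the remainder is bookkeeping with Lemmas~\ref{LemSchurWeyl}, \ref{LemGLn}, \ref{LemDecomp}, \ref{LemPointed} and Theorem~\ref{Thmmnp1}.
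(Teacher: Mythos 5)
Your proposal is correct and follows essentially the same route as the paper: pull back the decomposition $V\cong V^+\boxtimes V_0$ of Lemma~\ref{LemGLn} through the functor $H$ of Lemma~\ref{LemSchurWeyl}, identify $\cC^+$ via Theorem~\ref{Thmmnp1} and $\cC_0$ via Lemma~\ref{LemPointed}, and conclude with Lemma~\ref{LemDecomp}. The paper's proof is terser and simply asserts that $\cC^+$ is one of the categories of Theorem~\ref{Thmmnp1}; your second paragraph (tracking $\Lambda^nV$ and $\Sym^{p-n}V$ through the Deligne factorisation to verify $\Lambda^nX^+\cong\unit\cong\Sym^{p-n}X^+$, with the swap for $n$ even) is a correct and welcome filling-in of exactly that implicit step.
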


\begin{proof}
Let $V^+$ and $V_0$ be as in Lemma~\ref{LemGLn}, and let $X^+$ and $X_0$ be their images under the functor $H$ in Lemma~\ref{LemSchurWeyl}. Then the subcategory $\cC^+$ generated by $X^+$ is one of the categories in Theorem~\ref{Thmmnp1}, and the subcategory $\cC_0$ generated by $X_0$ is as described in the theorem. The result now follows from Lemma~\ref{LemDecomp} and the properties of $\cC^+$ from Theorem~\ref{ThmVerDecomp}.
\end{proof}

\begin{remark} Theorem~\ref{Thmmnp3} provides further evidence towards \cite[Conjecture~4.1]{CEO2}, in that all semisimple symmetric tensor categories which appear in the classification are in the list conjectured to be complete {\it loc. cit.} Moreover, Theorem~\ref{Thmmnp3} and its proof answer \cite[Question~6.5]{EO} in the affirmative for $G=\SL_n$.
\end{remark}

\newpage

\section{Related results and conjectures}\label{SecExtra}
Let $\bk$ again be an algebraically closed field of characteristic $p>0$.

\subsection{Symmetric and exterior powers in $\Ver_{p^n}$}

\subsubsection{} Let $T_n$ be the indecomposable $\SL_2$ tilting module of weight $n$. For each pair $a,b\in\{p^{n-1}-1,\dots,p^n-2\}$ choose an isomorphism $\phi_{a,b}:T_a\otimes T_b\to\bigoplus_i T_i$ expressing $T_a\otimes T_b$ as a direct sum of indecomposable tilting modules. The category $\Ver_{p^n}$ can then be explicitly described by computing the associativity and braiding morphisms on the projective modules with respect to this basis, and evaluating tensor products of morphisms by converting to $\SL_2$ and splitting over the direct sums. For example, the braiding morphism $c_{a,b}:T_a\otimes T_b\to T_b\otimes T_a$ is given by $\phi_{b,a}c_{a,b}\phi_{a,b}^{-1}$ restricted to summands with weight at most $p^n-2$, written as a matrix with entries morphisms between projectives in $\Ver_{p^n}$.

In small cases this allows symmetric and exterior powers to be computed directly, although since this description is non-strict, care has to be taken with the associativity morphisms. To compute a braiding on two objects in a tensor power $X^{\otimes n}$ one can iteratively tensor $c_{X,X}$ on the right and left with identity morphisms while rearranging to parenthesise objects in the form $((X\otimes X)\otimes X)\otimes\dots$. This means tensoring on the right simply involves decomposing $X_i\otimes X$ for each summand $X_i$, while tensoring on the left requires the additional step of applying a sequence of associativity morphisms, for example:
\begin{align*}
&X((((XX)X)X)X)\xrightarrow{a_{X,((XX)X)X),X}}(X(((XX)X)X))X\xrightarrow{a_{X,(XX)X,X}\otimes\id_X}((X((XX)X))X)X\\
&\qquad\xrightarrow{a_{X,XX,X}\otimes\id_X^{\otimes2}}(((X(XX))X)X)X\xrightarrow{a_{X,X,X}\otimes\id_X^{\otimes3}}(((XX)X)X)X)X
\end{align*}
where $a_{X,Y,X}$ is the direct sum of $a_{X,X_i,X}$ over summands $X_i$ of $Y$. The categories $\Ver_{p^n}$ for $p^n=4,8,9$ are small enough that this method is computationally feasible. Below we give results of computations performed in Mathematica regarding symmetric and exterior algebras in these categories, and propose some questions based on these results.

\subsubsection{}\label{512} Let $L_i$ for $0\leq i<(p-1)p^{n-1}$ be the simples of $\Ver_{p^n}$ as in \cite[Theorem 4.42]{BEO}, so that $L_{jp}$ is the image of $L_j$ in $\Ver_{p^{n-1}}\hookrightarrow\Ver_{p^n}$.
\begin{itemize}\samepage
\item In $\Ver_4$:
\begin{itemize}[label={}]
	\item $m(L_1)=2$ and $n(L_1)=2$, with $\Sym^2L_2\cong\Lambda^2L_2\cong L_0$;
\end{itemize}
\item In $\Ver_8$:
\begin{itemize}[label={}]
	\item $m(L_1)=6$ and $n(L_1)=2$, with $\Sym^6L_1\cong\Lambda^2L_1\cong L_0$;
	\item $m(L_3)=4$ and $n(L_3)=4$, with $\Sym^4L_3\cong\Lambda^4L_3\cong L_0$.
\end{itemize}
\item In $\Ver_9$:
\begin{itemize}[label={}]
	\item $m(L_1)=7$ and $n(L_1)=2$, with $\Sym^7L_1\cong L_3$ and $\Lambda^2L_1\cong L_0$;
	\item $m(L_2)=6$ and $n(L_2)=3$, with $\Sym^6L_2\cong\Lambda^3L_2\cong L_0$;
	\item $m(L_4)=2$ and $n(L_4)=7$, with $\Sym^2L_4\cong\Lambda^7L_4\cong L_0$;
	\item $m(L_5)=3$ and $n(L_5)=6$, with $\Sym^3L_5\cong L_3$ and $\Lambda^6L_5\cong L_0$.
\end{itemize}
\end{itemize}
Moreover, for all of the above the divided and symmetric powers are isomorphic, as are the exterior powers and dual exterior powers.

\begin{conjecture}\label{ConjVerpn}
If $n\geq2$ and $L_i$ is a simple in $\Ver_{p^n}$ not in the image of $\Ver_{p^{n-1}}\hookrightarrow\Ver_{p^n}$, then $\mathbf{N}(L_i)=p^n$.
\end{conjecture}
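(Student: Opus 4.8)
We outline a possible strategy for Conjecture~\ref{ConjVerpn}; it splits into an upper and a lower bound on $\mathbf{N}(L_i)=m(L_i)+n(L_i)$. As a preliminary observation, since each inclusion $\Ver_{p^{m}}\hookrightarrow\Ver_{p^{n}}$ is a fully faithful tensor functor, the symmetric and exterior powers of an object of $\Ver_{p^{m}}$ are unchanged by this inclusion, so a simple already occurring in $\Ver_{p^{m}}$ retains the value of $\mathbf{N}$ it had there; thus the conjecture really asserts that passing to a genuinely new simple raises $\mathbf{N}$ exactly to the next power of $p$. A priori Lemma~\ref{LemDim} tells us only that $\mathbf{N}(L_i)\in\{p,2p,3p,\dots\}\cup\{\infty\}$ (and that it is a multiple of $p^{2}$ when $p\in\{2,3\}$), and for $n\geq 2$ a new simple is automatically non-invertible, since $\Ver_{p^{n}}$ acquires no new invertible objects.

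For the upper bound $\mathbf{N}(L_i)\leq p^{n}$ the plan is to compute $\Sym L_i$ and $\wedge L_i$ from the $\SL_2$-tilting description of $\Ver_{p^{n}}$ recalled at the start of this section. The model case is $L_i=L_1$, the image of the two-dimensional tilting module: one tracks which indecomposable tilting summands of its tensor powers survive semisimplification — using Lemma~\ref{LemCharacters} and the pattern of negligible tiltings — and expects $m(L_1)+n(L_1)=p^{n}$, with the split between the symmetric and exterior contributions depending on $p$ and on parities exactly as in the examples of~\ref{512}. For a general new simple one would express $L_i$, or a tensor power of it, through $L_1$ and objects of $\Ver_{p^{n-1}}$ and transport the estimate along, for instance via Theorem~\ref{ThmGradedAlgs} applied to the resulting exact sequences; the dimension formulas of Lemma~\ref{LemExtDims} furnish a first block of vanishings in degrees $<p$, but only modulo $p$, so they must be supplemented by the finer tilting bookkeeping.

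The lower bound $\mathbf{N}(L_i)\geq p^{n}$ is the main obstacle, and the reason the statement is only a conjecture: one must exclude $\mathbf{N}(L_i)=kp$ for every $1\leq k\leq p^{n-1}-1$. If $\mathbf{N}(L_i)$ is finite, then Proposition~\ref{LemSymInv} makes $\Sym^{m(L_i)}L_i$ and $\Lambda^{n(L_i)}L_i$ invertible, so the tensor subcategory generated by $L_i$ is one whose generator has $\mathbf{N}=kp$; for $k=1$ this subcategory is forced by Theorem~\ref{Thmmnp} to be semisimple, which is impossible since already $L_i^{\otimes 2}$ is non-semisimple for a new simple (e.g.\ $L_1^{\otimes 2}$ is a non-split self-extension of $\unit$ in $\Ver_4$). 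To carry this out for all $k$ one would need, first, a classification of the categories realising $\mathbf{N}(X)=kp$ for $k\geq 2$ in the spirit of Theorem~\ref{Thmmnp} (only the case $k=1$ is treated here), and, second, the fact that the non-semisimple ones among them (such as the $\Ver_{p^{m}}$ with $m<n$) embed in $\Ver_{p^{n}}$ only in the standard way, so that their image always consists of old simples. Even finiteness of $\mathbf{N}(L_i)$ is currently open — it is the restriction to $\Ver_{p^{n}}$ of the combination of the conjectures of \cite{BEO} and \cite{ComAlg} recalled in the introduction — so a complete proof would settle that too; in the meantime the realistic goals are the unconditional upper bound and further computational confirmation for small $p^{n}$.
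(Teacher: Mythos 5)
The statement you were asked about is a \emph{conjecture} in the paper: the authors give no proof, only the computational evidence of \ref{512} for $p^n\in\{4,8,9\}$ and the partial corroboration that Proposition~\ref{PropNX6} rules out $\mathbf{N}(X)=p$ when $p\in\{2,3\}$. You correctly recognise this, and what you submit is a strategy outline rather than a proof; on those terms it is reasonable and consistent with the tools the paper actually provides (the tilting-module bookkeeping of $\Ver_{p^n}$, Lemma~\ref{LemDim}, Proposition~\ref{LemSymInv}, and Theorem~\ref{Thmmnp} to exclude $\mathbf{N}=p$ for $p\ge5$). Since there is no proof in the paper to compare against, I can only assess the sketch itself, and it leaves both halves genuinely open, as you say: the upper bound is a computation you do not carry out, and the lower bound would need a classification of categories with $\mathbf{N}(X)=kp$ for $k\ge2$ that does not exist.

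One concrete slip: your justification for excluding $\mathbf{N}(L_i)=p$ asserts that ``already $L_i^{\otimes 2}$ is non-semisimple for a new simple.'' That is true in $\Ver_4$ but false in general; for instance $L_1^{\otimes 2}\cong L_0\oplus L_2$ is semisimple in $\Ver_{p^n}$ whenever $p\ge 5$. What you actually need is that the tensor subcategory generated by a new simple is non-semisimple, which follows from the classification of tensor subcategories of $\Ver_{p^n}$ in \cite{BEO} (they are exactly the $\Ver_{p^m}$ and $\Ver_{p^m}^+$ for $m\le n$, so a simple outside $\Ver_{p^{n-1}}$ generates $\Ver_{p^n}$ or $\Ver_{p^n}^+$, both non-semisimple for $n\ge2$), not from inspecting the second tensor power. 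With that substitution the $k=1$ exclusion is sound; the rest remains, as you acknowledge, conjectural.
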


\subsubsection{}For an object $X$ in a tensor category with $\mathbf{N}(X)<\infty$, we recall from \cite[Definition~5.3]{Et} that $\Sym X$ is called almost Koszul if the complexes formed by the morphisms
$$\Lambda^nX\otimes\Sym^m X\to\Lambda^{n-1}X\otimes\Sym^{m+1}X$$
are exact for all $m,n$ except for $m+n\in\{0,\mathbf{N}(X)\}$. We have verified computationally that all non-invertible simple objects in $\Ver_{p^n}$ for $p^n\in\{4,8,9\}$ have almost Koszul symmetric and exterior algebras, and $\Sym L_1$ for $L_1\in\Ver_{p^n}$ is shown to be almost Koszul for all $p,n$ in \cite[\S 6]{BE}. Moreover we have verified that $E_1^*$, the length 2 object in $\Ver_9$ with $\unit$ as a subobject and $L_4$ as a quotient (see \cite[Example 10.2.5]{CEO3}), has an almost Koszul symmetric algebra with $m(E_1^*)=4$ and $n(E_1^*)=5$.

\begin{question}\label{QueKoszul}
Are all the indecomposable objects in $\Ver_{p^n}$ Koszul or almost Koszul for all $p$ and $n$?
\end{question}

\subsection{Possible values of $\mathbf{N}(X)$}

Let $\cC$ be a tensor category over $\bk$.

\subsubsection{} Let $X\in \cC$ be non-zero in a tensor category $\cC$ over an algebraically closed field of characteristic $p$. By \cite[Lemma~5.4.3]{CE}, we have $\mathbf{N}(X)\ge 4$, and by Lemma~\ref{LemDim}, we know that $\mathbf{N}(X)$ is a multiple of $p$. For $p\geq5$ all multiples of $p$ can occur as values of $\mathbf{N}(X)$, for instance $\mathbf{N}(L^{\oplus k})=pk$ for all positive integers $k$ and any non-invertible simple $L$ in $\Ver_p$. For $p\in\{2,3\}$, we cannot have $\mathbf{N}(X)=p$, but all multiples of $p^2$ occur, by \ref{512}. Conversely, \cite[Conjecture~1.4]{BEO} and Conjecture~\ref{ConjVerpn} predict that $\mathbf{N}(X)$ must always be a multiple of $p^2$ for $p\in\{2,3\}$. Below we prove this, thus providing some evidence towards both conjectures. The main observation that we use is that, for $p\in\{2,3\}$, the Koszul complex in degree $p$ must be exact (contrary to the case $p>3$). We refer to \cite[\S 3.4]{EHO} for the definition of the Koszul complex.

\begin{prop}\label{PropKoszul}
Let $X\in\cC$ be an object with $\mathbf{N}(X)<\infty$, and $r\in\mN$. If the Koszul complex for $X$ is exact in degree $d$ for all $d\leq p^r$, that is
$$0\to \Lambda^d X\to X\otimes \Lambda^{d-1}X\to\cdots\to X\otimes\Sym^{d-1}X\to \Sym^dX\to 0$$
is exact for all $0<d\le p^{r}$, then $\mathbf{N}(X)$ is divisible by $p^{r+1}$.
\end{prop}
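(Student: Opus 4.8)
The plan is to induct on $r$. The base case $r=0$ is exactly Lemma~\ref{LemDim}: the hypothesis is then vacuous, as the degree-$1$ Koszul complex $0\to\Lambda^1X\to\Sym^1X\to0$ is always exact, and the assertion that $p\mid\mathbf{N}(X)$ is the content of that lemma. For the inductive step the engine will be the theory of $p$-adic dimensions of \cite{EHO}, which rests on the Frobenius functor of \cite{EO}. Since $\mathbf{N}(X)<\infty$, the object $X$ carries a $p$-adic dimension $\Dim(X)\in\mathbb{Z}_p$ whose reduction modulo $p$ is the categorical dimension $\dim(X)\in\bk$; moreover, by Proposition~\ref{LemSymInv} the extremal powers $\Sym^{m(X)}X$ and $\Lambda^{n(X)}X$ are invertible, so their categorical dimensions lie in $\{1,-1\}$ and are in particular units. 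I will deduce from the Koszul hypothesis the two congruences $n(X)\equiv\Dim(X)$ and $m(X)\equiv-\Dim(X)$ modulo $p^{r+1}$; adding them gives $\mathbf{N}(X)=m(X)+n(X)\equiv0\pmod{p^{r+1}}$, as required.

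To get the congruences, recall the binomial formulas $\dim\Lambda^nX=\binom{\dim X}{n}$ and $\dim\Sym^nX=\binom{\dim X+n-1}{n}$ of Lemma~\ref{LemExtDims}, valid for $n<p$, which already yield $n(X)\equiv\dim(X)\equiv-m(X)\pmod p$ --- this is the $r=0$ case. The mechanism for the higher congruences is that exactness of the Koszul complex in all degrees $\le p^r$ controls, via the $p$-adic dimension formalism (and the underlying Frobenius functor), the $p$-adic behaviour of these dimensions up to degree roughly $p^{r+1}$. Concretely, one should obtain that the $p$-adic dimensions of the vanishing objects $\Lambda^{n(X)+1}X$ and $\Sym^{m(X)+1}X$, namely $\binom{\Dim(X)}{n(X)+1}$ and $\binom{\Dim(X)+m(X)}{m(X)+1}$, are $\equiv0\pmod{p^{r+1}}$, whereas the $p$-adic dimensions $\binom{\Dim(X)}{n(X)}$ and $\binom{\Dim(X)+m(X)-1}{m(X)}$ of the invertible objects $\Lambda^{n(X)}X$ and $\Sym^{m(X)}X$ are units. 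Dividing out the unit factors, using the Pascal-type identities $\binom{a}{b+1}=\binom{a}{b}\frac{a-b}{b+1}$ and $\binom{a}{b+1}=\binom{a-1}{b}\frac{a}{b+1}$, then forces $\Dim(X)-n(X)\equiv0$ and $\Dim(X)+m(X)\equiv0$ modulo $p^{r+1}$, which are the desired congruences.

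The main obstacle is the propagation step, that is, extracting mod $p^{r+1}$ precision from exactness only up to degree $p^r$. The raw input --- applying the additive invariant $\dim(-)$ to the exact degree-$d$ complexes --- gives merely the Euler relations $\sum_i(-1)^i\dim(\Lambda^iX)\dim(\Sym^{d-i}X)=0$ for $d\le p^r$, which pin down the relevant dimensions only modulo $p$; the extra $p$-adic precision must come from the Frobenius functor, which relates the symmetric and exterior powers of $X$ in degrees divisible by $p$ to those of the Frobenius twist $\Fr(X)$, and I expect the cleanest implementation to interleave this with the outer induction, passing from $X$ to $\Fr(X)$ to lower $r$ by one. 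Two secondary points need care: the leading-term comparisons above tacitly assume $m(X),n(X)<p^{r+1}$, which must be arranged within the induction or circumvented; and when $p=2$ the symmetric and exterior powers are genuinely asymmetric (Remark~\ref{RemSymSplit}, Theorem~\ref{ThmGradedAlgs}), so the $\Sym$-side of the argument cannot be obtained from the $\Lambda$-side by duality and should instead be run through the divided powers $\Gamma^\bullet X$ and the dual exterior powers.
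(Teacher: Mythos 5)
Your proposal correctly locates the relevant tools (the $p$-adic dimensions of \cite{EHO} and the Euler characteristics of the degree-$d$ Koszul complexes), but the step you yourself flag as ``the main obstacle'' is precisely the content of the proposition, and your plan for it does not work. First, the congruences you aim for, $n(X)\equiv\Dim(X)$ and $m(X)\equiv-\Dim(X)\pmod{p^{r+1}}$, conflate the two distinct $p$-adic dimensions of \cite{EHO}: one has the \emph{exact} equalities $\Dim_+(X)=-m(X)$ and $\Dim_-(X)=n(X)$ in $\mathbb{Z}_p$ (because the top symmetric and exterior powers are invertible), and these two $p$-adic integers are in general different (e.g.\ $2-p$ versus $2$ for $L_1\in\Ver_p$). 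Asking that both be congruent to a single $\Dim(X)$ modulo $p^{r+1}$ is therefore equivalent to $\Dim_-(X)\equiv\Dim_+(X)\pmod{p^{r+1}}$, i.e.\ to $p^{r+1}\mid m(X)+n(X)$ --- the very statement to be proved --- and you offer no mechanism for it beyond an undeveloped appeal to the Frobenius functor and an induction whose inductive step is never carried out.

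Second, you dismiss the Euler relations $\sum_i(-1)^i\dim(\Lambda^iX)\dim(\Sym^{d-i}X)=0$ as giving ``only mod $p$'' information, but they are in fact all that is needed. Writing $m=m(X)$ and $n=n(X)$, the equalities $\Dim_+(X)=-m$ and $\Dim_-(X)=n$ already determine every dimension: $\sum_j\dim(\Sym^jX)\,t^j=(1-t)^{m}$ and $\sum_j\dim(\Lambda^jX)\,t^j=(1+t)^{n}$ in $\mF_p[t]$. Hence the Euler characteristic of the degree-$d$ Koszul complex is (up to sign) the coefficient of $t^d$ in $(1+t)^{m+n}$, namely $\binom{m+n}{d}$ reduced mod $p$. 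Exactness for all $0<d\le p^{r}$ --- in fact just for $d=p^{i}$ with $i\le r$ --- gives $\binom{m+n}{p^{i}}\equiv0\pmod p$ for $i\le r$, and by Lucas' theorem this says exactly that the base-$p$ digits of $m+n$ in positions $0,\dots,r$ all vanish, i.e.\ $p^{r+1}\mid m+n$. This elementary combinatorial point, not extra $p$-adic precision on the individual dimensions, is the missing engine; once it is in place no induction on $r$, no Frobenius twist, and no special treatment of $p=2$ is required.
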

\begin{proof}Set $m=m(X)$ and $n=n(X)$.
We use the theory of $p$-adic dimensions from \cite{EHO}. It follows from the basic expressions that $\Dim_+(X)=-m$ and $\Dim_-(X^\ast)=n$, so that all categorical dimensions of symmetric and exterior powers
$$s_j:=\dim(\Sym^j X)\quad\mbox{and}\quad a_j:=\dim(\Lambda^j X)$$ are determined by
$$(1-t)^{m}\;=\;\sum_j s_j t^j\quad\mbox{and}\quad (1+t)^n\;=\;\sum_{j}a_jt^j $$
in $\mF_p[t]$. It follows that the Euler characteristic of the categorical dimensions of the Koszul complex in degree $d$ is equal to the coefficient of $t^d$ in $(1+t)^{m+n}$. All these coefficients vanishing for $d\le p^r$ is equivalent to $p^{r+1}$ dividing $m+n$.
\end{proof}

As can be seen from the proof, it is in fact sufficient for the complex to be exact (or simply to have zero dimension Euler characteristic) in degrees $d=p^i$ for $i\le r$.

\begin{lemma}\label{LemExactSeq}
For any object $X$ in a tensor category $\cC$, the sequence
$$0\to\Lambda^3X\to X\otimes\Lambda^2X\to\Sym^{2}X\otimes X\to\Sym^3X\to 0$$
is exact. That is, the Koszul complex is always exact in degree 3.
\end{lemma}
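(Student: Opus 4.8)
The plan is to reduce the statement to a purely formal diagram chase, by realising all four terms and all three differentials of the degree-$3$ Koszul complex inside the single tensor power $X^{\otimes 3}$. Concretely, I would set $P=\Lambda^2X\otimes X$ and $Q=X\otimes\Lambda^2X$, viewed as subobjects of $X^{\otimes 3}$ (legitimate because $X\otimes-$ and $-\otimes X$ are exact, so they carry the monomorphism $\Lambda^2X\hookrightarrow X^{\otimes 2}$ to monomorphisms into $X^{\otimes 3}$). By the definitions in Section~\ref{SecFiniteP} one then has $\Sym^2X\otimes X=X^{\otimes 3}/P$, $\Sym^3X=X^{\otimes 3}/(P+Q)$ and $\Lambda^3X=P\cap Q$. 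Unwinding the definition of the Koszul differential from \cite[\S 3.4]{EHO} — which is built only from the comultiplication on $\Lambda^\bullet X$ and the multiplication on $\Sym^\bullet X$, and in particular carries no denominators — the complex in question is identified with
$$0\longrightarrow P\cap Q\;\xrightarrow{\ \alpha\ }\;Q\;\xrightarrow{\ \beta\ }\;X^{\otimes 3}/P\;\xrightarrow{\ \gamma\ }\;X^{\otimes 3}/(P+Q)\longrightarrow 0,$$
where $\alpha$ is the inclusion of subobjects, $\beta$ is the restriction to the subobject $Q$ of the quotient map $X^{\otimes 3}\twoheadrightarrow X^{\otimes 3}/P$, and $\gamma$ is the evident further quotient.

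With this description in hand, exactness at each of the four spots is immediate and uses no hypothesis on $\bk$. At $P\cap Q$ the map $\alpha$ is a monomorphism, and at $X^{\otimes 3}/(P+Q)$ the map $\gamma$ is an epimorphism, so those spots are exact. At $Q$: since $\beta$ is the restriction to the subobject $Q$ of an epimorphism with kernel $P$, one has $\ker\beta=P\cap Q=\im\alpha$. At $X^{\otimes 3}/P$: the image of $\beta$ is the image of $Q$ under $X^{\otimes 3}\twoheadrightarrow X^{\otimes 3}/P$, namely $(P+Q)/P$, and this is exactly $\ker\gamma$; hence $\im\beta=\ker\gamma$. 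Thus the sequence is exact.

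The only genuine content — and the step I would be most careful about — is the identification of the Koszul differentials with $\alpha,\beta,\gamma$: one must observe that the comultiplications $\Lambda^3X\to X\otimes\Lambda^2X$ and $\Lambda^2X\to X\otimes X$ (and the symmetric-side multiplications) are precisely the evident inclusions/quotients under the presentations of $\Lambda^\bullet X$ and $\Sym^\bullet X$ as sub- and quotient objects of tensor powers, which holds because the relevant shuffle/divided-power coefficients are integers with no division involved. Once this is pinned down there is no further obstacle: the essential point is the elementary observation that in degree $3$ every term $\Lambda^iX\otimes\Sym^jX$ has $\min(i,j)\le 1$, so it is literally a subobject or quotient of $X^{\otimes 3}$ and the three differentials respect these realisations — a feature that already fails in degree $4$ (where the middle term $\Lambda^2X\otimes\Sym^2X$ is only a subquotient of $X^{\otimes 4}$), which is why the higher Koszul complexes, and hence Proposition~\ref{PropKoszul} for larger $r$, are genuinely harder and do fail for suitable $X$ when $p\in\{2,3\}$ is small relative to $r$.
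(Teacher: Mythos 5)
Your proof is correct and is essentially the paper's own argument: the paper likewise realises all four terms inside $X^{\otimes 3}$ (computing the kernel of $\Sym^2X\otimes X\twoheadrightarrow\Sym^3X$ as $(X\otimes\Lambda^2X)/\bigl((X\otimes\Lambda^2X)\cap(\Lambda^2X\otimes X)\bigr)=(X\otimes\Lambda^2X)/\Lambda^3X$ via the second isomorphism theorem) and reads off exactness, exactly as in your $P$, $Q$ formulation. The only cosmetic difference is that the paper first proves right-exactness of $\Sym^{n-2}X\otimes\Lambda^2X\to\Sym^{n-1}X\otimes X\to\Sym^nX\to0$ for general $n\ge3$ before specialising to $n=3$.
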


\begin{proof}
First we prove that 
$$\Sym^{n-2}X\otimes\Lambda^2X\to\Sym^{n-1}X\otimes X\to\Sym^nX\to 0$$
is always exact for $n\ge 3$.

The kernel $K$ of the projection $\Sym^{n-1}X\otimes X\twoheadrightarrow\Sym^nX$ is given by the quotient of the subobjects of $X^{\otimes n}$ defining $\Sym^nX$ and $\Sym^{n-1}X\otimes X$, that is
\begin{align*}
K&=\frac{\sum_{i=1}^{n-1}X^{\otimes i-1}\otimes\Lambda^2X\otimes X^{\otimes n-i-1}}{\sum_{i=1}^{n-2}X^{\otimes i-1}\otimes\Lambda^2X\otimes X^{\otimes n-i-1}}\\
&=\frac{X^{\otimes n-2}\otimes\Lambda^2X}{\left(\sum_{i=1}^{n-2}X^{\otimes i-1}\otimes\Lambda^2X\otimes X^{\otimes n-i-1}\right)\cap\left(X^{\otimes n-2}\otimes\Lambda^2X\right)}\\
&=\frac{X^{\otimes n-2}\otimes\Lambda^2X}{\left(\sum_{i=1}^{n-3}X^{\otimes i-1}\otimes\Lambda^2X\otimes X^{\otimes n-i-3}\right)\otimes\Lambda^2X+\left(X^{\otimes n-3}\otimes\Lambda^3X\right)}\\
&=\frac{\Sym^{n-2}X\otimes\Lambda^2X}{\left(\Sym^{n-2}X\otimes\Lambda^2X\right)\cap\left(X^{\otimes n-3}\otimes\Lambda^3X\right)}.
\end{align*}
For $n=3$ we have $K=(X\otimes\Lambda^2X)/\Lambda^3X$ giving the exact sequence.
\end{proof}

\begin{prop}\label{PropNX6}
If $p\in\{2,3\}$, then $\mathbf N(X)$ is always a multiple of $p^2$.
\end{prop}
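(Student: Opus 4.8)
The plan is to deduce the statement directly from Proposition~\ref{PropKoszul} applied with $r=1$. That proposition reduces the claim to showing that, for $p\in\{2,3\}$, the Koszul complex for $X$ is exact in every degree $d$ with $0<d\le p$; granting this, we obtain $p^{2}=p^{r+1}\mid\mathbf{N}(X)$ as soon as $\mathbf{N}(X)$ is finite. So everything comes down to exactness of the Koszul complex in degrees at most $3$, uniformly over all tensor categories.

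This is checked degree by degree. In degree $d=1$ the complex is $0\to\Lambda^1X\xrightarrow{\ \id\ }\Sym^1X\to0$, which is exact. In degree $d=2$ it is the defining short exact sequence $0\to\Lambda^2X\to X^{\otimes2}\to\Sym^2X\to0$ from Section~\ref{SecFiniteP}, hence exact. In degree $d=3$ exactness is precisely Lemma~\ref{LemExactSeq}. Thus for $p=2$ we only need degrees $1,2$ and for $p=3$ only degrees $1,2,3$, so in both cases the hypotheses of Proposition~\ref{PropKoszul} are met with $r=1$, giving $4\mid\mathbf{N}(X)$ and $9\mid\mathbf{N}(X)$ respectively.

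Finally I would dispose of the degenerate cases left outside the scope of Proposition~\ref{PropKoszul}: if $X=0$ then $\mathbf{N}(X)=0$ is divisible by $p^{2}$, and if $\mathbf{N}(X)=\infty$ the claim holds under the same convention as in Lemma~\ref{LemDim}; otherwise $\mathbf{N}(X)<\infty$ and the previous paragraph applies. There is no genuine obstacle, since all the substance is already contained in Proposition~\ref{PropKoszul} (the $p$-adic dimension bookkeeping) and Lemma~\ref{LemExactSeq} (exactness of the degree-$3$ Koszul complex). The only point worth emphasising is that the argument is special to $p\in\{2,3\}$ precisely because for $p>3$ the degree-$p$ Koszul complex need not be exact, so Proposition~\ref{PropKoszul} cannot be run with $r\ge1$.
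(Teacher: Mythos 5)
Your proposal is correct and follows exactly the paper's route: apply Proposition~\ref{PropKoszul} with $r=1$, using the obvious exactness of the Koszul complex in degrees $1$ and $2$ and Lemma~\ref{LemExactSeq} for degree $3$. The extra remarks about the degenerate cases $X=0$ and $\mathbf{N}(X)=\infty$ are harmless and consistent with the conventions already in place.
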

\begin{proof}
The Koszul complex in degree $2$ is obviously exact, so the conclusion for $p=2$ follows from Proposition~\ref{PropKoszul}.
For $p=3$, the claim follows from the combination of Proposition~\ref{PropKoszul} and Lemma~\ref{LemExactSeq}.
\end{proof}

\subsection*{Acknowledgement}

K. C.'s work was partially supported by the ARC grants DP210100251 and FT220100125. P. E.'s work was partially supported by the NSF grant DMS - 2001318. J. N.'s work was supported by an Australian Government RTP Scholarship.


\begin{thebibliography}
	{EGNO}




\bibitem[BE1]{BE1}  D.~Benson, P.~Etingof: Symmetric tensor categories in characteristic 2. Adv. Math. 351 (2019), 967--999.


\bibitem[BE2]{BE} D.~Benson, P.~Etingof: On cohomology in symmetric tensor categories in prime characteristic. Homology Homotopy Appl. 24 (2022), no.2, 163--193.

\bibitem[BEO]{BEO} D.~Benson, P.~Etingof, V.~Ostrik: New incompressible symmetric tensor categories in positive characteristic. Duke Math. J. 172 (2023), no. 1, 105--200.

\bibitem[Bo]{Bo} N.~Bourbaki: Groupes et alg\'ebres de Lie, chap. 4, 5 et 6, Hermann, Paris, 1968; chap. 7 et 8, Hermann, Paris, 1975.

\bibitem[CE]{CE} K.~Coulembier, P.~Etingof: $N$-spherical functors and tensor categories. Int. Math. Res. Not. IMRN (2024), no. 14, 10615--10649.

\bibitem[CEO1]{CEO1} K.~Coulembier, P.~Etingof, V.~Ostrik: On Frobenius exact symmetric tensor categories. With an appendix by A.~Kleshchev. Ann. of Math. (2) 197 (2023), no. 3, 1235--1279.



\bibitem[CEO2]{CEO2}  K.~Coulembier, P.~Etingof, V.~Ostrik: Asymptotic properties of tensor powers in symmetric tensor categories. Pure Appl. Math. Q. 20 (2024), no. 3, 1141--1179.

\bibitem[CEO3]{CEO3} K.~Coulembier, P.~Etingof, V.~Ostrik: Incompressible tensor categories. Adv. Math. 457 (2024), Paper No. 109935.

\bibitem[Co1]{Co} K.~Coulembier: Tannakian categories in positive characteristic. Duke Math. J. 169 (2020), no. 16, 3167-–3219.


\bibitem[Co2]{ComAlg} K.~Coulembier: Commutative algebra in tensor categories. arXiv:2306.09727.


\bibitem[CS]{CS} L.~Cagliero, F.~Szechtman:
The classification of uniserial $\mathfrak{sl}(2)\ltimes V(m)$-modules and a new interpretation of the Racah-Wigner $6j$-symbol. 
J. Algebra 386 (2013), 142-175

\bibitem[De1]{Del01} P.~Deligne: Cat\'egories tannakiennes. Progr. Math. 87 (1990), 111–195

\bibitem[De2]{Del02} P.~Deligne: Cat\'egories tensorielles. Mosc. Math. J. 2 (2002), no. 2, 227--248.	

\bibitem[DGNO]{DGNO} V. Drinfeld, S. Gelaki, D. Nikshych, V. Ostrik: On braided fusion categories I. Selecta Math. 16 (2010), no. 1, 1–119

\bibitem[DKS]{DKS}  T.~Dyckerhoff, M.~Kapranov, V.~Schechtman: N-spherical functors and categorification of Euler's continuants. arXiv:2306.13350.


\bibitem[Et]{Et} P.~Etingof:
Koszul duality and the PBW theorem in symmetric tensor categories in positive characteristic.
Adv. Math.327(2018), 128--160.

\bibitem[EGNO]{EGNO}P.~Etingof, S.~Gelaki, D.~Nikshych, V.~Ostrik:
Tensor categories. 
Mathematical Surveys and Monographs, 205. American Mathematical Society, Providence, RI, 2015. 

\bibitem[EHO]{EHO}P.~Etingof, N.~Harman, V.~Ostrik:
$p$-adic dimensions in symmetric tensor categories in characteristic $p$.
Quantum Topol. 9 (2018), no. 1, 119–140.

\bibitem[EO]{EO} P.~Etingof, V.~Ostrik:
On semisimplification of tensor categories. Representation theory and algebraic geometry, 3--35.
Trends Math.
Birkhäuser/Springer, Cham, 2022.


\bibitem[Ga]{Ga} A.~Garnier: Equivariant triangulations of tori of compact Lie groups and hyperbolic extension to non-crystallographic Coxeter groups. J. Algebra, 635 (2023), 527--576

\bibitem[GK]{GK} S.~Gelfand, D.~Kazhdan: Examples of tensor categories. Invent. Math., 109, no. 3 (1992), 595-617

\bibitem[GM]{GM} G.~Georgiev, O.~Mathieu: Fusion rings for modular representations of Chevalley groups. Contemp. Math., 175 (1994), 89-100

\bibitem[Gr]{Gr} J.~Gruber: Linkage and translation for tensor products of representations of simple algebraic groups and quantum groups. arXiv:2304.07796.

\bibitem[Hu]{Hu}
J.E.~Humphreys: Introduction to Lie Algebras and Representation Theory. Springer, 1972


\bibitem[Ja]{Jantzen}
J.C.~Jantzen:
Representations of algebraic groups. 
Second edition. Mathematical Surveys and Monographs, 107. American Mathematical Society, Providence, RI, 2003.

\bibitem[Le]{LiE}
M. Leeuwen: LiE, a software package for Lie group computations. (1994).


\bibitem[Os]{Os} V.~Ostrik: On symmetric fusion categories in positive characteristic. Selecta Math. (N.S.) 26 (2020), no. 3, Paper No. 36, 19 pp.

\bibitem[Sa]{Sa} S.~Sawin: Jones–Witten Invariants for Nonsimply Connected Lie Groups and the Geometry of the Weyl Alcove. Advances in Mathematics 165(1) (2002), 1-34

\bibitem[St]{St} D.~Stewart: On the minimal modules for exceptional Lie algebras: Jordan blocks and stabilisers. LMS J. Comput. Math. 19 (2016) 235–258.

\bibitem[TW]{TW} D.~Tubbenhauer, P.~Wedrich: Quivers for $\SL_2$ tilting modules. Represent. Theory 25 (2021), 440-480

\bibitem[Ve1]{Ve1} S.~Venkatesh: Harish-Chandra Pairs in the Verlinde Category in Positive Characteristic, International Mathematics Research Notices 2023(18) (2023), 15475–15536

\bibitem[Ve2]{Ve2} S.~Venkatesh: Representations of General Linear Groups in the Verlinde Category. arXiv:2203.03158.

 \end{thebibliography}
\end{document}